\begin{document}
\title[A refinement of B\'ezout's Lemma]{A refinement of B\'ezout's Lemma, and order~3 elements in some quaternion algebras over~$\Q$}

\author[Donald I.~Cartwright, Xavier Roulleau]{Donald I.~Cartwright and Xavier Roulleau}

\addtolength{\baselineskip}{-0.5pt}
\numberwithin{equation}{section}
\newtheorem{theorem}{Theorem}[section]
\newtheorem{proposition}{Proposition}[section]
\newtheorem{lemma}{Lemma}[section]
\newtheorem{corollary}{Corollary}[section]
\theoremstyle{definition}
\newtheorem{definition}{Definition}
\theoremstyle{remark}
\newtheorem{remark}{Remark}[section]
\newtheorem{example}{Example}
\newcommand{\F}{{\mathbb F}}
\newcommand{\Q}{{\mathbb Q}}
\newcommand{\Z}{{\mathbb Z}}
\newcommand{\cG}{\mathcal{G}}
\newcommand{\cH}{\mathcal{H}}
\newcommand{\cL}{\mathcal{L}}
\newcommand{\cO}{\mathcal{O}}
\newcommand{\Trace}{\mathrm{Trace}}
\newcommand{\myj}{j}
\newcommand{\myNrd}{{\mathrm{Nr}}}
\newcommand{\myTr}{{\mathrm{Tr}}}
\begin{abstract}Given coprime positive integers $d',d''$, B\'ezout's Lemma
  tells us that there are integers $u,v$
  so that $d'u-d''v=1$. We show that, interchanging $d'$ and~$d''$ if necessary,
  we may choose $u$ and~$v$ to be {\it Loeschian numbers\/}, i.e., of the
  form $|\alpha|^2$, where $\alpha\in\Z[\myj]$, the ring of integers of the number field~$\Q(\myj)$,
  where $\myj^2+\myj+1=0$.  We do this by using {\it Atkin-Lehner
  elements\/} in some quaternion algebras~$\cH$. We use this fact to count the
  number of conjugacy classes of elements of order~3 in an order $\cO\subset\cH$.
\end{abstract}
\maketitle

\begin{section}{Introduction}\label{sec:preliminaries}The result in elementary number theory commonly called B\'ezout's Lemma seems
  to have been first formulated by Bachet in~1624. For coprime $d',d''\ge1$, if $d'u-d''v=1$, then replacing $(u,v)$ by $(u+d''m,v+d'm)$
  for large enough $m$, we may assume that $u,v\ge0$. Consider the field $\Q(\myj)$, where $\myj^2+\myj+1=0$,
  with non-trivial automorphism $x\mapsto\bar x$.
  The ring of integers of~$\Q(\myj)$ is $\mathfrak o=\Z[\myj]$. For reasons we explain below, we would like to choose the
  $u$ and~$v$ of B\'ezout's Lemma to be in the set $\cL$ of Loeschian integers, the integers of the form $x\bar x=a^2-ab+b^2$,
  where $x=a+b\myj\in\mathfrak o$.
  It is not always possible to choose $u,v\in\cL$. For example, if
  $(d',d'')=(5,3)$, then $d'u-d''v=1$ if and only if $u=2+3m$ and
  $v=3+5m$. As $a^2-ab+b^2=(a+b)^2-3ab$, any $u\in\cL$ is congruent to~0 or~1 (mod~3).
    So $2+3m\not\in\cL$
  for any integer $m\ge0$. For $(d',d'')=(3,5)$, however, $d'u-d''v=1$ holds for $u=7\in\cL$ and $v=4\in\cL$.
  In fact, $u,v$ can be always be chosen in~$\cL$ if we are allowed to interchange $d'$ and~$d''$.
  Equivalently, (a)~of the following holds.
  \begin{theorem}\label{thm:bezoutresult}Suppose that $d',d''$ are coprime positive integers. Then
    \begin{itemize}
    \item[(a)] there are $u,v\in\cL$ so that $d'u-d''v\in\{-1,1\}$,
    \item[(b)] if $d'd''\equiv2$ (mod~3), then the $u,v$ in~(a) can be chosen so that $3\nmid uv$,
    \item[(c)] if $3\nmid d'd''$, then the $u,v$ in~(a) can be chosen so that $d'u-d''v=1$.
      \end{itemize}
  \end{theorem}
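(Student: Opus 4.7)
The plan is to realize the B\'ezout equation as a reduced-norm equation in a quaternion algebra $\cH$ over $\Q$ containing $\Q(\myj)$. Writing $u=\alpha\bar\alpha$ and $v=\beta\bar\beta$ for $\alpha,\beta\in\mathfrak o$, the identity $d'u-d''v=\pm 1$ becomes a quaternary quadratic equation in $(\alpha,\beta)\in\mathfrak o^2$, and with an appropriate identification this is the reduced-norm equation for an element $\gamma=\alpha+\beta\omega$ of an order $\cO\subset\cH$. Here $\omega$ is an \emph{Atkin-Lehner element} satisfying $\omega x=\bar x\omega$ for all $x\in\Q(\myj)$ and with reduced norm dividing $d'd''$; the anti-commutation forces the reduced norm on $\mathfrak o\oplus\mathfrak o\omega$ to take the shape $\alpha\bar\alpha-\omega^2\beta\bar\beta$, and $\cH$ together with its scaling is chosen so that this becomes $d'u-d''v$.

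Granting this dictionary, part (a) asks for a unit in $\cO$ whose $\mathfrak o\omega$-component is nonzero. I would produce such units via the Hasse--Minkowski principle for indefinite quaternary forms: the norm form on $\cO$ is indefinite, so $\pm 1$ is integrally represented globally once it is $p$-adically represented at every prime. Away from $p\mid 6d'd''$ the form is unimodular and represents $1$ trivially; at primes $p\mid d'd''$ the explicit shape of the Atkin-Lehner element of reduced norm $p$ provides the local solution. The ``interchange $d'\leftrightarrow d''$ if necessary'' clause reflects the symmetry $\gamma\mapsto\omega^{-1}\gamma$, which swaps the roles of the two coefficients in the norm form.

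For (b) and (c) I would track the local behaviour at the prime $3$. When $3\nmid d'd''$, the algebra $\cH$ can be chosen unramified at $3$, so the $3$-adic norm form splits off a hyperbolic plane and the sign of $d'u-d''v$ can be forced to be $+1$, giving (c). When $d'd''\equiv 2\pmod 3$, the same $3$-adic analysis allows $\alpha,\beta$ to be chosen as units in $\mathfrak o\otimes\Z_3$, yielding $3\nmid uv$ and (b). The main obstacle, I expect, is the construction of the Atkin-Lehner element $\omega$ with precisely the required square: this is a principality statement for a specific two-sided ideal of $\cO$, and will presumably require a case-by-case analysis indexed by the residue classes of $d',d''$ modulo $3$.
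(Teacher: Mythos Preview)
Your high-level framework---recast $d'u-d''v=\pm 1$ as a reduced-norm problem in a quaternion order containing $\mathfrak o=\Z[\myj]$, and attack it by a local-global argument---is exactly the paper's strategy. But the proposal has a structural circularity and a gap in the local-global step.

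\textbf{Circularity in the setup.} You propose to work in an order of the shape $\mathfrak o\oplus\mathfrak o\omega$, with $\omega$ an Atkin--Lehner element whose square is arranged so that $\myNrd(\alpha+\beta\omega)=d'\alpha\bar\alpha-d''\beta\bar\beta$, and then to look for \emph{units}. No such order exists: the identity would have norm $d'\neq 1$. More seriously, the Atkin--Lehner element $w_{d'}$ is by definition $d'x+y\phi$ with $d'x\bar x-d''y\bar y=\pm1$; constructing it \emph{is} part~(a), so it cannot be presupposed. The paper instead fixes the order $\cO=\mathfrak o+\mathfrak o\phi$ with $\phi^2=d=d'd''$, and seeks $\xi\in\cO$ of reduced norm $\pm d'$ having the special shape $\xi=d'x+y\phi$. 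Equivalently one wants the quaternary form $Q(x,y)=d'|x|^2-d''|y|^2$ on~$\mathfrak o^2$ to represent~$\pm1$ over~$\Z$; but $Q$ is not the norm form of any order, so the problem is not literally ``find a unit''.

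\textbf{The local-global step.} Hasse--Minkowski yields only rational solutions; passing to integral ones requires a class-number-one or strong-approximation input, which you gesture at only in your final sentence. The paper makes this precise: for each prime~$p$ it builds a local element $w_p\in\cO_p$ with $w_p^2\in d'\cO_p^\times$ (this is the real local work, done separately for primes split and non-split in~$\Q(\myj)$), assembles the two-sided ideals $w_p\cO_p$ into a global two-sided $\cO$-ideal~$\Lambda$ via a local-global lattice lemma, and then invokes the theorem that every ideal of~$\cO$ is principal to write $\Lambda=\xi\cO$. A further local computation is still needed to check that the generator~$\xi$ actually has the shape $d'x+y\phi$; this does not follow from $\myNrd(\xi)=\pm d'$ alone.

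\textbf{Parts (b) and (c).} Your proposed $3$-adic refinement is plausible in spirit but is not how the paper argues. For~(c) the paper simply applies~(a) to the pair $(d',3d'')$ or $(3d',d'')$ and reads off the sign from a congruence mod~$3$, with one case requiring~(b). For~(b) the argument is global rather than local: if every Loeschian solution had $3\mid uv$, then left-multiplying $w_{d'}$ by an arbitrary $\alpha\in\cO^\times$ and recomputing the B\'ezout relation would force $\myNrd(\alpha)=+1$ for every unit~$\alpha$, contradicting a separately cited fact that $\cO^\times$ contains elements of norm~$-1$ when $3\nmid d$.
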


   As $u,v\equiv0$ or~1 (mod~3) for $u,v\in\cL$, if $d'd''\equiv1$ (mod~3), it is not possible for $d'u-d''v\in\{-1,1\}$ and $3\nmid uv$ to
    hold. We have been unable to prove the following:

  \smallskip\noindent{\bf Conjecture 1:} When $d'd''\equiv0$ (mod~3), the $u,v$ in Theorem~\ref{thm:bezoutresult}~(a) can be chosen so that $3\nmid uv$.

   \smallskip A prime number $p$ is in~$\cL$ if $p\equiv1$ (mod~3), as $p$ then splits in~$\Q(\myj)$
    (see \cite[V(1.1)]{frohlichtaylor}, for example), or if $p=3$, which ramifies in~$\Q(\myj)$. If $p\equiv2$ (mod~3) then $p\not\in\cL$, and $p$ is inert in~$\Q(\myj)$.
      As $uv\in\cL$ when $u,v\in\cL$, we see that $\cL$ contains all the numbers of the form
     $3^am_1m_2^2$,
  where $a\ge0$ and $m_2$ are integers, and where $m_1$ is a product of distinct prime numbers, all congruent to~1 (mod~3); 
  it is not hard to verify all $u\in\cL$ are of this form.

  Throughout this paper, $d\ge1$ is an integer, which will later be taken to be~$d'd''$,
  and $\cH$ is the quaternion algebra consisting of elements $\xi=x+y\phi$, where
  $x,y\in\Q(\myj)$, with $\phi^2=d$ and with $\phi x=\bar x\phi$
  for all $x\in\Q(\myj)$. There is an injective algebra homomorphism $\Psi:\cH\to M_2(\Q(\myj))$ mapping
    $\xi=x+y\phi$ to
    \begin{displaymath}
      \begin{pmatrix}
        x&y\\d\bar y&\bar x
        \end{pmatrix}.
    \end{displaymath}
    We shall identify $\cH$ with its image in~$M_2(\Q(\myj))$. The discriminant of $\cH$ equals $d_\cH$ if $ d_\cH\equiv1$ (mod~3) and
     $3d_\cH$ otherwise, where $d_\cH$ is the product of the primes $p$ dividing $d$ 
     to an odd power and such that $p\equiv 2$ (mod~3).
    
    The reduced norm $\myNrd(\xi)$ and reduced trace $\myTr(\xi)$ of $\xi=x+y\phi\in\cH$ are
    by definition $\det(\Psi(\xi))=x\bar x-dy\bar y$ and $\Trace(\Psi(\xi))=x+\bar x$, respectively.

    Writing $\xi=x_1+x_2\myj+x_3\phi+x_4\myj\phi$, with $x_1,x_2,x_3,x_4\in\Q$, we have
    \begin{displaymath}
      \myNrd(\gamma)=x_1^2-x_1x_2+x_2^2-d(x_3^2-x_3x_4+x_4^2)\quad\text{and}\quad\myTr(\gamma)=2x_1-x_2.
      \end{displaymath}
    
    The inverse of $\xi=x+y\phi\in\cH$ exists if and only if $\myNrd(\xi)\ne0$, in which case
    \begin{displaymath}
      \xi^{-1}=\frac{1}{\myNrd(\xi)}\bigl(\bar x-y\phi\bigr).
    \end{displaymath}
    Let $\cO$ denote the order of~$\cH$ consisting of elements $x+y\phi$ with $x,y\in\mathfrak o$ (its discriminant is $3d$).
    Let $\cO^\times$ denote the set of elements of~$\cO$ which are invertible, with inverse in~$\cO$. Then
    $\cO^\times=\{\xi\in\cO:\myNrd(\xi)=1\ \text{or}\ -1\}$.

   We obtain our refinement of B\'ezout's Lemma by constructing {\it Atkin-Lehner\/} elements in~$\cH$.
    We call a divisor $d'$ of~$d$ a {\it Hall divisor\/} of~$d$
    if $d'$ is coprime to $d''=d/d'$, and we shall then write $d'\|d$. Suppose that $u=x\bar x\in\cL$
    and $v=y\bar y\in\cL$ and $d'u-d''v=\epsilon\in\{-1,1\}$. Then we show in Lemma~\ref{lem:AtkinLehnerconstruction}
    that $w_d'=d'x+y\phi$ is an Atkin-Lehner element: it normalizes~$\cO$, and $w_{d'}^2=d'q$ for some $q\in\cO^\times$.

    We apply the above to the study of elements of~$\cO$ having order~3, i.e., satisfying  $\xi^3=1$ and $\xi\ne1$.
    Given two such elements $\xi,\eta$, call them ($\cO^\times$-){\it conjugate\/} if there is an
    $\alpha\in\cO^\times$ such that $\eta=\alpha\xi\alpha^{-1}$.

\begin{theorem}\label{thm:Cdcount}Let $d\ge1$ be an integer, and let $d=p_1^{m_1}\cdots p_r^{m_r}$
  be the factorization of $d$ into powers of distinct primes (so when $d=1$ we have
  $r=0$). Let $C_d$ denote the number of $\cO^\times$-conjugacy classes
  of elements of order~3. Then
  \begin{itemize}
  \item[(a)] if $p_i\ne3$ for each $i$, then $C_d=2^r$;
  \item[(b)] if some $p_i=3$ and $m_i\ge2$, then $C_d$ is either $2^r$ or~$3\times2^r$;
  \item[(c)] if some $p_i=3$ and $m_i=1$, let $\tilde d=d/3$.
    \begin{itemize}
    \item[(i)] if $\tilde d\equiv1$ (mod~3), then $C_d=2^r$,
    \item[(ii)] if $\tilde d\equiv2$ (mod~3), then $C_d=2^{r+1}$.
    \end{itemize}
  \end{itemize}
\end{theorem}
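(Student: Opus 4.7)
The plan is to parametrise the order-$3$ elements of $\cO$ explicitly, then use the Atkin--Lehner elements of Lemma~\ref{lem:AtkinLehnerconstruction} to produce a $(\Z/2)^r$-action on $\cO^\times$-conjugacy classes whose freeness I would verify, reducing $C_d$ to a small ``residual'' count.

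First note that $\xi\in\cO$ has order~$3$ iff $\xi^2+\xi+1=0$, equivalently $\myTr(\xi)=-1$ and $\myNrd(\xi)=1$. Writing $\xi=x+y\phi$ with $x=a+b\myj$, $y\in\mathfrak o$, the trace condition forces $b=2a+1$, so $x\bar x=3a^2+3a+1$, and the norm condition becomes
\[
  3a(a+1)=d\,y\bar y.
\]
Order-$3$ elements of $\cO$ are thus in bijection with pairs $(a,y)\in\Z\times\mathfrak o$ satisfying this Diophantine relation; such a pair exists iff $3a(a+1)/d\in\cL$, which already explains the appearance of Loeschian numbers and the special role of the prime~$3$.

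Next, for each Hall divisor $d'\|d$, Theorem~\ref{thm:bezoutresult}(a) supplies $u,v\in\cL$ with $d'u-d''v=\pm1$ (possibly after swapping $d'$ and $d''$), and hence an Atkin--Lehner element $w_{d'}=d'x_0+y_0\phi\in\cO$ with $x_0\bar x_0=u$, $y_0\bar y_0=v$, $\myNrd(w_{d'})=\pm d'$, normalising $\cO$ and with $w_{d'}^2\in d'\cO^\times$. Conjugation by $w_{d'}$ permutes the $\cO^\times$-conjugacy classes of order-$3$ elements, and modulo $\cO^\times\cdot\Q^\times$ the $w_{d'}$ generate a group $W\cong(\Z/2)^r$. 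I would verify that the $W$-action on classes is free: if $w_{d'}\xi w_{d'}^{-1}=\alpha\xi\alpha^{-1}$ for some $\alpha\in\cO^\times$, then $\alpha^{-1}w_{d'}$ centralises $\xi$ and so lies in $\Q(\xi)^\times=\Q(\myj)^\times$; hence $\pm d'=\myNrd(w_{d'})$ must be a norm from $\Q(\myj)$, forcing $d'\in\cL$. A further integrality analysis, using the explicit form $w_{d'}=d'x_0+y_0\phi$ together with the B\'ezout constraint $d'u-d''v=\pm1$, should then rule out $d'\neq1$, giving freeness and therefore $2^r\mid C_d$.

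It then remains to count $W\cdot\cO^\times$-orbits, a residual count $K=C_d/2^r\in\{1,2,3\}$. By Skolem--Noether every order-$3$ element of $\cH$ is $\cH^\times$-conjugate to $\myj$, so these orbits correspond to a double-coset space that I would enumerate via the $(a,y)$-parametrisation and the local structure of $\cO_p$ at each $p\mid3d$. When $3\nmid d$, the $3$-adic analysis is trivial and $K=1$ (case~(a)). When $3\|d$, writing $d=3\tilde d$ so that the norm equation reads $a(a+1)=\tilde d\,y\bar y$, reduction modulo~$3$ constrains $y\bar y$: if $\tilde d\equiv1\pmod3$ then $3\mid y\bar y$ pins down additional structure giving $K=1$ (case~(c)(i)), while if $\tilde d\equiv2\pmod3$ no such constraint holds and $K=2$ (case~(c)(ii)). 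When $9\mid d$, a finer $3$-adic analysis gives $K\in\{1,3\}$ (case~(b)). The main obstacle I anticipate is precisely this last subcase: $\cO_3$ is a non-maximal, non-Eichler order when $9\mid d$, and distinguishing $K=1$ from $K=3$ requires a Hensel-type lifting analysis of local embeddings of $\mathfrak o_3$ into $\cO_3$ that the theorem itself sidesteps by allowing both values.
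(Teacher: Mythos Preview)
Your route is genuinely different from the paper's, and it is worth contrasting the two. The paper does \emph{not} organise the count around a free $(\Z/2)^r$-action of the Atkin--Lehner group. Instead it introduces the explicit $\cO^\times$-conjugation invariant $(d_\xi',d_\xi'')=(\gcd(x_1,d),\gcd(x_1+1,d))$ attached to $\xi=x_1+(2x_1+1)\myj+x_3\phi+x_4\myj\phi$, shows (Corollary~\ref{cor:ddashddoubledashvalues}) that $d_\xi'd_\xi''\in\{d,d/3\}$, and then proves the hard step (Proposition~\ref{prop:mainconjugacyresult}): two order-$3$ elements with the same $(d_\xi',d_\xi'')$ are $\cO^\times$-conjugate up to the involution $\xi\leftrightarrow\xi^*=\phi\xi^2\phi^{-1}$. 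This is done by writing $\myNrd(\alpha)$, for $\alpha$ running over solutions of $\eta\alpha=\alpha\xi$, as a binary quadratic form of discriminant~$-3$, which always represents~$\pm1$. Proposition~\ref{prop:xistarconjugatetoxi} then decides exactly when $\xi\sim\xi^*$ (namely iff $d_\xi'd_\xi''=d$), and Theorem~\ref{thm:bezoutresult} supplies existence of a $\xi$ for each admissible $(d',d'')$. The Atkin--Lehner elements appear only peripherally (Lemma~\ref{lem:ddashddoubledasheqtilded}) to transport existence from one pair $(d',d'')$ to another with the same product. What this buys the paper is that every step is a concrete computation with integers; what your approach would buy, if it went through, is a cleaner structural explanation of the factor~$2^r$.

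That said, your proposal has two real gaps. First, the freeness argument: from $\alpha^{-1}w_{d'}\in\Q(\xi)^\times$ you correctly deduce $d'\in\cL$, but this does \emph{not} force $d'=1$ (take $d'=7$). Your ``further integrality analysis'' is the actual content here. For $\xi=\myj$ it can indeed be completed: $c=\alpha^{-1}w_{d'}\in\cO\cap\Q(\myj)=\mathfrak o$ has $c\bar c=d'$, and writing $w_{d'}=\alpha c$ forces $\bar c\mid y_0$ in~$\mathfrak o$, contradicting $\gcd(y_0\bar y_0,d')=1$. But for a general $\xi$ the subring $\Z[\xi]\subset\cO$ is a different embedded copy of~$\Z[\myj]$, and you would need either to redo this for each $\xi$ or to first conjugate $\xi$ to~$\myj$ by an Atkin--Lehner element, which already presupposes much of the paper's machinery (Lemma~\ref{lem:atkinlehneraction} and Proposition~\ref{prop:mainconjugacyresult}).

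Second, and more seriously, the residual count $K=C_d/2^r$ is asserted but not argued. You gesture at ``local structure of $\cO_p$'' and ``Hensel-type lifting of embeddings $\mathfrak o_3\hookrightarrow\cO_3$'', but nothing in the proposal explains why there is exactly one $W$-orbit when $3\nmid d$, or why the $3$-adic obstruction splits as it does in cases (c)(i) versus (c)(ii). In the paper this is precisely where the work lies: distinguishing the classes with $d_\xi'd_\xi''=d$ from those with $d_\xi'd_\xi''=d/3$, and showing (Proposition~\ref{prop:xistarconjugatetoxi}) that the latter never occur when $3\nmid d$ or when $3\Vert d$ with $\tilde d\equiv1\pmod3$, while in case (c)(ii) Theorem~\ref{thm:bezoutresult}(b) is invoked to produce them. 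Your local-embedding sketch would have to reproduce this trichotomy, and as written it does not.
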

 In~(b), we have been able to show that $C_d=3\times2^r$ in many cases (see Section~\ref{sec:conjectures}), but have been unable
to show this in general, so we formulate:

\medskip\noindent{\bf Conjecture 2:} If $d$ is as in Theorem~\ref{thm:Cdcount}(b), then
$C_d$ is always~$3\times2^r$.

\smallskip This conjecture holds for~$d$ if and only
if Conjecture~1 holds for coprime $d',d''\ge1$ such that $d'd''=d/3$ (see Proposition~\ref{prop:compareconjectures}).
   
\smallskip To the order $\cO$ is classically associated a Shimura curve $\mathcal{X}(\cO)$, 
which is a moduli space for abelian surfaces with quaternionic multiplication by~$\cO$.
The number~$C_d'$ of conjugation classes by $\cO^\times$ of order~3 subgroups of $\cO^\times$ enters into 
the computation of the genus of the curve $\mathcal{X}(\cO)$. From the knowledge of~$C_d$, one can easily obtain~$C_d'$
(see Remark~\ref{rem:effectofsquaring}(b)). When $\cO$ is an Eichler 
order, $C_d'$ is known (see \cite[Chapitre~V, Section 3]{Vigneras} for references). Our formulas generalise 
that result to a large class of orders,  
since in fact every order containing the Eisenstein integers is either of the form~$\cO$ (for some~$d$), 
or contains such an order~$\cO$ with index~3 (see \cite{Roulleau}). 
The number~$C_d'$ is also linked to the problem of computing the number of generalised
 Kummer structures on generalised Kummer surfaces (see \cite{Roulleau}).
  
  {\bf Acknowledgements} The second author thanks A. Dz\v{a}mbi\'c, O. Ramar\'e and M. Stover for useful email
   exchanges on quaternion algebras or B\'ezout's Lemma. 
  \end{section}
\begin{section}{Atkin-Lehner elements and the proof of Theorem~\ref{thm:bezoutresult}(a)}\label{sec:ALelts}
  Suppose $d\ge1$ is an integer, and $d'$ is a Hall divisor of~$d$, and write $d''=d/d'$.
  Suppose that $u=x\bar x\in\cL$, $v=y\bar y\in\cL$ and $d'u-d''v=\epsilon\in\{-1,1\}$. Form
      \begin{equation}\label{eq:atkinlehner}
        w_{d'}=d'x+y\phi\in\cO.
      \end{equation}
      Then
    \begin{displaymath}
    \myNrd(w_{d'})={d'}^2x\bar x-dy\bar y=d'(d' x\bar x-d''y\bar y)=d'(d'u-d''v)=d'\epsilon.
    \end{displaymath}
    Moreover,
    \begin{displaymath}
      w_{d'}^2=d'\bigl(d'x^2+d''y\bar y+y(x+\bar x)\phi\bigr)=d'q,
    \end{displaymath}
    where $q\in\cO$. Also, $(d'\epsilon)^2=\myNrd(w_{d'}^2)={d'}^2\myNrd(q)$,
    so that $\myNrd(q)=1$, and $q\in\cO^\times$.

    Unless $d'=1$, $w_{d'}$ is not in~$\cO^\times$. Nevertheless, we have
    \begin{lemma}\label{lem:AtkinLehnerconstruction}The element $w_{d'}$ normalizes~$\cO$. That is,
      conjugation $\alpha\mapsto w_{d'}\alpha w_{d'}^{-1}$ is
      an automorphism of~$\cO$. If $x',y'\in\mathfrak o$ also satisfy
      $d'x'\bar x'-d''y'\bar y'=\epsilon'\in\{-1,1\}$, and if
      $w_{d'}'=d'x'+y'\phi$, then $w_{d'}'w_{d'}^{-1}\in\cO^\times$ and $w_{d'}^{-1}w_{d'}'\in\cO^\times$,
      so that the two-sided ideal $w_{d'}\cO=\cO w_{d'}$ of~$\cO$ does not depend
      on the particular choice of $x,y\in\mathfrak o$ satisfying $d'x\bar x-d''y\bar y\in\{-1,1\}$.
      Moreover, if $\alpha\in\cO^\times$, then $\alpha w_{d'}$ can be written $d'x'+y'\phi$,
      where $x',y'\in\mathfrak o$ also satisfy $d'x'\bar x'-d''y'\bar y'\in\{-1,1\}$.
    \end{lemma}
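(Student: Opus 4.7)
The plan is to treat the three assertions — normalization of $\cO$ by $w_{d'}$, independence of the two-sided ideal from the choice of $x,y$, and the expression of $\alpha w_{d'}$ in the required form — as a sequence of direct expansions in $\cH$, using only the defining relations $\phi x=\bar x\phi$ and $\phi^2=d$, together with the fact already established in the paragraph preceding the lemma that $w_{d'}^2=d'q$ with $q\in\cO^\times$.

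For normalization, the key intermediate step I would establish is that $w_{d'}\alpha w_{d'}\in d'\cO$ for every $\alpha\in\cO$. Writing $\alpha=a+b\phi$ with $a,b\in\mathfrak{o}$ and expanding the triple product, the resulting $1$- and $\phi$-components contain the traces $y\bar b+\bar y b$ and $xa+\bar x\bar a$ (which lie in $\Z$) together with terms carrying an explicit factor of $d'$ or of $d=d'd''$, so that a global factor of $d'$ can be extracted. Since $w_{d'}^2=d'q$ implies $w_{d'}^{-1}=w_{d'}q^{-1}/d'$, one then gets $w_{d'}\alpha w_{d'}^{-1}=(w_{d'}\alpha w_{d'}/d')\,q^{-1}\in\cO$.

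For the independence claim, I would compute $w'_{d'}w_{d'}^{-1}$ directly, using $w_{d'}^{-1}=(d'\bar x-y\phi)/(d'\epsilon)$: a short expansion places the product in $\cO$, and its reduced norm, equal to $\epsilon'/\epsilon\in\{-1,1\}$, puts it in $\cO^\times$. The product $w_{d'}^{-1}w'_{d'}$ is handled symmetrically. The equality of left and right ideals $w_{d'}\cO=\cO w_{d'}$ then follows from the already-proved normalization, since $w_{d'}\cO w_{d'}^{-1}=\cO$; and independence of the choice of $x,y$ is immediate from $w'_{d'}=(w'_{d'}w_{d'}^{-1})w_{d'}\in\cO^\times w_{d'}$.

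Finally, for $\alpha=a+b\phi\in\cO^\times$, the expansion $\alpha w_{d'}=(d'ax+db\bar y)+(d'b\bar x+ay)\phi$ shows that the first coefficient lies in $d'\mathfrak{o}$ because $d=d'd''$, so $\alpha w_{d'}=d'x'+y'\phi$ with $x'=ax+d''b\bar y$ and $y'=d'b\bar x+ay$ in $\mathfrak{o}$; the condition $d'x'\bar x'-d''y'\bar y'\in\{-1,1\}$ is then read off from $\myNrd(\alpha w_{d'})=\myNrd(\alpha)\myNrd(w_{d'})=\pm d'$ together with the identity $\myNrd(d'x'+y'\phi)=d'(d'x'\bar x'-d''y'\bar y')$. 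I do not expect any serious obstacle: the entire lemma is mechanical expansion, and the only delicate point is organizing the calculation of $w_{d'}\alpha w_{d'}$ so that the factor $d'$ emerges uniformly — this requires invoking $d=d'd''$ at precisely the right moments to absorb the $d$-coefficients into the desired global $d'$-factor.
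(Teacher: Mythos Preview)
Your proposal is correct and follows essentially the same route as the paper: direct expansion in $\cH$ using $\phi^2=d$, $\phi x=\bar x\phi$, and the explicit inverse $w_{d'}^{-1}=(d'\bar x-y\phi)/(d'\epsilon)$. The only organizational difference is in the normalization step: the paper expands $w_{d'}\alpha w_{d'}^{-1}$ directly and checks it lands in~$\cO$, whereas you first show $w_{d'}\alpha w_{d'}\in d'\cO$ and then post-multiply by $q^{-1}/d'$. Both work; your version is arguably tidier since the same identity $w_{d'}\alpha w_{d'}\in d'\cO$, combined with $w_{d'}^{-1}=q^{-1}w_{d'}/d'$ on the \emph{left}, immediately yields $w_{d'}^{-1}\alpha w_{d'}\in\cO$ as well---and you do need this reverse inclusion to conclude that conjugation is an \emph{automorphism} of~$\cO$ (equivalently, that $w_{d'}\cO w_{d'}^{-1}=\cO$ is an equality, which you invoke later). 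Just make sure you state that step explicitly; the paper handles it via $w_{d'}^{-1}\alpha w_{d'}=q^{-1}(w_{d'}\alpha w_{d'}^{-1})q$.
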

    \begin{proof}If $\alpha=a+b\phi\in\cO$, then $w_{d'}\alpha w_{d'}^{-1}$ equals
         \begin{displaymath}
           \begin{aligned}
             &(d'x+y\phi)(a+b\phi)\frac{1}{d'\epsilon}(d'\bar x-y\phi)\\
             &=\epsilon\bigl(x(d'a\bar x-db\bar y)+d''y(d'\bar b\bar x-\bar a\bar y)\bigr)
             +\bigl(x(d'bx-ay)+y(\bar ax-d''\bar b y)\bigr)\phi\in\cO.\\
             \end{aligned}
         \end{displaymath}
         Also,
                 \begin{displaymath}
 w_{d'}^{-1}\alpha w_{d'}=\bigl(\frac{1}{d'}q^{-1}w_{d'}\bigr)\alpha w_{d'}=q^{-1}w_{d'}\alpha\bigl(\frac{1}{d'}w_{d'}q^{-1}\bigr)q=q^{-1}(w_{d'}\alpha w_{d'}^{-1})q\in\cO,
         \end{displaymath}
         so that conjugation by~$w_{d'}$ is an automorphism of~$\cO$. Moreover,
                \begin{displaymath}
           \begin{aligned}
             w_{d'}'w_{d'}^{-1}=(d'x'+y'\phi)\frac{1}{d'\epsilon}(d'\bar x-y\phi)
             &=\epsilon\bigl(d'x'\bar x-d''y'\bar y+(xy'-x'y)\phi\bigr)\in\cO.\\
             \end{aligned}
                \end{displaymath}
                Since $\myNrd(w_{d'}'w_{d'}^{-1})=d'\epsilon'(d'\epsilon)^{-1}=\epsilon'/\epsilon\in\{-1,1\}$,
                we have $w_{d'}'w_{d'}^{-1}\in\cO^\times$. Writing ${w_{d'}'}^2=d'q'$, with $q'\in\cO^\times$,
                \begin{displaymath}
             w_{d'}^{-1}w_{d'}'=\bigl(\frac{1}{d'}q^{-1}w_{d'}\bigr)w_{d'}'=q^{-1}w_{d'}\bigl(\frac{1}{d'}w_{d'}'{q'}^{-1}\bigr)q'=q^{-1}(w_{d'}{w_{d'}'}^{-1})q'
             \in\cO,
                \end{displaymath}
                so that $w_{d'}\cO$ does not depend on the particular choice of $x,y$ satisfying $d'x\bar x-d''y\bar y\in\{-1,1\}$.
                To prove the last statement, we calculate that $\alpha w_{d'}$ equals
                \begin{displaymath}
                  d'(ax+d''b\bar y)+(ay+d'b\bar x)\phi,
                \end{displaymath}
                and verify for $x'=ax+d''b\bar y$ and $y'=ay+d'b\bar x$ that $d'x'\bar x'-d''y'\bar y'=\epsilon\myNrd(\alpha)$.              
    \end{proof}
    \begin{definition}
        We call $w\in\cO$ an {\it Atkin-Lehner element\/} of~$\cO$ if $w^2=mq$ for some $q\in\cO^\times$ and nonzero $m\in\Z$,
        and if also $w\cO w^{-1}=\cO$.   An  Atkin-Lehner element $w$ is said trivial 
    if $w\in n\cO^\times$ for some nonzero $n\in\Z$. 
    \end{definition}
    
    The elements $w_{d'}$ with $d'\neq 1$ are non-trivial Atkin-Lehner elements of~$\cO$.

    When a prime $p$ does not split, $\Q_p(\myj):=\Q(\myj)\otimes_\Q \Q_p$ is a degree~2 field extension of~$\Q_p$ whose
    ring of integers is~$\Z_p[\myj]$.
    \begin{lemma}\label{lem:pnonsplitlocalBezout}Let $d',d''\ge1$ be coprime integers. If  a prime~$p$ does not split, there exist
      $x,y\in\Z_p[\myj]$ such that $d'x\bar x-d''y\bar y\in\{-1,1\}$. 
    \end{lemma}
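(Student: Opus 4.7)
\medskip\noindent\textbf{Proof plan.}
Since $p$ does not split in~$\Q(\myj)$, either $p\equiv2$ (mod~3) (inert) or $p=3$ (ramified); in both cases $\Z_p[\myj]$ is the ring of integers of a quadratic extension of~$\Q_p$, and $x\mapsto x\bar x$ is the corresponding field norm.

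The plan is first to reduce the statement to a one-variable problem, and then to describe the image of this norm in each of the two cases. Since $\gcd(d',d'')=1$, at least one of $d',d''$ lies in $\Z_p^\times$. Swapping the roles of $(d',x)$ and $(d'',y)$ only replaces $d'x\bar x-d''y\bar y$ by its negative, hence preserves membership in $\{-1,1\}$; so I may assume $d'\in\Z_p^\times$. Setting $y=0$ then reduces the problem to exhibiting $x\in\Z_p[\myj]$ with $x\bar x=\pm1/d'$; equivalently, I must show that at least one of the two units $\pm1/d'\in\Z_p^\times$ lies in the image of the norm map from $\Z_p[\myj]$.

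If $p$ is inert, then $\Q_p(\myj)/\Q_p$ is unramified, the residue norm $\F_{p^2}^\times\to\F_p^\times$ is the usual surjection, and Hensel's lemma lifts this to give $\{x\bar x:x\in\Z_p[\myj]^\times\}=\Z_p^\times$; in particular, even $+1/d'$ is a norm. If $p=3$, I would first observe that $a^2-ab+b^2\equiv(a+b)^2$ (mod~3) for $a,b\in\Z_3$, so the image of the norm on units lies in $U=\{u\in\Z_3^\times:u\equiv1\ \text{(mod 3)}\}$. Conversely, Hensel's lemma applied to $T^2-u$ (with the simple root $T\equiv1$ (mod~3), using that $3$ is odd) shows that every $u\in U$ is a square in $\Z_3$, hence of the form $c\bar c$ with $c\in\Z_3$. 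Thus $U$ is exactly the norm image on units. Since $d'\not\equiv0$ (mod~3), precisely one of $+1/d'$ and $-1/d'$ lies in~$U$, and that sign yields the required~$x$.

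The only step with genuine content is the identification of the norm image in the ramified case $p=3$, and this reduces to the short Hensel-type argument above; I do not foresee any deeper obstacle.
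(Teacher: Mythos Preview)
Your proof is correct. The reduction to the one-variable norm problem by setting $y=0$ (after swapping so that $d'\in\Z_p^\times$) is sound, and your identification of the norm image in both the inert and the ramified case is accurate.

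Your route differs from the paper's in emphasis. For $p\ne3$ the paper treats $d'x\bar x-d''y\bar y$ as a quaternary (or binary, when $p\mid d'd''$) quadratic form over~$\Z_p$, checks that its discriminant is a $p$-adic unit, and then invokes Serre's results on representation by quadratic forms to conclude that the form represents both $+1$ and~$-1$. For $p=3$ the paper splits into the subcases $3\nmid d'd''$, $3\mid d''$, $3\mid d'$; the middle case is essentially your square-root-via-Hensel argument, but the first case is handled separately via the binary form $d'x_1^2-d''y_1^2$. Your approach is leaner: by always passing to the one-variable problem you avoid the quadratic-form machinery entirely, need only Hensel's lemma and the standard surjectivity of the norm in unramified extensions, and collapse the $p=3$ subcases into a single one. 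The trade-off is that the paper's argument yields both signs when $p\ne3$, whereas yours produces only the one sign that the lemma actually requires.
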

    \begin{proof}First suppose that $p\ne3$. For $x_1,x_2,y_1,y_2\in\Q_p$, define
      \begin{displaymath}
        Q_p(x_1,x_2,y_1,y_2)=\begin{cases}d'(x_1^2-x_1x_2+x_2^2)-d''(y_1^2-y_1y_2+y_2^2)&\text{if}\ p\nmid d'd'',\\
        -d''(y_1^2-y_1y_2+y_2^2)&\text{if}\ p\mid d',\\
        d'(x_1^2-x_1x_2+x_2^2)&\text{if}\ p\mid d''.
        \end{cases}
      \end{displaymath}
      The quadratic form $Q_p$ has rank 4, 2 and~2, respectively, in these three cases. By \cite[Chapitre~IV, Section~1.7]{serrebook}
      the form $Q_p$ modulo~$p$ represents any element of the finite field~$\F_p$, so in particular,
      $Q_p$ represents 1 and~$-1$. The discriminant of~$Q_p$ is $-9d'd''$,
      $-3d''$ and~$3d'$, respectively, which is invertible in~$\Z_p$ in each of the three cases.
      Therefore by \cite[Chapitre~II, Corollaires~2 \&~3]{serrebook}, $Q_p$ represents 1 and~$-1$ in~$\Z_p$.
      If $Q_p(x_1,x_2,y_1,y_2)\in\{-1,1\}$, let $x=x_1+x_2\myj$ and $y=y_1+y_2\myj$.

      Now suppose that $p=3$. If 3 does not divide~$d'd''$, the same arguments with the rank~2 form
      $Q_p(x_1,y_1)=d'x_1^2-d''y_1^2$ shows that $Q_p$ represents 1 and~$-1$ in~$\Z_3$.

      Suppose now that 3 divides $d''$, so that $\gcd(3,d')=1$. Write $d''=3\tilde d$ for some integer~$\tilde d$,
      and consider the equation
      \begin{displaymath}
        d'u-3\tilde d v=\epsilon,
      \end{displaymath}
      where $\epsilon\in\{-1,1\}$ and where $u,v\in\cL$. Then $\gcd(3,u)=1$ must hold, and so $u\in\cL$ implies
      that $u\equiv1$ (mod~3). So $d'\equiv\epsilon$ (mod~3) must hold. Since $d'$ is invertible in~$\Z_3$,
      and $\epsilon/d'\equiv1$ (mod~3), there must exist an $\alpha\in\Z_3$ such that $\alpha^2=\epsilon/d'$
      (see, e.g., \cite[Chapitre~II, Section~3.3, Corollaire]{serrebook}), and then $d'x\bar x-d''y\bar y=\epsilon$
      for $(x,y)=(\alpha,0)$.

      Suppose instead that 3 divides~$d'$, so that $\gcd(3,d'')=1$. Since solving the equation $d'x\bar x-d''y\bar y=\pm1$
      is equivalent to solving the equation $d''x\bar x-d'y\bar y=\mp1$, we are reduced to the previous case.
    \end{proof}

    For each prime~$p$, let $\cH_p=\cH\otimes_\Q\Q_p$ and $\cO_p=\cO\otimes_\Z\Z_p$.
    We define a two sided ideal~$I_p=w_p\cO_p$ of~$\cO_p$ as follows: 

    \smallskip
    \noindent1. (Split case). When $p$ splits,
    there is a $\myj_p\in\Z_p$ satisfying $\myj_p^2+\myj_p+1=0$. As $p\ne3$, the map
    $(a_1,a_2)\mapsto(a_1+a_2\myj_p,a_1+a_2\myj_p^2)$ is a $\Q_p$-linear bijection
    of~$\Q_p^2$ inducing a bijection $\Z_p^2\to\Z_p^2$. 
    So if we follow the above embedding $\Psi:\cH\to M_2(\Q(\myj))$ by the embedding $M_2(\Q(\myj))\to M_2(\Q_p)$
    mapping $\myj$ to~$\myj_p$ entrywise, we get an isomorphism $\Psi_p:\cH_p\cong M_2(\Q_p)$.
    This in turn induces an embedding $\cO_p\to M_2(\Z_p)$, whose image is the subring
    $\{(a_{ij})\in M_2(\Z_p):a_{21}\in d\Z_p\}$ of~$M_2(\Z_p)$.

    Choose any $x,y,z,t\in\Z_p$ for which $d'xt-d''yz\in\{-1,1\}$. Such elements
    exist by B\'ezout's Lemma. Let $w_p\in\cO_p$ be the preimage of the matrix
    \begin{displaymath}
      \begin{pmatrix}
        d'x&y\\
        dz&d't\\
        \end{pmatrix}
      \end{displaymath}
    under the above embedding. Then $w_p\cO_p=\cO_pw_p$, $w_p^2=d'q_p$ for some $q_p\in\cO_p^\times$, and $I_p=w_p\cO_p$
    is a two-sided ideal of~$\cO_p$ which doesn't depend on the particular choice we made for $x,y,z,t$. The proof of
    this is very similar to that of Lemma~\ref{lem:pnonsplitlocalBezout}.

    \smallskip\noindent2. (Non-split case). When $p$ does not split,
    $\cH_p$ is the quaternion algebra $\{a+b\phi:a,b\in\Q_p(\myj)\}$ over~$\Q_p(\myj)$. The embedding
    $\Psi:\cH\to M_2(\Q(\myj))$ induces an embedding $\cH_p\to M_2(\Q_p(\myj))$, and the image of
    $\cO_p$ under this embedding is contained in $\{(a_{ij})\in M_2(\Z_p[\myj]):a_{21}\in d\Z_p[\myj]\}$.

    By Lemma~\ref{lem:pnonsplitlocalBezout}, we can choose $x,y\in\Z_p[\myj]$ so that $d'x\bar x-d''y\bar y\in\{-1,1\}$,
    and let $w_p=d'x+y\phi\in\cO_p$. Again, $w_p\cO_p=\cO_pw_p$, $w_p^2=d'q_p$ for some $q_p\in\cO_p^\times$, and $I_p=w_p\cO_p$
    is a two-sided ideal of~$\cO_p$ which doesn't depend on the particular choice we made for $x$ and~$y$.

    In both the split and non-split case, $I_p=\cO_p$ for all primes $p$ which do not divide~$d$, as
    then $\myNrd(w_p)=\pm d'$ is invertible in~$\Z_p$.

    For an $\cO$-lattice $\Lambda\subset\cH$, we denote by~$\Lambda_p$ the $\cO_p$-lattice
    generated by~$\Lambda$ in~$\cH_p$. We shall use the following two results:

    \begin{proposition}\label{prop: alsinabayer}(\cite[Proposition~1.57, p.~13]{alsinabayer}) Let $\Lambda'\subset\cH$ be an $\cO$-lattice.
      For each prime~$p$, let $L_p$ be a $\cO_p$-lattice in~$\cH_p$ such that $L_p=\Lambda_p'$ for all but a finite
      number of~$p$. Then there exists an $\cO$-lattice $\Lambda$ in~$\cH$ such that $\Lambda_p=L_p$ for all~$p$.
    \end{proposition}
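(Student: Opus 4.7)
The proposition is a standard local-to-global existence result for lattices in the quaternion algebra $\cH$; the strategy is to modify the given $\cO$-lattice $\Lambda'$ at the finitely many primes where it disagrees with the prescribed local data, using a Chinese Remainder decomposition to assemble the correct global object from the local ones.

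Let $S$ be the finite set of primes at which $L_p\ne\Lambda'_p$. Since $L_p$ and $\Lambda'_p$ are both full $\Z_p$-lattices in the four-dimensional $\Q_p$-vector space $\cH_p$, they are commensurable, so one can choose a positive integer $N$, divisible only by primes of $S$, satisfying $N\Lambda'_p\subseteq L_p\subseteq N^{-1}\Lambda'_p$ for every $p\in S$; the same inclusions hold trivially for $p\notin S$, since there $L_p=\Lambda'_p$ and $N$ is a unit in $\Z_p$. I would then define
\begin{equation*}
\Lambda:=\bigl\{\xi\in N^{-1}\Lambda' : \iota_p(\xi)\in L_p\ \text{for every}\ p\in S\bigr\},
\end{equation*}
where $\iota_p:\cH\hookrightarrow\cH_p$ denotes the canonical embedding. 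Because each $L_p$ is stable under $\cO_p$ and $\iota_p(\cO)\subseteq\cO_p$, this $\Lambda$ is automatically an $\cO$-submodule of the finitely generated $\Z$-module $N^{-1}\Lambda'$; since $N\Lambda'_p\subseteq L_p$ for every $p\in S$ we have $N\Lambda'\subseteq\Lambda$, so $\Lambda$ has full $\Q$-rank and is an $\cO$-lattice in $\cH$.

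The heart of the argument is verifying $\Lambda_p=L_p$ for every prime $p$. For $p\notin S$ this is immediate from the sandwich $N\Lambda'\subseteq\Lambda\subseteq N^{-1}\Lambda'$ combined with $N\in\Z_p^\times$. For $p\in S$, the inclusion $\Lambda_p\subseteq L_p$ comes directly from the defining condition; for the reverse inclusion I would appeal to the Chinese Remainder decomposition
\begin{equation*}
N^{-1}\Lambda'/N\Lambda'\ \cong\ \bigoplus_{q\in S}N^{-1}\Lambda'_q/N\Lambda'_q,
\end{equation*}
which is valid because this finite abelian group has nontrivial $q$-primary part only for $q\mid N$, and every such $q$ lies in $S$. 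Given $\ell\in L_p$, lift the tuple whose $p$-component is the class of $\ell$ and whose other components vanish to some $\xi\in N^{-1}\Lambda'$. Then $\iota_p(\xi)\equiv\ell$ modulo $N\Lambda'_p\subseteq L_p$, so $\iota_p(\xi)\in L_p$; and for $q\in S\setminus\{p\}$, $\iota_q(\xi)\in N\Lambda'_q\subseteq L_q$. Hence $\xi\in\Lambda$. Combined with $N\Lambda'_p\subseteq\Lambda_p$, this forces $\ell\in\Lambda_p$.

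The main obstacle is the approximation step just described: one must package the local datum $\ell\in L_p$ together with trivial conditions at the other primes of $S$ into a single global element of $N^{-1}\Lambda'$. The displayed CRT isomorphism is precisely what makes this possible, and it rests on the fact that $N^{-1}\Lambda'/N\Lambda'$ is a finite abelian group supported on primes of $S$. Once that approximation is in hand the rest of the proof is formal bookkeeping, and the $\cO$-action requires no separate treatment because it is built into the definition of $\Lambda$ from the outset.
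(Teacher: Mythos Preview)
Your argument is the standard local--global existence proof for lattices and is correct as written: the definition of $\Lambda$ as the pullback of the $L_p$ inside $N^{-1}\Lambda'$, together with the Chinese Remainder decomposition of $N^{-1}\Lambda'/N\Lambda'$, yields exactly the approximation needed to show $L_p\subseteq\Lambda_p$ at the bad primes, and the remaining verifications are routine.

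Note, however, that the paper does not supply its own proof of this proposition: it is quoted verbatim from \cite[Proposition~1.57, p.~13]{alsinabayer} and used as a black box in the proof of Theorem~\ref{thm:bezoutresult}(a). So there is no ``paper's proof'' to compare against; your write-up is essentially what one finds in Alsina--Bayer (or in any standard treatment of local--global correspondence for lattices, e.g.\ Reiner or Weil), and would serve perfectly well as a self-contained replacement for the citation.
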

    \begin{theorem}\label{thm:idealsareprincipal}(\cite[Theorem 3]{Roulleau}) Any ideal $\Lambda$ in the order~$\cO$ is principal.
    \end{theorem}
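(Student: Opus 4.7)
The plan is to establish a local-global structure theorem for two-sided $\cO$-ideals: first classify the two-sided ideals of $\cO_p$ for each prime~$p$, then use the global Atkin-Lehner elements $w_{d'}$ produced in Lemma~\ref{lem:AtkinLehnerconstruction} via Theorem~\ref{thm:bezoutresult} to realise a generator whose local shape matches a given ideal, and finally glue via Proposition~\ref{prop: alsinabayer}.

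First, for each prime~$p$ I would show that every two-sided ideal of~$\cO_p$ has the form $p^k w_p^{\varepsilon}\cO_p$ with $k\in\Z$ and $\varepsilon\in\{0,1\}$, where $w_p$ is the local Atkin-Lehner element defined just before Proposition~\ref{prop: alsinabayer} (with $w_p\cO_p=\cO_p$ at primes $p\nmid d$). At such primes this reduces to the standard fact that the two-sided ideals of $M_2(\Z_p)$ and of the maximal order in a local quaternion division algebra are all scalar. At primes $p\mid d$ the order $\cO_p$ is non-maximal, so one must argue directly with the explicit orders $\{(a_{ij})\in M_2(\Z_p):a_{21}\in d\Z_p\}$ in the split case and $\Z_p[\myj]+\Z_p[\myj]\phi$ in the non-split case. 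The normalising relation $w_p\cO_p w_p^{-1}=\cO_p$ together with the identity $w_p^2=d_p'q_p$ (where $d_p'$ is the $p$-part of~$d$ and $q_p\in\cO_p^\times$) lets one reduce any two-sided ideal to a scalar one after dividing by a suitable power of~$w_p$.

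Given a two-sided ideal $\Lambda$ of~$\cO$, form its localisations $\Lambda_p=p^{k_p}w_p^{\varepsilon_p}\cO_p$, only finitely many of which differ from~$\cO_p$. Set $d'=\prod_{\varepsilon_p=1}p^{m_p}$, a Hall divisor of~$d$, and $n=\prod_p p^{k_p}\in\Q^\times$. Theorem~\ref{thm:bezoutresult}(a) furnishes $u,v\in\cL$ with $d'u-(d/d')v\in\{-1,1\}$, and the construction in~(\ref{eq:atkinlehner}) then yields a global element $w_{d'}\in\cO$. The local uniqueness statement of Lemma~\ref{lem:AtkinLehnerconstruction}, applied over each~$\Z_p$, forces $(w_{d'}\cO)_p=w_p^{\varepsilon_p}\cO_p$. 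Hence $(nw_{d'}\cO)_p=\Lambda_p$ for every prime~$p$, and Proposition~\ref{prop: alsinabayer} yields $\Lambda=nw_{d'}\cO$.

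The main obstacle I anticipate is the local classification in the first step at primes $p\mid d$ where $\cO_p$ is strictly non-maximal (in particular when $p$ splits in~$\Q(\myj)$ or when $p^2\mid d$): one must rule out any exotic two-sided ideals lying between $w_p\cO_p$ and~$\cO_p$, and obtain a clean parametrisation by the pair $(k,\varepsilon)$. A secondary concern is that, if \emph{ideal} in the theorem is intended to encompass one-sided ideals, an additional input, such as a class-number computation showing that the left class number of~$\cO$ equals~$1$, would be required on top of the above argument.
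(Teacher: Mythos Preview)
Your proposal has a genuine circularity problem. You invoke Theorem~\ref{thm:bezoutresult}(a) to produce global Atkin--Lehner elements $w_{d'}$, but in this paper the proof of Theorem~\ref{thm:bezoutresult}(a) (given immediately after Theorem~\ref{thm:idealsareprincipal}) uses Theorem~\ref{thm:idealsareprincipal} as its key input: one builds the lattice $\Lambda$ from the local ideals $I_p$ via Proposition~\ref{prop: alsinabayer} and then appeals to Theorem~\ref{thm:idealsareprincipal} to write $\Lambda=\xi\cO$. So within the logical structure of the paper, your argument assumes what it is trying to prove. Note, incidentally, that the paper does not supply its own proof of Theorem~\ref{thm:idealsareprincipal}; the statement is quoted from \cite[Theorem~3]{Roulleau} and used as a black box, so there is no in-paper argument to compare against.

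Your secondary concern is also decisive here, not merely a side issue. In the proof of Theorem~\ref{thm:bezoutresult}(a) the lattice $\Lambda$ is first written as a \emph{one-sided} principal ideal $\xi\cO$, and only afterwards is it shown (locally) that $\xi\cO=\cO\xi$. So the version of Theorem~\ref{thm:idealsareprincipal} actually needed is that the one-sided class number of~$\cO$ is~$1$; a classification of two-sided ideals alone does not suffice. Your local classification step is also more delicate than you indicate: at primes $p$ with $p^m\|d$ and $m\ge2$, the local order is a non-maximal (Eichler or Eichler-like) order of level~$p^m$, and one must verify carefully that the monoid of two-sided $\cO_p$-ideals is really generated by $p$ and a single Atkin--Lehner element, with no intermediate two-sided ideals. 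Even granting that, the argument would still be circular as written.
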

    \begin{proof}[Proof of Theorem~\ref{thm:bezoutresult}(a)]Let $d=d'd''$, for the $d'$ and~$d''$ of the theorem.
      Let $\cH$ and~$\cO\subset\cH$ be as in Section~\ref{sec:preliminaries}. Let
      $\Lambda'=\cO$, and for each prime~$p$, let $L_p=I_p=w_p\cO_p$, as defined above. Since $L_p=\cO_p$ for every
      prime~$p$ not dividing~$d$, the hypotheses of Proposition~\ref{prop: alsinabayer} are satisfied,
      so there is an $\cO$-lattice $\Lambda\subset\cH$ such that $I_p=\Lambda_p$ for all~$p$. Then
      $\Lambda\subset\cO$ since $\Lambda_p\subset\cO_p$ for all~$p$. By Theorem~\ref{thm:idealsareprincipal},
      there is a $\xi\in\cO$ so that $\Lambda=\xi\cO$, and let $\xi_p$ denote its image in~$\cO_p\subset\cH_p$.
      When $p$ divides~$d$, $\xi_p\cO_p=\Lambda_p=w_p\cO_p$, so that $w_p^{-1}\xi_p\in\cO_p^\times$. Hence $\xi_pw_p^{-1}=w_p(w_p^{-1}\xi_p)w_p^{-1}\in\cO_p^\times$,
      since conjugation by~$w_p$ is an automorphism of~$\cO_p$. Therefore $\cO_p\xi_p=\cO_pw_p=w_p\cO_p=\xi_p\cO_p$. If $p$ does not divide~$d$,
      then $\xi_p\cO_p=\Lambda_p=I_p=\cO_p$, so that $\xi_p\in\cO_p^\times$, so that again $\cO_p\xi_p=\xi_p\cO_p$.
      Hence $\xi\cO=\cO\xi$ because this is true locally. When $p$ divides~$d$, then
      $\xi_p^2\cO_p=\xi_p(w_p\cO_p)=\xi_p(\cO_pw_p)=w_p\cO_pw_p=w_p^2\cO_p=d'\cO_p$. When $p$ does not divide~$d$, then $\xi_p\in\cO_p^\times$,
      so that $\xi_p^2\cO_p=\cO_p=d'\cO_p$, as $d'$ is an invertible element of~$\Z_p$. So $\xi^2\cO=d'\cO$ because this is true locally.
      So $\xi^2=d'q$ for some $q\in\cO^\times$. Thus $\myNrd(\xi)^2={d'}^2\myNrd(q)$, so that $\myNrd(q)=1$ and $\myNrd(\xi)=\epsilon d'$
      for some $\epsilon\in\{-1,1\}$. Write $\xi=x_1'+x_2'\myj+y_1\phi+y_2\myj\phi$, where $x_1',x_2',y_1,y_2\in\Z$.
      We show that $d'$ divides~$x_1'$ and~$x_2'$ as follows. If $p$ divides~$d'$, suppose that $p^m\|d'$.
      Write $\xi_p=w_pq_p$, where $q_p\in\cO_p^\times$.
       
      In the split case, in terms of matrices, we have
      \begin{displaymath}
        \begin{pmatrix}x_1'+x_2'\myj_p&y_1+y_2\myj_p\\
          d(y_1+y_2\myj_p^2)&x_1'+x_2'\myj_p^2\\
        \end{pmatrix}
        =
        \begin{pmatrix}d'x_p&y_p\\
          dz_p&d't_p\\
        \end{pmatrix}
        \begin{pmatrix}q_{11}&q_{12}\\
          dq_{21}'&q_{22}\\
        \end{pmatrix},
      \end{displaymath}
      where $x_p,y_p,z_p,t_p\in\Z_p$ satisfy $d'x_pt_p-d''y_pz_p\in\{-1,1\}$, and where the entries $q_{11}$, $q_{12}$, $q_{21}'$
      and $q_{22}$ are also in~$\Z_p$, and
      satisfy $q_{11}q_{22}-dq_{21}'q_{21}\in\Z_p^\times$. Comparing $(1,1)$ and $(2,2)$ entries, we see that $x_1'+x_2'\myj_p,x_1'+x_2'\myj_p^2\in d'\Z_p$,
      and so (using $p\ne3$) we see that $x_1',x_2'\in d'\Z_p\cap\Z=p^m\Z_p\cap\Z$, so that $p^m$ divides~$x_1'$ and~$x_2'$. 

      In the non-split case,
      \begin{displaymath}
        \begin{pmatrix}x_1'+x_2'\myj&y_1+y_2\myj\\
          d(y_1+y_2\myj^2)&x_1'+x_2'\myj^2\\
        \end{pmatrix}
        =
        \begin{pmatrix}d'x_p&y_p\\
          d\bar y_p&\bar x_p\\
        \end{pmatrix}
        \begin{pmatrix}q_{11}&q_{12}\\
          dq_{21}'&q_{22}\\
        \end{pmatrix},
      \end{displaymath}
      where $x_p,y_p\in\Z_p[\myj]$ and $d'x_p\bar x_p-d''y_p\bar y_p\in\{-1,1\}$, and where the entries $q_{11}$, $q_{12}$, $q_{21}'$
      and $q_{22}$ are also in~$\Z_p[\myj]$, and
      satisfy $q_{11}q_{22}-dq_{21}'q_{21}\in\Z_p[\myj]^\times$. Comparing $(1,1)$ entries, we see that $x_1'+x_2'\myj\in d'\Z_p[\myj]$,
      and so $x_1',x_2'\in d'\Z_p\cap\Z=p^m\Z_p\cap\Z$, so that again $p^m$ divides~$x_1'$ and~$x_2'$.

      It follows that $d'$ divides $x_1'$ and~$x_2'$, and so we can write $x_1'=d'x_1$ and $x_2'=d'x_2$ with
      $x_1,x_2\in\Z$. Let $x=x_1+x_2\myj$ and $y=y_1+y_2\myj$. Then $u=x\bar x\in\cL$, $v=y\bar y\in\cL$ and
      $\myNrd(\xi)=\epsilon d'$ shows that $d'u-d''v=\epsilon$.
    \end{proof}
    \begin{corollary}Suppose that $d=p_1^{m_1}\cdots p_r^{m_r}$, with $p_1,\ldots,p_r$ distinct primes. There are
      $2^r$ Hall divisors $d'$ of~$d$, and for each such~$d'$ there are Atkin-Lehner elements~\eqref{eq:atkinlehner}.
    \end{corollary}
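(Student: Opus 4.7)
The plan is to split the corollary into two essentially independent verifications. For the count of Hall divisors, I would observe that if $d'\mid d$ and $\gcd(d',d/d')=1$, then for each prime $p_i$ the exponent of $p_i$ in $d'$ cannot lie strictly between $0$ and $m_i$, since otherwise $p_i$ would divide both $d'$ and $d/d'$. Hence each prime power $p_i^{m_i}$ is taken entirely into $d'$ or entirely into $d/d'$, yielding a bijection between Hall divisors of $d$ and subsets of $\{1,\ldots,r\}$, so there are $2^r$ Hall divisors.

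For the existence part, the strategy is simply to invoke Theorem~\ref{thm:bezoutresult}(a), which has just been proved. Given a Hall divisor $d'$ of $d$, the integers $d'$ and $d''=d/d'$ are coprime positive integers, so that theorem supplies $u,v\in\cL$ with $d'u-d''v=\epsilon\in\{-1,1\}$. Writing $u=x\bar x$ and $v=y\bar y$ for some $x,y\in\mathfrak o$, the element $w_{d'}=d'x+y\phi$ of~\eqref{eq:atkinlehner} is, by the opening computation of Section~\ref{sec:ALelts} together with Lemma~\ref{lem:AtkinLehnerconstruction}, an Atkin-Lehner element: it satisfies $\myNrd(w_{d'})=d'\epsilon$, $w_{d'}^2=d'q$ for some $q\in\cO^\times$, and conjugation by $w_{d'}$ normalizes $\cO$.

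There is no serious obstacle here, since both parts are immediate consequences of material already in place: the enumeration of Hall divisors is elementary, and the construction of $w_{d'}$ from $x$ and $y$ is precisely the machinery developed at the start of Section~\ref{sec:ALelts}. The content of the corollary is really a bookkeeping statement that records how many Atkin-Lehner elements of the form~\eqref{eq:atkinlehner} the refined B\'ezout identity produces, one for each Hall decomposition $d=d'd''$.
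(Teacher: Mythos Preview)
Your proposal is correct and matches the paper's approach: the corollary is stated without proof in the paper, as it is an immediate consequence of Theorem~\ref{thm:bezoutresult}(a) together with the construction of $w_{d'}$ at the start of Section~\ref{sec:ALelts}, exactly as you describe. The enumeration of Hall divisors is elementary and your argument for it is the standard one.
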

    \begin{subsection}{An example of Atkin-Lehner elements.}Let $\cH$ be as above for $d=40$.
      Then $\cH$ is the quaternion algebra over~$\Q$ with discriminant~10, and with canonical involution
      $x+y\phi\mapsto \bar x-y\phi$. The quaternion order~$\cO$ has discriminant~120. Since the level of~$\cO$ is not coprime
      to the discriminant of~$\cH$, this is not an Eichler order. In the B\'ezout relation $5\cdot21-8\cdot13=1$, we
      have $21=x\bar x$ and $13=y\bar y$ for $x=1+5\myj$ and $y=1-3\myj$. Let us define
      \begin{displaymath}
        w_5=5x+\bar y\phi\quad\text{and}\quad w_8=8y-x\phi.
      \end{displaymath}
      Then $\myNrd(w_5)=5$ and $\myNrd(w_8)=-8$, so in particular $w_5$ and~$w_8$ are not invertible in~$\cO$. These elements are however
      in the normalizer of~$\cO$ and define two Atkin-Lehner involutions on the Shimura curve $\mathcal{X}(\cO)$ associated to $\cO$. One
      has $w_5^2=5(-3w_5-1)$ and $w_8^2=8(5w_8+1)$, with $\myNrd(-3w_5-1)=\myNrd(5w_8+1)=1$. Moreover, $w_8w_5=-w_{40}$ for $w_{40}=\phi$.
      The element $w_3=1+2\myj$ is also in the normalizer of~$\cO$, with square $w_3^2=-3$.
    \end{subsection}
  \end{section}
\begin{section}{Elements of order~3.}\label{sec:order3elts}

In this Section, we give some preliminary results for the proof of Theorems~\ref{thm:bezoutresult} and~\ref{thm:Cdcount}.

   \begin{lemma}\label{lem:xiorder3conds}Suppose that $\xi=x_1+x_2\myj+x_3\phi+x_4\myj\phi\in\cO$ has order~3. Then
  \begin{equation}\label{eq:xiorder3equation}
    \xi^2+\xi+1=0.
  \end{equation}
  This holds if and only if $x_2=2x_1+1$ and
  \begin{equation}\label{eq:xiorder3cond}
    3x_1(x_1+1)=d(x_3^2-x_3x_4+x_4^2).
    \end{equation}
\end{lemma}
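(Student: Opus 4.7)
The plan is to separate the two assertions: first that an element of order~$3$ in~$\cO$ automatically satisfies $\xi^2 + \xi + 1 = 0$, and then that this polynomial identity is equivalent to the two coordinate conditions $x_2 = 2x_1 + 1$ and~\eqref{eq:xiorder3cond}.

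For the first step, I would work through the reduced characteristic polynomial $X^2 - \myTr(\xi)X + \myNrd(\xi)$, which $\xi$ satisfies by Cayley--Hamilton applied to $\Psi(\xi) \in M_2(\Q(\myj))$. From $\xi^3 = 1$ we get $\myNrd(\xi)^3 = 1$, hence $\myNrd(\xi) = 1$. The centre of $\cH$ is $\Q$, and $\Q^\times$ contains no element of order~$3$, so $\xi \notin \Q\cdot 1$; hence the reduced characteristic polynomial coincides with the minimal polynomial of $\xi$. As a monic quadratic divisor of $X^3 - 1 = (X-1)(X^2 + X + 1)$, this minimal polynomial must be $X^2 + X + 1$, which establishes~\eqref{eq:xiorder3equation} and records $\myTr(\xi) = -1$, $\myNrd(\xi) = 1$ for free. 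It is essential to argue via the minimal polynomial here rather than try to cancel the factor $\xi - 1$ from the identity $(\xi - 1)(\xi^2 + \xi + 1) = 0$, because $\cH$ may be split (so that $\cH \cong M_2(\Q)$) and thus contain zero divisors; this is the one genuinely subtle point in the proof.

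For the equivalence in the second assertion, note conversely that $\xi^2 + \xi + 1 = 0$ forces $X^2 + X + 1$ to be the minimal polynomial of~$\xi$ (by irreducibility over~$\Q$), hence also its reduced characteristic polynomial, so \eqref{eq:xiorder3equation} is equivalent to the pair of scalar conditions $\myTr(\xi) = -1$ and $\myNrd(\xi) = 1$. Substituting the explicit formulas $\myTr(\xi) = 2x_1 - x_2$ and $\myNrd(\xi) = x_1^2 - x_1 x_2 + x_2^2 - d(x_3^2 - x_3 x_4 + x_4^2)$ from Section~\ref{sec:preliminaries}, the trace condition rearranges to $x_2 = 2x_1 + 1$, and plugging this into the norm condition collapses the Eisenstein quadratic part to $3x_1^2 + 3x_1 + 1$, yielding exactly~\eqref{eq:xiorder3cond}. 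Reading the same calculation backwards handles the converse, so both coordinate conditions together are equivalent to \eqref{eq:xiorder3equation}, completing the lemma.
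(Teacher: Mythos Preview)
Your proof is correct, and the second half (translating $\xi^2+\xi+1=0$ into the trace and norm conditions, then into the coordinate equations) is essentially the same as the paper's.

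The first half differs in approach. You argue structurally: since $\xi\notin\Q\cdot1$, its minimal polynomial over~$\Q$ has degree~$2$ and must divide $X^3-1$, so it is $X^2+X+1$. The paper instead works directly: it shows $\myNrd(1-\xi)\ne0$ by a coordinate calculation (assuming $\myNrd(1-\xi)=0$ forces $x_2=2x_1-2$, and then comparing $\xi^2$ with $\xi^{-1}$ leads to a contradiction), so $1-\xi$ is invertible in~$\cH$ and the factor can be cancelled from $(\xi-1)(\xi^2+\xi+1)=0$. Your remark that one ``cannot'' cancel $\xi-1$ is thus slightly off the mark: the paper does exactly that, but justifies invertibility first. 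Your route is cleaner and coordinate-free, avoiding the small case analysis; the paper's route is more elementary in that it does not invoke the general fact that non-central elements of a quaternion algebra have quadratic minimal polynomial. Both are perfectly valid.
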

    \begin{proof}From $\xi^3=1$ we see that $\myNrd(\xi)=1$. Now $\myNrd(1-\xi)\ne0$, for otherwise we find that
      $x_2=2x_1-2$, and then equating $\xi^2$ and~$\xi^{-1}$ we find that $x_3=x_4=0$, and quickly get a contradiction.
      So $1-\xi$ is invertible, and so \eqref{eq:xiorder3equation}~holds. For any $\xi\in\cH$ we have $\xi^2-\myTr(\xi)\xi+\myNrd(\xi)=0$. So \eqref{eq:xiorder3equation}
      holds if and only if $-(\myTr(\xi)+1)\xi+\myNrd(\xi)-1=0$, which holds if and only if $\myTr(\xi)=-1$
    and $\myNrd(\xi)=1$.
    \end{proof}
    %
  \begin{lemma}\label{lem:xdashformula}If $\xi=x+y\phi\in\cH$, $\alpha=a+b\phi\in\cH$
    and $\myNrd(\alpha)=C\ne0$, then $\alpha\xi\alpha^{-1}=x'+y'\phi$ for
    $x'=x+dC^{-1}\bigl(b\bar b(x-\bar x)+\bar ab\bar y-a\bar by\bigr)$.
   In particular, if $\xi\in\cO$ and $\alpha\in\cO^\times$, then $x'-x\in d\mathfrak o$.
  \end{lemma}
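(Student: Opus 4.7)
The plan is a direct computation using only the defining relations $\phi^2 = d$ and $\phi x = \bar x\phi$ (for $x \in \Q(\myj)$), together with the explicit inverse formula $\alpha^{-1} = C^{-1}(\bar a - b\phi)$ recorded in Section~\ref{sec:preliminaries}. There is no conceptual obstacle here; the only point requiring care is the bookkeeping when $\phi$ is moved past elements of $\Q(\myj)$, which switches them to their conjugates.

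First I would expand $\alpha\xi = (a+b\phi)(x+y\phi)$ by pushing every $\phi$ to the right. The relation $\phi x = \bar x \phi$ turns $b\phi x$ into $b\bar x\phi$, and $\phi y\phi = d\bar y$, so the product lands in the standard form $\alpha\xi = (ax + db\bar y) + (ay + b\bar x)\phi$.

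Next I would right-multiply by $\alpha^{-1} = C^{-1}(\bar a - b\phi)$ and again gather the result as $x' + y'\phi$. Only the $\Q(\myj)$-component matters for the stated formula: it receives $\bar a(ax + db\bar y)$ from the first factor, and, after commuting $\phi$ past $(ay + b\bar x)$ using the same rules (which produces the factor $d\bar b$), picks up $-d\bar b(ay + b\bar x)$ from the second. Substituting $a\bar a = C + db\bar b$ (which is just the definition $C = a\bar a - db\bar b$) into the resulting $a\bar a \cdot x$ term and then dividing by $C$ produces the claimed formula for $x'$ on the nose.

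Finally, for the divisibility assertion, when $\alpha \in \cO^\times$ one has $C \in \{-1,1\}$, so $C^{-1} \in \Z$. Since $a, b, x, y$ all lie in $\mathfrak o$ and $\mathfrak o$ is closed under conjugation, the bracketed quantity $b\bar b(x-\bar x) + \bar a b\bar y - a\bar b y$ belongs to $\mathfrak o$. Hence $x' - x$ equals $d$ times an element of $\mathfrak o$, i.e.\ lies in $d\mathfrak o$, as required.
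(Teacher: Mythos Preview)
Your argument is correct and is exactly the routine computation the paper has in mind: expand $\alpha\xi$, right-multiply by $\alpha^{-1}=C^{-1}(\bar a-b\phi)$, and simplify the $\Q(\myj)$-component using $a\bar a=C+db\bar b$. (One small wording slip: in the second step the $\phi$ is commuted past $-b$, not past $ay+b\bar x$, but the resulting term $-d\bar b(ay+b\bar x)$ is correct.)
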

    \begin{proof}This is a routine calculation, using $a\bar a-db\bar b=C$ and $\alpha^{-1}=C^{-1}(\bar a-b\phi)$.
    \end{proof}
  \begin{corollary}\label{cor:ddashddoubledashinvariant}Suppose that $\xi=x_1+x_2\myj+x_3\phi+x_4\myj\phi\in\cO$ has order~3.
    Let $d_\xi'=\gcd(x_1,d)$ and $d_\xi''=\gcd(x_1+1,d)$. Then $\gcd(d_\xi',d_\xi'')=1$. If $\eta=\alpha\xi\alpha^{-1}$
    for some $\alpha\in\cO^\times$, then $d_\eta'=d_\xi'$
    and $d_\eta''=d_\xi''$.
  \end{corollary}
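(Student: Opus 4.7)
The first assertion is immediate: since $\gcd(x_1,x_1+1)=1$, any common divisor of $d_\xi'=\gcd(x_1,d)$ and $d_\xi''=\gcd(x_1+1,d)$ divides $\gcd(x_1,x_1+1)=1$. So the real content is the invariance under $\cO^\times$-conjugation.

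For the invariance, the plan is to feed Lemma~\ref{lem:xdashformula} directly. Write $\xi=x+y\phi$ with $x=x_1+x_2\myj\in\mathfrak o$ and $y=x_3+x_4\myj\in\mathfrak o$, and similarly $\eta=\alpha\xi\alpha^{-1}=x'+y'\phi$. The lemma tells us that $x'-x\in d\mathfrak o$, i.e.\ there exist integers $g_1,g_2\in\Z$ with $x'-x=d(g_1+g_2\myj)$. Comparing $\Z$-basis coefficients with respect to $\{1,\myj\}$ yields $x_1'=x_1+dg_1$, so $x_1'\equiv x_1\pmod d$ as an integer congruence.

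From $x_1'\equiv x_1\pmod d$ one gets $\gcd(x_1',d)=\gcd(x_1,d)$ and $\gcd(x_1'+1,d)=\gcd(x_1+1,d)$, which are exactly the two equalities $d_\eta'=d_\xi'$ and $d_\eta''=d_\xi''$.

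There is no real obstacle here; the work was already done in Lemma~\ref{lem:xdashformula}, and the only thing to be careful about is that when one writes the element $x'-x$ of $d\mathfrak o$ in the basis $\{1,\myj\}$, the coefficients one obtains are genuine integer multiples of $d$ (not merely elements of $\mathfrak o$ divisible by $d$ in some weaker sense), so that the congruences $x_1'\equiv x_1\pmod d$ and $x_1'+1\equiv x_1+1\pmod d$ hold in $\Z$ rather than just in $\mathfrak o$.
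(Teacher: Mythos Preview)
Your proof is correct and follows essentially the same route as the paper: both arguments invoke Lemma~\ref{lem:xdashformula} to get $x'-x\in d\mathfrak o$, extract the $\Z$-coefficient of~$1$ to obtain $x_1'\equiv x_1\pmod d$, and conclude the two gcd equalities. Your extra remark about the basis coefficients being honest integer multiples of~$d$ is just making explicit what the paper takes for granted.
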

  \begin{proof}Any common divisor $n\ge1$ of $d_\xi'$ and $d_\xi''$ divides both $x_1$ and $x_1+1$ and so is~1.
    If $\eta=\alpha\xi\alpha^{-1}$, write $\eta=y_1+y_2\myj+y_3\phi+y_4\myj\phi$. Then Lemma~\ref{lem:xdashformula} shows that $y_1=x_1+d\Delta$ for some integer~$\Delta$.
    So $\gcd(y_1,d)=\gcd(x_1,d)$, and $y_1+1=x_1+1+d\Delta$, shows that $\gcd(y_1+1,d)=\gcd(x_1+1,d)$ too.
  \end{proof}

    \begin{remark}\label{rem:effectofsquaring}(a)
    Suppose that $\xi=x_1+x_2\myj+x_3\phi+x_4\myj\phi\in\cO$ has order~3, and write $\xi^2=\eta=y_1+y_2\myj+y_3\phi+y_4\myj\phi$.
    Then $(d_\eta',d_\eta'')=(d_\xi'',d_\xi')$. This is immediate from~\eqref{eq:xiorder3equation},
    which implies that $y_1=-(x_1+1)$ (and $y_1+1=-x_1$).

    \noindent(b)
      Since $(d_{\xi^2}',d_{\xi^2}'')=(d_{\xi}'',d_{\xi}')$, for any subgroup of~$\cO^\times$ of order~3
      we can, if $d\ne1$, choose its generator~$\xi$ so that $d_\xi'<d_{\xi^2}'$. Then two subgroups are conjugate 
      if and only if their generators are conjugate. So
  the number of conjugation classes $C_d'$ under $\cO^\times$ of order~3 subgroups of $\cO^\times$ 
  is $C_d'=\frac{1}{2}C_d$ if $d\neq 1$, and $C_1'=1$.
  \end{remark}

    \begin{corollary}\label{cor:ddashddoubledashvalues}Suppose that $\xi=x_1+x_2\myj+x_3\phi+x_4\myj\phi\in\cO$ has order~3.
    Let $d_\xi'$ and $d_\xi''$ be as above. Then $d_\xi'd_\xi''$ equals either~$d$ or $d/3$. 
  \end{corollary}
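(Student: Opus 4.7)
The plan is to exploit the identity $3x_1(x_1+1)=d(x_3^2-x_3x_4+x_4^2)$ from Lemma~\ref{lem:xiorder3conds}, and argue one prime at a time. We already know from Corollary~\ref{cor:ddashddoubledashinvariant} that $d_\xi'$ and $d_\xi''$ are coprime divisors of~$d$, so the product $d_\xi'd_\xi''$ automatically divides~$d$, and it suffices to show that the quotient $d/(d_\xi'd_\xi'')$ is either $1$ or~$3$.

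Fix a prime $p\mid d$ and set $m=v_p(d)$. Since $x_1$ and $x_1+1$ are coprime, at most one of them is divisible by~$p$, and
\begin{displaymath}
  v_p(d_\xi'd_\xi'')=\min\bigl(v_p(x_1),m\bigr)+\min\bigl(v_p(x_1+1),m\bigr).
\end{displaymath}
If $p\ne3$, the identity gives $p^m\mid x_1(x_1+1)$, so $p^m$ divides whichever of $x_1,x_1+1$ it divides at all; hence $v_p(d_\xi'd_\xi'')=m=v_p(d)$. If $p=3$, the identity only yields $3^{m-1}\mid x_1(x_1+1)$, because the factor $3$ on the left absorbs one power of~$3$. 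A short case check---whether $3\mid x_1$, $3\mid x_1+1$, or neither (the last being consistent with the relation only when $m\le 1$)---shows that the $3$-part of $d_\xi'd_\xi''$ is $3^m$ or $3^{m-1}$.

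Combining the local contributions, $d/(d_\xi'd_\xi'')$ is trivial at every prime other than~$3$ and equal to $1$ or~$3$ at the prime~$3$, so $d_\xi'd_\xi''\in\{d,d/3\}$ as claimed (and the ``$d/3$'' alternative requires $3\mid d$). There is no substantive obstacle here; the only subtle point is the asymmetric role of~$3$, which is forced on us by the explicit factor of~$3$ in Lemma~\ref{lem:xiorder3conds} and which is precisely what will produce the dichotomy in the statements of Theorem~\ref{thm:Cdcount}(c).
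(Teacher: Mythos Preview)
Your argument is correct and follows essentially the same route as the paper's proof: both use the identity $3x_1(x_1+1)=d(x_3^2-x_3x_4+x_4^2)$ together with $\gcd(x_1,x_1+1)=1$ to sandwich $d_\xi'd_\xi''$ between $d/3$ (or $d$, when $3\nmid d$) and~$d$. The only difference is cosmetic---the paper introduces $d^*=d/3$ or~$d$ and argues globally that $d^*\mid d_\xi'd_\xi''\mid d$, whereas you carry out the equivalent computation one prime valuation at a time.
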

    \begin{proof} Let $d^*=d/3$ if $d$ is divisible by~3, and otherwise let $d^*=d$. Then \eqref{eq:xiorder3cond} shows that $d^*$ divides~$x_1(x_1+1)$.
        Since $\gcd(x_1,x_1+1)=1$, this implies that $d^*=\gcd(x_1,d^*)\gcd(x_1+1,d^*)$, which divides $\gcd(x_1,d)\gcd(x_1+1,d)=d_\xi'd_\xi''$.
As $\gcd(d_\xi',d_\xi'')=1$ and both $d_\xi'$ and $d_\xi''$ divide~$d$, their product $d_\xi'd_\xi''$ divides~$d$.
  \end{proof}
We shall frequently use the following simple fact:
   \begin{lemma}\label{lem:oneminusomegafactor}Suppose that $\xi=x_1+x_2\myj+x_3\phi+x_4\myj\phi\in\cO$ has order~3
    and assume that
      $x_3^2-x_3x_4+x_4^2$ is divisible by~3.
    Then $x_3+x_4$ and $2x_3+x_4$ are also divisible by~3, so that
    \begin{equation}\label{eq:tildexformulas}
      \tilde x_3=\frac{2x_3-x_4}{3}\quad\text{and}\quad \tilde x_4=\frac{x_3+x_4}{3}
    \end{equation}
    are integers. Moreover,
    \begin{equation}\label{eq:tildexequation}
      d({\tilde x_3}^2-\tilde x_3\tilde x_4+{\tilde x_4}^2)=x_1(x_1+1).
    \end{equation}
  \end{lemma}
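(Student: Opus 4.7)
The plan is to recognise the quadratic form $x_3^2-x_3x_4+x_4^2$ as the norm form of $\mathfrak o=\Z[\myj]$: it is precisely $N(x)$ for $x=x_3+x_4\myj$. The divisibility hypothesis is then the assertion that $3\mid N(x)$. Since $3$ ramifies in $\mathfrak o$ via $3=(1-\myj)(1-\bar\myj)$ with $(1-\myj)$ and $(1-\bar\myj)$ generating the same prime ideal $\mathfrak p$ above $3$, the relation $\mathfrak p^2=(3)$ forces $\mathfrak p\mid(x)$, i.e.\ $x\in(1-\myj)\mathfrak o$. Writing $x=(1-\myj)\tilde x$ with $\tilde x=\tilde x_3+\tilde x_4\myj\in\mathfrak o$ and expanding, I will read off $x_3=\tilde x_3+\tilde x_4$ and $x_4=2\tilde x_4-\tilde x_3$, and invert these relations to recover exactly the formulas~\eqref{eq:tildexformulas}. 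The divisibility claims $3\mid x_3+x_4$ and $3\mid 2x_3-x_4$ then fall out of these formulas (equivalently, $3\mid 2x_3-x_4$ follows from $3\mid x_3+x_4$ via the identity $2x_3-x_4=3x_3-(x_3+x_4)$).

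A more elementary route that avoids even invoking ramification is to reduce mod~$3$: $x_3^2-x_3x_4+x_4^2\equiv(x_3+x_4)^2\pmod 3$, which immediately yields $3\mid x_3+x_4$ from the hypothesis, and then $3\mid 2x_3-x_4$ as above. Either way one sees that $\tilde x_3,\tilde x_4$ are well-defined integers.

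For \eqref{eq:tildexequation} the computation is transparent from the factorisation: $N(x)=N(1-\myj)\,N(\tilde x)=3(\tilde x_3^2-\tilde x_3\tilde x_4+\tilde x_4^2)$. Substituting this into the identity $3x_1(x_1+1)=d(x_3^2-x_3x_4+x_4^2)$ supplied by Lemma~\ref{lem:xiorder3conds} and cancelling the common factor of~$3$ yields \eqref{eq:tildexequation}. One can verify the key identity $x_3^2-x_3x_4+x_4^2=3(\tilde x_3^2-\tilde x_3\tilde x_4+\tilde x_4^2)$ purely algebraically by direct substitution of \eqref{eq:tildexformulas} if one prefers not to appeal to the norm formalism.

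There is no real obstacle here; the only thing to be careful about is checking that the two expressions of \eqref{eq:tildexformulas} are the correct inversion of the relations $x_3=\tilde x_3+\tilde x_4$, $x_4=2\tilde x_4-\tilde x_3$ coming from multiplication by $1-\myj$, and that the norm of $1-\myj$ is indeed $3$. Both are immediate, so the lemma is a short calculation once the link with the ring $\mathfrak o$ is made explicit.
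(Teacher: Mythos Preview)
Your proof is correct. Your ``more elementary route'' is exactly the paper's argument: the paper writes $x_3^2-x_3x_4+x_4^2=(x_3+x_4)^2-3x_3x_4$, reads off $3\mid x_3+x_4$, deduces $3\mid 2x_3-x_4$ from $2x_3-x_4=3x_3-(x_3+x_4)$, and then notes that \eqref{eq:tildexequation} follows from~\eqref{eq:xiorder3cond}. Your norm-form framing via $x_3+x_4\myj=(1-\myj)(\tilde x_3+\tilde x_4\myj)$ is an addition not in the paper; it is a pleasant conceptual explanation of \emph{why} the change of variables \eqref{eq:tildexformulas} is the natural one (it is just division by the ramified prime $1-\myj$ in~$\mathfrak o$), and it makes the factor-of-$3$ identity $x_3^2-x_3x_4+x_4^2=3(\tilde x_3^2-\tilde x_3\tilde x_4+\tilde x_4^2)$ transparent as multiplicativity of the norm. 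The paper simply leaves this last identity implicit in its one-line appeal to~\eqref{eq:xiorder3cond}.
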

  \begin{proof}From $x_3^2-x_3x_4+x_4=(x_3+x_4)^2-3x_3x_4$, the hypothesis shows 
    that 3 divides~$x_3+x_4$, and therefore 3 also divides $3x_3-(x_3+x_4)=2x_3-x_4$. So
    \eqref{eq:tildexequation} follows from~\eqref{eq:xiorder3cond}.
  \end{proof}

  In studying whether or not the order $3$ elements $\xi=x_1+x_2\myj+x_3\phi+x_4\myj\phi\in\cO$ and $\eta=y_1+y_2\myj+y_3\phi+y_4\myj\phi\in\cO$ are
  conjugate by an $\alpha\in\cO^\times$, we shall repeatedly follow the same method, which we now summarize. Assume that $x_1+y_1+1\ne0$.
  For any $\alpha=a_1+a_2\myj+a_3\phi+a_4\myj\phi\in\cH$, $\eta\alpha=\alpha\xi$ if and only if
\begin{equation}\label{eq:a3a4generalformulas}
a_3=\frac{m_{31}a_1+m_{32}a_2}{3(x_1+y_1+1)}\quad\text{and}\quad
a_4=\frac{m_{41}a_1+m_{42}a_2}{3(x_1+y_1+1)},
      \end{equation}
      where
      \begin{displaymath}
        m_{31}=-x_3+2x_4+y_3-2y_4,\quad m_{32}=2x_3-x_4+y_3+y_4,
      \end{displaymath}
      and
      \begin{displaymath}
m_{41}=-2x_3+x_4+2y_3-y_4,\quad m_{42}=x_3+x_4-y_3+2y_4.
      \end{displaymath}
      The fact that $\eta\alpha=\alpha\xi$ implies that \eqref{eq:a3a4generalformulas} follows immediately from
      $x_2=2x_1+1$ and $y_2=2x_1+1$. The converse follows easily using~\eqref{eq:xiorder3cond}.

      We need to know when we can pick $\alpha\in\cO^\times$. Starting from any integers $a_1$ and~$a_2$, defining
      rational numbers $a_3$ and~$a_4$ using~\eqref{eq:a3a4generalformulas}, and forming $\alpha=a_1+a_2\myj+a_3\phi+a_4\myj\phi\in\cH$,
      we see that
      \begin{displaymath}
        \myNrd(\alpha)=a_1^2-a_1a_2+a_2^2-d(a_3^2-a_3a_4+a_4^2)=A_0a_1^2+B_0a_1a_2+C_0a_2^2,
      \end{displaymath}
      where
      \begin{displaymath}
        \begin{aligned}
 A_0&=(9S^2-d(m_{31}^2-m_{31}m_{41}+m_{41}^2))/(9S^2),\\
B_0&=-(9S^2+d(2m_{31}m_{32}-m_{31}m_{42}-m_{32}m_{41}+2m_{41}m_{42}))/(9S^2),\\
C_0&=(9S^2-d(m_{32}^2-m_{32}m_{42}+m_{42}^2))/(9S^2),\\
          \end{aligned}
      \end{displaymath}
      and $S=x_1+y_1+1$. A routine calculation shows that
      \begin{equation}\label{eq:discr0}
        B_0^2-4A_0C_0=\frac{-3}{(x_1+y_1+1)^2}.
      \end{equation}
      Looking at the first equation in~\eqref{eq:a3a4generalformulas}, we find that to ensure that $a_3$ is an integer,
      it is necessary and sufficient to have
      \begin{equation}\label{eq:umatrix}
                a_1=u_{11}m+u_{12}n,\quad\text{and}\quad
        a_2=u_{21}m+u_{22}n,
              \end{equation}
      for certain integer constants~$u_{ij}$ and arbitrary integers~$m$ and~$n$. Substituting
      the equations~\eqref{eq:umatrix} into $A_0a_1^2+B_0a_1a_2+C_0a_2^2$ we see that $\myNrd(\alpha)$
      is a binary quadratic form $A_1m^2+B_1mn+C_1n^2$ in~$m$ and~$n$, with discriminant
      \begin{displaymath}
        B_1^2-4A_1C_1=(B_0^2-4A_0C_0)(u_{11}u_{22}-u_{12}u_{21})^2=\frac{-3(u_{11}u_{22}-u_{12}u_{21})^2}{(x_1+y_1+1)^2}.
      \end{displaymath}
      Now looking at the second equation in~\eqref{eq:a3a4generalformulas}, we similarly find that to ensure that $a_4$ is an integer,
      it is necessary and sufficient to have
      \begin{equation}\label{eq:vmatrix}
        m=v_{11}k+v_{12}\ell,\quad\text{and}\quad n=v_{21}k+v_{22}\ell,
      \end{equation}
      for certain integer constants~$v_{ij}$ and arbitrary integers~$k$ and~$\ell$. Substituting
      the equations~\eqref{eq:vmatrix} into $A_1m^2+B_1mn+C_1n^2$ we see that $\myNrd(\alpha)$
      is a binary quadratic form $Ak^2+Bk\ell+C\ell^2$ in~$k$ and~$\ell$, with discriminant
      \begin{displaymath}
        B^2-4AC=(B_1^2-4A_1C_1)(v_{11}v_{22}-v_{12}v_{21})^2.
      \end{displaymath}
      So if we can show that  $(u_{11}u_{22}-u_{12}u_{21})(v_{11}v_{22}-v_{12}v_{21})=\pm(x_1+y_1+1)$,
      we have $B^2-4AC=-3$. Then $4AC=B^2+3$ shows that $A$ and~$C$ have the same sign. Let
      $\epsilon=1$ or~$-1$ according as $A,C>0$ or $A,C<0$. Then $\epsilon Ak^2+\epsilon Bk\ell+\epsilon C\ell^2$
      is a positive binary form having discriminant~$-3$. By \cite[p.~167]{nivenzuckerman}, there
      are integers $k$ and~$\ell$ so that $\epsilon(Ak^2+Bk\ell+C\ell^2)=1$. So defining $a_1,\ldots,a_4$ using
      these $k$ and~$\ell$, we have $\alpha\in\cO^\times$ and $\myNrd(\alpha)=\epsilon$.     
  
      If $\xi=x_1+x_2\myj+x_3\phi+x_4\myj\phi$ has order~3, let us define
        \begin{equation}\label{eq:xistardefn}
          \xi^*=\phi \xi^2\phi^{-1}.
        \end{equation}
        Then
 $\xi^*=x_1+x_2\myj+(x_4-x_3)\phi+x_4\myj\phi$ has order~3, and $(d_{\xi^*}',d_{\xi^*}'')=(d_\xi',d_\xi'')$.

   \begin{proposition}\label{prop:xistarconjugatetoxi}Let $\xi=x_1+x_2\myj+x_3\phi+x_4\myj\phi$ have order~3. Define $(d_\xi',d_\xi'')$
      as in Corollary~\ref{cor:ddashddoubledashinvariant} and $\xi^*$ as in~\eqref{eq:xistardefn}.
    Then the following are equivalent:
    \begin{itemize}
    \item[(a)] $d_\xi'd_\xi''=d$,
    \item[(b)] $x_3^2-x_3x_4+x_4^2$ is divisible by~3,
    \item[(c)] $\xi^*$ is $\cO^\times$-conjugate to~$\xi$.
    \end{itemize}
    These conditions always hold if $d$ is not divisible by~3 or if $d=3\tilde d$, where $\tilde d\equiv1$ (mod~3).
  \end{proposition}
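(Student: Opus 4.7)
The overall plan is to first establish (a)$\iff$(b) by a 3-adic analysis of the order~3 identity from Lemma~\ref{lem:xiorder3conds}, then prove (b)$\iff$(c) by a direct application of the general conjugation framework developed just before the proposition, with $\eta=\xi^*$.

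For (a)$\iff$(b), I start from the identity $3x_1(x_1+1)=d(x_3^2-x_3x_4+x_4^2)=dN$. At any prime $p\neq 3$ dividing $d$, the argument used in Corollary~\ref{cor:ddashddoubledashvalues} shows unconditionally that $v_p(d_\xi'd_\xi'')=v_p(d)$, since $v_p(x_1(x_1+1))=v_p(d)+v_p(N)\geq v_p(d)$ and exactly one of $v_p(x_1),v_p(x_1+1)$ is nonzero. At $p=3$ one has $v_3(d_\xi'd_\xi'')=\min(v_3(x_1(x_1+1)),v_3(d))$, while the identity yields $v_3(x_1(x_1+1))=v_3(d)-1+v_3(N)$; thus (a) is equivalent to $v_3(N)\geq 1$, which is (b). For the final clause: when $3\nmid d$ the identity immediately forces $3\mid N$; when $d=3\tilde d$ with $\tilde d\equiv 1\pmod 3$, reducing the identity mod~3 gives $N\equiv x_1(x_1+1)\pmod 3$, but $N$ is Loeschian so $N\in\{0,1\}\pmod 3$ while $x_1(x_1+1)\in\{0,2\}\pmod 3$, forcing $N\equiv 0\pmod 3$.

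For (b)$\iff$(c), I apply the framework from the excerpt with $(y_1,y_2,y_3,y_4)=(x_1,x_2,x_4-x_3,x_4)$, giving $S=x_1+y_1+1=x_2\neq 0$ and, after a direct calculation,
\begin{displaymath}
  m_{31}=-2x_3+x_4,\quad m_{32}=x_3+x_4,\quad m_{41}=2m_{31},\quad m_{42}=2m_{32}.
\end{displaymath}
Since $m_{4i}=2m_{3i}$, the formula for $a_4$ gives $a_4=2a_3$ automatically, so no second parametrization is needed; one has $\myNrd(\alpha)=a_1^2-a_1a_2+a_2^2-3da_3^2$ subject to the single integrality condition $m_{31}a_1+m_{32}a_2\equiv 0\pmod{3x_2}$. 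For (c)$\Rightarrow$(b) by contraposition, if (b) fails then $m_{31}\equiv m_{32}\equiv x_3+x_4\not\equiv 0\pmod 3$ (since $(x_3+x_4)^2-N=3x_3x_4$). Reducing the integrality condition mod~3 forces $a_1+a_2\equiv 0\pmod 3$, whence $a_1^2-a_1a_2+a_2^2\equiv 3a_1^2\equiv 0\pmod 3$, so $\myNrd(\alpha)\equiv 0\pmod 3$, incompatible with $\myNrd(\alpha)=\pm 1$.

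For (b)$\Rightarrow$(c), Lemma~\ref{lem:oneminusomegafactor} produces integers $\tilde x_3,\tilde x_4$ with $m_{31}=-3\tilde x_3$, $m_{32}=3\tilde x_4$, and $d(\tilde x_3^2-\tilde x_3\tilde x_4+\tilde x_4^2)=x_1(x_1+1)$. The integrality condition simplifies to $\tilde x_3a_1-\tilde x_4a_2\equiv 0\pmod{x_2}$, and the central arithmetic step is to prove $\gcd(\tilde x_3,\tilde x_4,x_2)=1$: any common divisor $r$ would divide both $x_2=2x_1+1$ and $d(\tilde x_3^2-\tilde x_3\tilde x_4+\tilde x_4^2)=x_1(x_1+1)$, but $\gcd(x_2,x_1)=\gcd(x_2,x_1+1)=1$ forces $r=1$. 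Hence the sublattice of $(a_1,a_2)\in\Z^2$ making $a_3$ integral has index exactly $x_2$, so $|u_{11}u_{22}-u_{12}u_{21}|=x_2$; combined with the trivial parametrization for $a_4$, the framework produces a binary quadratic form in $(k,\ell)$ of discriminant $-3x_2^2/x_2^2=-3$. By the Niven--Zuckerman argument recalled in the excerpt, this form represents $\pm 1$, yielding $\alpha\in\cO^\times$ with $\xi^*=\alpha\xi\alpha^{-1}$. The main obstacle is the verification that the conjugation framework produces a form of discriminant exactly $-3$; this reduces to the coprimality $\gcd(\tilde x_3,\tilde x_4,x_2)=1$, which is the arithmetic heart of the equivalence (b)$\iff$(c).
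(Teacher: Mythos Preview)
Your proof is correct and follows essentially the same route as the paper's. For (a)$\iff$(b) you use $p$-adic valuations where the paper argues by direct factorization, and for (b)$\Rightarrow$(c) you phrase the key coprimality as $\gcd(\tilde x_3,\tilde x_4,x_2)=1$ and invoke a lattice-index count where the paper writes out the explicit parametrization via $\tilde g=\gcd(\tilde x_3,\tilde x_4)$; these are the same arguments in different dress. Your handling of (c)$\Rightarrow$(b) is somewhat slicker than the paper's: the paper fully parametrizes the solutions and computes the coefficients $A,B,C$ of the resulting binary form, showing each is divisible by~3, whereas you simply reduce the single integrality constraint modulo~3 to force $a_1+a_2\equiv0\pmod3$ and hence $\myNrd(\alpha)\equiv0\pmod3$. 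Both reach the same conclusion; your shortcut avoids the explicit computation but relies on the same underlying observation that $m_{31}\equiv m_{32}\equiv x_3+x_4\not\equiv0\pmod3$ when (b) fails.
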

  \begin{proof}Suppose that (a)~holds. Now $d_\xi'$ divides~$x_1$ and $d_\xi''$ divides~$x_1+1$. Writing $x_1=d_\xi'u$ and $x_1+1=d_\xi''v$,
    we have $d(x_3^2-x_3x_4+x_4^2)=3x_1(x_1+1)=3d_\xi'd_\xi''uv=3duv$. So $x_3^2-x_3x_4+x_4^2=3uv$ is divisible by~3.
    Conversely, if (b) holds, then (a) holds if $d$ is not divisible by~3. If $3^k\|d$ for some $k\ge1$, then
    (b) shows that $3^{k+1}$ divides~$d(x_3^2-x_3x_4+x_4^2)=3x_1(x_1+1)$, so that $3^k$ divides~$x_1$ (and so $d_\xi'$)
    or $3^k$ divides $x_1+1$ (and so $d_\xi''$). So $3^k$ divides $d_\xi'd_\xi''$, and therefore $d_\xi'd_\xi''$ cannot equal~$d/3$. So (a)~holds.

    By Corollary~\ref{cor:ddashddoubledashvalues}, $d_\xi'd_\xi''=d$ or $d/3$, and of course $d_\xi'd_\xi''=d$ must
    hold if $d$ is not divisible by~3. When $d=3\tilde d$ and $\tilde d=1$ (mod~3), then
    $x_1(x_1+1)=\tilde d(x_3^2-x_3x_4+x_4^2)=\tilde d((x_3+x_4)^2-3x_3x_4)$ is
    congruent to~0 (mod~3) if 3 divides~$x_3^2-x_3x_4+x_4^2$,
    and is congruent to~1 (mod~3) otherwise. The latter case is impossible,
    as $x_1(x_1+1)$ is congruent to~0 (mod~3) if $x_1\equiv0$ or $x_1\equiv2$ (mod~3), and
    congruent to~2 (mod~3) if $x_1\equiv1$ (mod~3). So $x_3^2-x_3x_4+x_4^2$ must be divisible by~3, and (b)~holds.

    Now suppose that (c) holds, but (b) does not hold. Then there an $\alpha=a_1+a_2\myj+a_3\phi+a_4\myj\phi\in\cO^\times$ such that
      $\xi^*\alpha=\alpha\xi$. Then \eqref{eq:a3a4generalformulas} becomes
      \begin{equation}\label{eq:a3formula}
    a_3=\frac{m_1a_1+m_2a_2}{3(2x_1+1)}\quad\text{and}\quad a_4=2a_3,
      \end{equation}
      where $m_1=x_4-2x_3$ and $m_2=x_3+x_4$.
      Now 3 cannnot divide $m_1$ or~$m_2$, because otherwise $x_3^2-x_3x_4+x_4^2=(x_3+x_4)^2-3x_3x_4$ is
      divisible by~3, so that (b)~holds. In particular, $m_1,m_2\ne0$.
      Let $g_3=\gcd(m_1,m_2)$, write $m_1=g_3m_1'$ and $m_2=g_3m_2'$, and
      choose integers $z$ and~$w$ so that $m_1z+m_2w=g_3$.
      
      Now $\gcd(g_3,3(2x_1+1))=1$. For if a positive integer~$n$ divides $m_1$ and~$m_2$, then
        $n$ is not divisible by~3, as we have seen, and $n$ divides $m_2-m_1=3x_3$ and $m_1+2m_2=3x_4$.
        Thus $n$ divides both $x_3$ and $x_4$ and so $d(x_3^2-x_3x_4+x_4^2)=3x_1(x_1+1)$.
        So $n$ divides $4x_1(x_1+1)=(2x_1+1)^2-1$, and so cannot divide~$2x_1+1$ unless $n=1$.
        This argument also shows that $g_3=\gcd(x_3,x_4)$.

        Now $3(2x_1+1)a_3=g_3(m_1'a_1+m_2'a_2)$ and $\gcd(g_3,3(2x_1+1))=1$ shows that
        $a_3$ must be a multiple $g_3m$ of~$g_3$.  If integers $a_1,a_2$ satisfy
        $3(2x_1+1)m=m_1'a_1+m_2'a_2$, then subtracting the
        equation $3(2x_1+1)m=m_1'(3(2x_1+1)zm)+m_2'(3(2x_1+1)wm)$, we obtain
        \begin{displaymath}
       m_1'(a_1-3(2x_1+1)zm)+m_2'(a_2-3(2x_1+1)wm)=0,
        \end{displaymath}
        and using $\gcd(m_1',m_2')=1$ we see that there must be an integer~$n$ so that
        \begin{equation}\label{eq:solvelineareqtn}
             a_1=3(2x_1+1)zm+m_2'n\quad\text{and}\quad
             a_2=3(2x_1+1)wm-m_1'n.
           \end{equation}
        Taking these $a_1$ and $a_2$ and setting $a_3=(m_1a_1+m_2a_2)/(3(2x_1+1))$ and $a_4=2a_3$,
        we see that $a_3=g_3m$, and so $\myNrd(\alpha)$ equals $Am^2+Bmn+Cn^2$ for
        certain integers~$A$, $B$ and~$C$. We find that
               \begin{displaymath}
          \begin{aligned}
            A&=9(2x_1+1)^2(z^2-zw+w^2)-3dg_3^2,\\
            B&=9(2x_1+1)((x_3-x_4)w+x_4z)/g_3,\\
            C&=3(x_3^2-x_3x_4+x_4^2)/g_3^2.
            \end{aligned}
               \end{displaymath}
              Clearly $A$ is divisible by~3, and
               $B$ and~$C$ are as well, because $g_3$ divides $x_3$ and~$x_4$. So $\myNrd(\alpha)$ must be a multiple of~3,
               and so $\alpha$ cannot be in~$\cO^\times$. This contradiction shows that (b) must hold.

               Now suppose that (b) holds. We show that (c)~holds. If $x_4-2x_3=0$, then $\xi^*=\xi$, and there
               is nothing to prove. So assume that $x_4-2x_3\ne0$. We seek $\alpha=a_1+a_2\myj+a_3\phi+a_4\myj\phi\in\cO^\times$ so that
               $\xi^*\alpha=\alpha\xi$. This means that \eqref{eq:a3formula} holds, and $\myNrd(\alpha)=\pm1$.
               Using Lemma~\ref{lem:oneminusomegafactor}, we see that \eqref{eq:a3formula} becomes
               \begin{equation}\label{eq:a3formulatilde}
    a_3=\frac{-\tilde x_3a_1+\tilde x_4a_2}{2x_1+1}\quad\text{and}\quad a_4=2a_3,
      \end{equation}
               where $\tilde x_3$ and $\tilde x_4$ are defined in~\eqref{eq:tildexformulas}.

               Let $\tilde g=\gcd(\tilde x_3,\tilde x_4)$. Then $\gcd(\tilde g,2x_1+1)=1$. For if a positive integer~$n$
               divides~$\tilde g$, then
               $n^2$ divides $4d({\tilde x_3}^2-\tilde x_3\tilde x_4+{\tilde x_4}^2)=4x_1(x_1+1)=(2x_1+1)^2-1$, and so $n$ cannot divide~$2x_1+1$
               unless $n=1$.
    So \eqref{eq:a3formulatilde} implies that $\tilde g$ divides~$a_3$, $a_3=\tilde gm$ say.
    Write $\tilde x_3=\tilde gx_3'$ and $\tilde x_4=\tilde gx_4'$. There are integers $z$ and~$w$ so
    that $\tilde x_3z+\tilde x_4w=\tilde g$. If $-\tilde x_3a_1+\tilde x_4a_2=\tilde gm(2x_1+1)$, then
     arguing as in the derivation of~\eqref{eq:solvelineareqtn},
     there must be an integer~$n$ so that
    \begin{displaymath}
      a_1=-mz(2x_1+1)+nx_4'\quad\text{and}\quad a_2=mw(2x_1+1)+nx_3'.
    \end{displaymath}
    This means that \eqref{eq:umatrix} holds for
    \begin{displaymath}
      \begin{pmatrix}
        u_{11}&u_{12}\\
        u_{21}&u_{22}
      \end{pmatrix}
      =\begin{pmatrix}
             -z(2x_1+1)&x_4'\\
             w(2x_1+1)&x_3'
      \end{pmatrix}.
    \end{displaymath}
    Now $u_{11}u_{22}-u_{12}u_{21}=-(2x_1+1)$, so $\myNrd(\alpha)$ is a quadratic form $A_1m^2+B_1mn+C_1n^2$
    in~$m$ and~$n$ of discriminant $-3(u_{11}u_{22}-u_{12}u_{21})^2/(2x_1+1)^2=-3$.
    By \cite[p.~167]{nivenzuckerman}, there
    are integers $m$ and~$n$ so that $A_1m^2+B_1mn+C_1n^2=\epsilon\in\{-1,1\}$. So defining
    $a_1,\ldots,a_4$ using these $m$ and~$n$,
    we have $\myNrd(\alpha)=\epsilon$, and so (c)~holds.  
  \end{proof}

  \begin{lemma}\label{lem:interchangex3x4}Let $\xi=x_1+x_2\myj+x_3\phi+x_4\myj\phi\in\cO$ have order~3.
    Then $\xi$ is conjugate to~$\eta=x_1+x_2\myj+x_4\phi+x_3j\phi$.
  \end{lemma}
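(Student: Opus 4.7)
The plan is to apply the general framework assembled between equation~\eqref{eq:a3a4generalformulas} and Proposition~\ref{prop:xistarconjugatetoxi}, which reduces the search for a conjugator $\alpha \in \cO^\times$ to representing $\pm 1$ by a definite binary form of discriminant $-3$. First, $\eta$ has order~$3$, because the reduced trace and reduced norm depend only on $x_1, x_2$ and on the symmetric expression $x_3^2 - x_3 x_4 + x_4^2$, so the conditions of Lemma~\ref{lem:xiorder3conds} are preserved under swapping $x_3$ and $x_4$. It then suffices to produce $\alpha = a_1 + a_2\myj + a_3\phi + a_4\myj\phi \in \cO^\times$ with $\eta\alpha = \alpha\xi$.

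Substituting $y_3 = x_4$ and $y_4 = x_3$ into the formulas for the $m_{ij}$ gives $m_{31} = m_{41} = 3(x_4 - x_3)$, $m_{32} = m_{42} = 3x_3$, and $S = 2x_1 + 1$, so \eqref{eq:a3a4generalformulas} collapses to the single relation
\[
a_3 = a_4 = \frac{(x_4 - x_3)\,a_1 + x_3\,a_2}{2x_1+1};
\]
in particular the second integrality substitution~\eqref{eq:vmatrix} is trivial ($v_{ij} = \delta_{ij}$). The crux is the coprimality $\gcd(g_3, 2x_1+1) = 1$, where $g_3 = \gcd(x_3, x_4)$: if a prime $n$ divides $g_3$, then $n^2$ divides $x_3^2 - x_3 x_4 + x_4^2$, hence by~\eqref{eq:xiorder3cond} $n^2 \mid 3 x_1(x_1+1)$; combined with $n \mid 2x_1+1$, the identity $4x_1(x_1+1) = (2x_1+1)^2 - 1$, and the odd-ness of $2x_1+1$, this forces $n = 1$, as in the proof of Proposition~\ref{prop:xistarconjugatetoxi}.

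Writing $x_3 = g_3 x_3'$, $x_4 = g_3 x_4'$ with $\gcd(x_3', x_4') = 1$ and choosing $z, w \in \Z$ with $(x_4' - x_3')z + x_3' w = 1$, the linear integrality condition has integer solutions parametrised as in~\eqref{eq:umatrix} with $u_{11} u_{22} - u_{12} u_{21} = -(2x_1+1)$. Plugging into $\myNrd(\alpha)$ then yields a definite binary form in $m, n$ of discriminant $-3(2x_1+1)^2/(2x_1+1)^2 = -3$, and \cite[p.~167]{nivenzuckerman} supplies $(m,n)$ with $\myNrd(\alpha) \in \{-1,1\}$, giving the desired $\alpha \in \cO^\times$. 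The only genuine obstacle is the coprimality claim above; the rest is a direct specialisation of the machinery already built in this section.
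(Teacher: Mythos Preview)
Your argument is correct and follows essentially the same route as the paper: specialise~\eqref{eq:a3a4generalformulas} to the single congruence $a_3=a_4=((x_4-x_3)a_1+x_3a_2)/(2x_1+1)$, prove $\gcd\bigl(\gcd(x_3,x_4),\,2x_1+1\bigr)=1$, and parametrise to land on a binary form of discriminant~$-3$. The paper first sets aside the trivial case $x_3=x_4$ (where $\xi=\eta$) and gives an explicit conjugator when $x_3=0$, whereas your uniform treatment absorbs $x_3=0$ but tacitly needs $(x_3,x_4)\ne(0,0)$ for $g_3=\gcd(x_3,x_4)$ and the B\'ezout step to make sense; that degenerate case is again $\xi=\eta$, so a one-line remark would close the gap.
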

  \begin{proof}There is nothing to prove if $x_3=x_4$, and so assume that $x_3\ne x_4$.
    We seek $\alpha\in\cO^\times$ such that $\eta\alpha=\alpha\xi$. In this case, \eqref{eq:a3a4generalformulas}
    becomes
    \begin{equation}\label{eq:a3formulainproof}
      a_3=\frac{a_1(x_4-x_3)+a_2x_3}{2x_1+1}\quad\text{and}\quad a_4=a_3.
    \end{equation}
    The condition $a_4=a_3$ implies that $\myNrd(\alpha)=a_1^2-a_1a_2+a_2^2-da_3^2$.

    If $x_3=0$, take $a_1=2x_1+1$, $a_2=x_1$ and $a_3=a_4=x_4$. Then \eqref{eq:a3formulainproof} holds,
  and $a_1^2-a_1a_2+a_2^2=3x_1(x_1+1)+1=dx_4^2+1=da_3^2+1$. So assume $x_3\ne0$ below.

    Let $g=\gcd(x_3,x_4)$. Then $\gcd(g,2x_1+1)=1$. For if a prime $p$ divides~$g$, then
    $p^2$ divides $d(x_3^2-x_3x_4+x_4^2)=3x_1(x_1+1)$, and so, even if $p=3$, $p$ divides
    $x_1(x_1+1)$. Since $\gcd(2x_1+1,x_1)=\gcd(2x_1+1,x_1+1)=1$, we get a contradiction.
    It follows that if \eqref{eq:a3formulainproof} holds, then $g$ divides~$a_3$, $a_3=gm$, say.
    There are integers $z$ and~$w$ so that $x_3z+x_4w=g$. Write $x_3=gx_3'$ and $x_4=gx_4'$.
    Then $w(x_4'-x_3')+(z+w)x_3'=1$, and so if $a_1(x_4-x_3)+a_2x_3=gm(2x_1+1)$, then
     arguing as in the derivation of~\eqref{eq:solvelineareqtn},
    there must be an integer~$n$ so that
    \begin{displaymath}
      a_1=mw(2x_1+1)+nx_3'\quad\text{and}\quad a_2=m(z+w)(2x_1+1)-n(x_4'-x_3'),
    \end{displaymath}
    and \eqref{eq:umatrix} holds for
    \begin{displaymath}
      \begin{pmatrix}
        u_{11}&u_{12}\\
        u_{21}&u_{22}
      \end{pmatrix}
      =
           \begin{pmatrix}
             w(2x_1+1)&x_3'\\
             (z+w)(2x_1+1)&x_3'-x_4'\\
      \end{pmatrix}
    \end{displaymath}
    Note that $u_{11}u_{22}-u_{12}u_{21}=-(2x_1+1)$. Defining $a_1$ and~$a_2$ using~\eqref{eq:umatrix}
    for these~$u_{ij}$'s, we see that $a_3$ and~$a_4$ are integer linear combinations of~$m$ and~$n$,
    and so $\myNrd(\alpha)=A_1m^2+B_1mn+C_1n^2$ for integers $A_1$, $B_1$ and $C_1$ satisfying
    \begin{displaymath}
      B_1^2-4A_1C_1=\frac{-3(u_{11}u_{22}-u_{12}u_{21})^2}{(2x_1+1)^2}=-3,
    \end{displaymath}
    and so as usual, $m$ and $n$ may be chosen so that $\alpha\in\cO^\times$.
  \end{proof}

  \begin{proposition}\label{prop:mainconjugacyresult}Suppose that $\xi,\eta\in\cO$ have order~3, and 
    that $(d_\xi',d_\xi'')=(d_\eta',d_\eta'')$. Then $\eta$ is $\cO^\times$-conjugate to either $\xi$ or~$\xi^*=\phi\xi^2\phi^{-1}$.
  \end{proposition}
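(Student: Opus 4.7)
The plan is to mirror the construction used in the proof of Proposition~\ref{prop:xistarconjugatetoxi}: attempt to produce an $\alpha\in\cO^\times$ satisfying $\eta\alpha=\alpha\xi$ using the general framework of equations~\eqref{eq:a3a4generalformulas}--\eqref{eq:vmatrix}, and if no such $\alpha$ exists, repeat the argument with $\xi^*$ in place of $\xi$. The desired $\alpha$ will come from a binary quadratic form $Ak^2+Bk\ell+C\ell^2$ of discriminant~$-3$ representing~$\pm 1$; as in the previous proofs, this last step is furnished by \cite[p.~167]{nivenzuckerman}.

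First I would dispose of the degenerate case $S:=x_1+y_1+1=0$. Here $y_1=-(x_1+1)$, so $d_\eta'=\gcd(-(x_1+1),d)=d_\xi''$ and $d_\eta''=\gcd(-x_1,d)=d_\xi'$; combined with the hypothesis, this forces $d_\xi'=d_\xi''=1$ by coprimality, whence $d\in\{1,3\}$ by Corollary~\ref{cor:ddashddoubledashvalues}. These small cases can be handled directly: for $d=1$, $\phi\in\cO^\times$ and all order~3 elements are easily shown to be $\cO^\times$-conjugate; for $d=3$, a finite check (or an application of the framework to $\xi^2$ in place of $\xi$, for which the analogous sum $y_1-x_1\ne 0$) settles the claim.

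For the main case $S\ne 0$, I would follow the procedure of Proposition~\ref{prop:xistarconjugatetoxi}: write out $m_{31},m_{32},m_{41},m_{42}$ from~\eqref{eq:a3a4generalformulas}, and use the hypothesis $(d_\xi',d_\xi'')=(d_\eta',d_\eta'')$ to compute gcds of these quantities with $3S$. Since $\gcd(x_1,d)=\gcd(y_1,d)=d_\xi'$ and $\gcd(x_1+1,d)=\gcd(y_1+1,d)=d_\xi''$, and after invoking Lemma~\ref{lem:oneminusomegafactor} when $3\mid d$, I can construct integer matrices $(u_{ij})$ and $(v_{ij})$ realizing~\eqref{eq:umatrix}--\eqref{eq:vmatrix} and compute their determinants. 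The crucial identity is $(u_{11}u_{22}-u_{12}u_{21})(v_{11}v_{22}-v_{12}v_{21})=\pm S$; once established, the general framework immediately yields $\alpha\in\cO^\times$ with $\myNrd(\alpha)=\pm 1$.

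The main obstacle will be verifying this determinant identity. The possible failure is the appearance of an unwanted factor of~$3$; by the analysis in Proposition~\ref{prop:xistarconjugatetoxi}, such a factor can only arise when $d_\xi'd_\xi''=d/3$, which is precisely the case in which $\xi$ and $\xi^*$ are not $\cO^\times$-conjugate. In that case, replacing $(x_3,x_4)$ by $(x_4-x_3,x_4)$ (i.e., $\xi$ by $\xi^*$) changes the relevant $m_{ij}$'s in a way that compensates the factor of~$3$, restoring the identity and producing an $\alpha\in\cO^\times$ that conjugates $\xi^*$ to~$\eta$. Lemma~\ref{lem:interchangex3x4} may be useful along the way, to swap $y_3$ and $y_4$ in $\eta$ so as to normalize the coordinates before the main bookkeeping.
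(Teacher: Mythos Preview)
Your outline is the paper's strategy, but it misplaces Lemma~\ref{lem:oneminusomegafactor} and underestimates the second case. The paper splits on whether $d_\xi'd_\xi''=d$ or $d_\xi'd_\xi''=d/3$. In the first case, Proposition~\ref{prop:xistarconjugatetoxi} gives $3\mid x_3^2-x_3x_4+x_4^2$ (and similarly for~$\eta$), so Lemma~\ref{lem:oneminusomegafactor} applies and rewrites each $m_{ij}$ as $3n_{ij}$ for integers~$n_{ij}$; the denominator in~\eqref{eq:a3a4generalformulas} then drops from $3S$ to~$S$, and the framework yields $B^2-4AC=-3/{h''}^2$, which forces $h''=1$ automatically since $A,B,C\in\Z$. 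Your phrase ``invoking Lemma~\ref{lem:oneminusomegafactor} when $3\mid d$'' is the wrong trigger: the lemma is used precisely in this first case, which includes all~$d$ with $3\nmid d$.

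In the second case $3\nmid x_3^2-x_3x_4+x_4^2$, the lemma is unavailable, and swapping $\xi\leftrightarrow\xi^*$ does not by itself ``restore the identity''. What the paper proves is a dichotomy: $3$ divides all four $m_{ij}$'s or all four of the analogous coefficients $h_{ij}$ attached to~$\xi^*$, but not both; after swapping so that $3\mid m_{ij}$, the framework gives $B^2-4AC=-27/{h''}^2$, so $h''\in\{1,3\}$. Showing $h''=3$ is \emph{not} automatic and requires a separate $3$-adic argument tracking the exact powers of~$3$ in $g_3$, $g_4$, $\Delta$ and $3(x_1+y_1+1)$. Your plan stops at the swap and omits this second half, which is where most of the work lies. (Also, the degenerate subcases $(m_{31},m_{32})=(0,0)$ or $(m_{41},m_{42})=(0,0)$ must be treated separately; Lemma~\ref{lem:interchangex3x4} is used there, not for a preliminary normalization of~$\eta$. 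And in your $S=0$ discussion, the case $d=3$ in fact cannot occur: it would force $d_\xi'd_\xi''=1=d/3$ with $\tilde d=1\equiv1\pmod 3$, contradicting the last sentence of Proposition~\ref{prop:xistarconjugatetoxi}.)
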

  \begin{proof}Write $\xi=x_1+x_2\myj+x_3\phi+x_4\myj\phi$, $\eta=y_1+y_2\myj+y_3\phi+y_4\myj\phi$, 
    $d'=d_\xi'=d_\eta'$ and $d''=d_\xi''=d_\eta''$. We treat the cases $d'd''=d$ and $d'd''=d/3$ separately.
    
    \smallskip\noindent{\bf 1. The case $\boldsymbol{d'd''=d}$.} Proposition~\ref{prop:xistarconjugatetoxi} shows that $\xi^*$ is conjugate to~$\xi$, and so
    we must show that $\eta$ is conjugate to~$\xi$. Proposition~\ref{prop:xistarconjugatetoxi} also shows that 3 divides $x_3^2-x_3x_4+x_4^2$
    and~$y_3^2-y_3y_4+y_4^2$.

    When $d=1$, $d'=d''=1$ must hold, and $(d_\xi',d_\xi'')=(d_\eta',d_\eta'')$ is automatically satisfied. In that
    case we'll show that $\xi$ and~$\eta$ are conjugate by showing that any $\eta$ of order~3 is conjugate to~$\myj$.
    So in the proof below, for $d=1$, take $\xi=\myj$.

    Applying Lemma~\ref{lem:oneminusomegafactor} to both $\xi$ and~$\eta$, we have integers
  \begin{displaymath}
      \tilde x_3=\frac{2x_3-x_4}{3},\quad \tilde x_4=\frac{x_3+x_4}{3},\quad \tilde y_3=\frac{2y_3-y_4}{3}\quad\text{and}\quad \tilde y_4=\frac{y_3+y_4}{3},
  \end{displaymath}
  for which
  \begin{equation}\label{eq:newtildexyformulas}
d({\tilde x_3}^2-\tilde x_3\tilde x_4+{\tilde x_4}^2)=x_1(x_1+1)\quad\text{and}\quad d({\tilde y_3}^2-\tilde y_3\tilde y_4+{\tilde y_4}^2)=y_1(y_1+1).
  \end{equation}
  Write $x_1=d'u'$ and $x_1+1=d''u''$, and similarly $y_1=d'v'$ and $y_1+1=d''v''$. Then $d'$ divides $d'v'-d'u'=y_1-x_1$,
  and $d''$ divides $d''v''-d''u''=(y_1+1)-(x_1+1)=y_1-x_1$. Since $\gcd(d',d'')=1$, $d=d'd''$ divides $y_1-x_1$. Write
  $y_1=x_1+d\Delta$.

  If $x_1+y_1+1=0$, then $d''$ divides $x_1=-(y_1+1)$, and so $d$ divides~$x_1$. Similarly, $d$ divides $x_1+1$, and therefore
  $d$ must be~1. Recall that in the case $d=1$ we are taking $\xi=\myj$, so that $x_1=0$. Now $y_1+1=0$ and so \eqref{eq:newtildexyformulas}
  shows that $\tilde y_3=\tilde y_4=0$. Thus $y_3=y_4=0$ and $\eta=\myj^2=\phi\myj\phi^{-1}$. As $\phi\in\cO^\times$ when $d=1$,
  $\eta$ is conjugate to~$\myj$.

  So we may assume below that $x_1+y_1+1\ne0$.
  
  If $\alpha=a_1+a_2\myj+a_3\phi+a_4\myj\phi\in\cO^\times$, then the equations~\eqref{eq:a3a4generalformulas}, which
  are necessary and sufficient for $\eta\alpha=\alpha\xi$ to hold, take the form
      \begin{equation}\label{eq:newa3a4formula}
    a_3=\frac{n_{31}a_1+n_{32}a_2}{x_1+y_1+1}\quad\text{and}\quad a_4=\frac{n_{41}a_1+n_{42}a_2}{x_1+y_1+1},
  \end{equation}
where
  \begin{displaymath}
    n_{31}=\tilde x_4-\tilde x_3+\tilde y_3-\tilde y_4,\ 
    n_{32}=\tilde x_3+\tilde y_4,\ n_{41}=\tilde y_3-\tilde x_3\ \text{and}\  n_{42}=\tilde x_4-\tilde y_3+\tilde y_4.
  \end{displaymath}
  Notice that
  \begin{equation}\label{eq:n41n32identity}
    \begin{aligned}
      n_{41}n_{32}-n_{42}n_{31}&=({\tilde y_3}^2-\tilde y_3\tilde y_4+{\tilde y_4}^2)-({\tilde x_3}^2-\tilde x_3\tilde x_4+{\tilde x_4}^2)\\
      &=y_1(y_1+1)/d-x_1(x_1+1)/d\\
      &=(y_1-x_1)(x_1+y_1+1)/d\\
           &=(x_1+y_1+1)\Delta.\\
      \end{aligned}
  \end{equation}
If $n_{31}$, $n_{32}$, $n_{41}$ and $n_{42}$ are all zero, then we find that $\tilde x_3=\tilde x_4=\tilde y_3=\tilde y_4=0$,
and so $x_1(x_1+1)=0=y_1(y_1+1)$ by~\eqref{eq:newtildexyformulas}. As $x_1+y_1+1\ne0$, we have either $\xi=\eta=\myj$ or $\xi=\eta=\myj^2$.
So we can assume that either $(n_{31},n_{32})\ne(0,0)$ or $(n_{41},n_{42})\ne(0,0)$. Both hold if $\Delta\ne0$, as we see from~\eqref{eq:n41n32identity},
but we need to also consider the case $\Delta=0$.

Let's first assume that $(n_{31},n_{32})\ne(0,0)$. Then we can form $g_3=\gcd(n_{31},n_{32})$, and write $n_{31}=g_3n_{31}'$ and $n_{32}=g_3n_{32}'$. As $\gcd(g_3',g_4')=1$,
we can choose $z,w\in\Z$ so that $n_{31}'z+wn_{32}'z=1$. Let $g_3''=\gcd(g_3,x_1+y_1+1)$, and
write $g_3=g_3'g_3''$ and $x_1+y_1+1=g_3''\delta_3$. Then $\gcd(g_3',\delta_3)=1$, and 
the first equation in~\eqref{eq:newa3a4formula} becomes
\begin{displaymath}
    a_3=\frac{g_3(n_{31}'a_1+n_{32}'a_2)}{g_3''\delta_3}=\frac{g_3'(n_{31}'a_1+n_{32}'a_2)}{\delta_3}.
\end{displaymath}
From $\delta_3a_3=g_3'(n_{31}'a_1+n_{32}'a_2)$ and $\gcd(g_3',\delta_3)=1$, we see that
$a_3$ must be a multiple $g_3'm$ of~$g_3'$, and $m\delta_3=n_{31}'a_1+n_{32}'a_2$. So arguing as
in the derivation of~\eqref{eq:solvelineareqtn}, there must be an integer~$n$ so that
\begin{equation}\label{eq:a1a2mnformulas}
  a_1=u_{11}m+u_{12}n\quad \text{and}\quad a_2=u_{21}m+u_{22}n
\end{equation}
for
\begin{displaymath}
  \begin{pmatrix}
    u_{11}&u_{12}\\
    u_{21}&u_{22}\\
  \end{pmatrix}=
    \begin{pmatrix}
      z\delta_3&n_{32}'\\
      w\delta_3&-n_{31}'
  \end{pmatrix}.
\end{displaymath}
Notice that the determinant of this matrix is~$-\delta_3$.
Substituting these into~\eqref{eq:newa3a4formula} we get
$a_3=g_3'm(zn_{31}'+wn_{32}')=g_3'm$ and
\begin{equation}\label{eq:a4deltadashformula}
  a_4=\frac{m\delta_3(n_{41}z+n_{42}w)}{x_1+y_1+1}+\frac{(n_{31}n_{42}'-n_{32}n_{41}')n}{x_1+y_1+1}
    =\frac{m(n_{41}z+n_{42}w)}{g_3''}+\frac{\Delta n}{g_3}.      
\end{equation}
Let us next consider the special case when $(n_{31},n_{32})\ne(0,0)$, but $(n_{41},n_{42})=(0,0)$.
Then $\Delta=0$ by~\eqref{eq:n41n32identity} because $x_1+y_1+1\ne0$, and taking $a_1$ and~$a_2$ as in~\eqref{eq:a1a2mnformulas},
we have $a_3=g_3'm$ and $a_4=0$. So $a_1$, $a_2$, $a_3$ and~$a_4$ are all integer linear combinations
of $m$ and~$n$, and $\myNrd(\alpha)$ equals $A_1m^2+B_1mn+C_1n^2$ for integers $A_1$, $B_1$ and~$C_1$ satisfying
\begin{displaymath}
  B_1^2-4A_1C_1=\frac{-3(u_{11}u_{22}-u_{12}u_{21})^2}{(x_1+y_1+1)^2}=\frac{-3\delta_3^2}{(x_1+y_1+1)^2}=\frac{-3}{{g_3''}^2}.
\end{displaymath}
As $A_1,B_1,C_1\in\Z$, $g_3''$ must be~1, and  $B_1^2-4A_1C_1$ is in fact equal to~$-3$, and so in the usual way
we see that in this case there is an $\alpha\in\cO^\times$ such that $\eta\alpha=\alpha\xi$.

The proof when $(n_{31},n_{32})=(0,0)$ and $(n_{41},n_{42})\ne(0,0)$ is analogous.

So now assume that $(n_{31},n_{32})\ne(0,0)$ and $(n_{41},n_{42})\ne(0,0)$. Let $g_3$, $g_3'$,
$g_3''$ and~$\delta_3$ be as above, and now also
form $g_4=\gcd(n_{41},n_{42})$, write $n_{41}=g_4n_{41}'$, $n_{42}=g_4n_{42}'$, form
$g_4''=\gcd(g_4,x_1+y_1+1)$, and write $g_4=g_4'g_4''$ and $x_1+y_1+1=g_4''\delta_4$, with
$\gcd(n_{41}',n_{42}')=1=\gcd(g_4',\delta_4)$.

Now $\Delta$ is divisible by~$g_3'$. For by~\eqref{eq:n41n32identity}, we can
write $(x_1+y_1+1)\Delta=g_3g_4n$ for some integer~$n$. 
So $g_3'g_3''g_4n=g_3''\delta_3\Delta$, and $g_3'g_4n=\delta_3\Delta$. Since
$\gcd(g_3',\delta_3)=1$, we see that $g_3'$ divides~$\Delta$.

Writing $\Delta=g_3'\Delta'$, \eqref{eq:a4deltadashformula} becomes
  \begin{equation}\label{eq:a4deltadashformula2}
    a_4=\frac{(n_{41}z+n_{42}w)m+\Delta'n}{g_3''}.
  \end{equation}
  At least one of $n_{41}z+n_{42}w$ and $\Delta'$ is nonzero. For if both are zero,
  then $\Delta=0$ and
  \begin{displaymath}
    \begin{aligned}
      0=(n_{41}n_{32}-n_{42}n_{31})z&=(n_{41}z)n_{32}-n_{42}n_{31}z\\
      &=(-n_{42}w)n_{32}-n_{42}n_{31}z=-n_{42}(n_{31}z+n_{32}w)=-n_{42}g_3,
      \end{aligned}
  \end{displaymath}
  so that $n_{42}=0$. Similarly, $n_{41}=0$, and so the assumption $(n_{41},n_{42})\ne(0,0)$ is contradicted.
  
  So we can form $h=\gcd(n_{41}z+n_{42}w,\Delta')$, and write
  $n_{41}z+n_{42}w=c_1h$, $\Delta'=c_2h$, where $\gcd(c_1,c_2)=1$. Let $h''=\gcd(h,g_3'')$ and write
  $g_3''=h''g_3^*$ and $h=h'h''$, where $\gcd(h',g_3^*)=1$.
  Then \eqref{eq:a4deltadashformula2} becomes
  \begin{equation}\label{eq:a4deltadashformula3}
    a_4=\frac{h(c_1m+c_2n)}{g_3''}=\frac{h'(c_1m+c_2n)}{g_3^*}.
  \end{equation}
  Since $\gcd(h',g_3^*)=1$, we see that $a_4$ must be a multiple $h'k$ of~$h'$. Choose integers $u$, $v$ so that
  $c_1u+c_2v=1$. Arguing as in the derivation of~\eqref{eq:solvelineareqtn}, we see that
  the equation $g_3^*k=c_1m+c_2n$ holds if and only if
  \begin{equation}\label{eq:mandnintermsofkandell}
m=v_{11}k+v_{12}\ell\quad\text{and}\quad n=v_{21}k+v_{22}\ell
  \end{equation}
  for some integer~$\ell$, where
  \begin{displaymath}
    \begin{pmatrix}
      v_{11}&v_{12}\\
      v_{21}&v_{22}
    \end{pmatrix}
    =\begin{pmatrix}
    ug_3^*&c_2\\
    vg_3^*&-c_1\\
    \end{pmatrix}.
  \end{displaymath}
  Note that $v_{11}v_{22}-v_{12}v_{21}=-g_3^*$. Conversely, if we start from $k,\ell\in\Z$ and define $m$ and~$n$ by~\eqref{eq:mandnintermsofkandell},
  and $a_1$ and~$a_2$ by~\eqref{eq:a1a2mnformulas},
  then $a_1$, $a_2$, $a_3$ and~$a_4$ are all integer linear combinations of~$k$ and~$\ell$.
 So $\myNrd(\alpha)=Ak^2+Bk\ell+C\ell^2$ for some integers $A$, $B$ and~$C$
 satisfying
 \begin{displaymath}
        B^2-4AC=(B_1^2-4A_1C_1)(v_{11}v_{22}-v_{12}v_{21})^2=\frac{-3{g_3^*}^2}{{g_3''}^2}=\frac{-3}{{h''}^2}.
      \end{displaymath}
    As $A,B,C\in\Z$, the positive integer $h''$ must equal~1. So
    $B^2-4AC=-3$, and in the usual way
we see that in this case there is an $\alpha\in\cO^\times$ such that $\eta\alpha=\alpha\xi$.

\medskip
{\bf 2. The case $\boldsymbol{d'd''=d/3}$.}
      Write $\tilde d=d/3$ and $\xi^*=x_1^*+x_2^*j+x_3^*\phi+x_4^*j\phi$,
      where $x_1^*=x_1$, $x_2^*=x_2$, $x_3^*=x_4-x_3$ and $x_4^*=x_4$. Then
      ${x_3^*}^2-x_3^*x_4^*+{x_4^*}^2=x_3^2-x_3x_4+x_4^2$. By Proposition~\ref{prop:xistarconjugatetoxi},
      neither $x_3^2-x_3x_4+x_4^2$ nor $y_3^2-y_3y_4+y_4^2$ is divisible by~3.

      Suppose that $x_1+y_1+1=0$. Then $d''$ divides $y_1+1=-x_1$, so that $\tilde d$ divides~$x_1$.
      Also, $d'$ divides $-y_1=x_1+1$, so that $\tilde d$ divides~$x_1+1$. Hence $\tilde d=1$, which
      contradicts the last sentence in the statement of Proposition~\ref{prop:xistarconjugatetoxi}. So $x_1+y_1+1\ne0$.

      Write $x_1=d'u'$, $x_1+1=d''u''$, $y_1=d'v'$ and $y_1+1=d''v''$. Then
      $d'$ divides $d'v'-d'u'=y_1-x_1$ and 
      $d''$ divides $d''v''-d''u''=(y_1+1)-(x_1+1)=y_1-x_1$. Hence $\tilde d$ divides
      $y_1-x_1$. Suppose that $3^k\|d$. Then $k\ge1$. If $k=1$, then $\tilde d(x_3^2-x_3x_4+x_4^2)=x_1(x_1+1)$
      shows that 3 does not divide either $x_1$ or~$x_1+1$, and so $x_1\equiv1$ (mod~3) must hold.
      Similarly $y_1\equiv1$ (mod~3) and so 3 divides~$y_1-x_1$. As $\gcd(3,\tilde d)=1$ in this $k=1$ case,
      we see that $d$ divides~$y_1-x_1$. When $k\ge2$, write $d=3^kd^*$. Then $3^{k-1}$ divides $d/3=d'd''$, and
      so $3^{k-1}$ divides $d'$ or~$d''$. Assuming $3^{k-1}$ divides~$d'$, then it divides $x_1$ and~$y_1$. Writing
      $x_1=3^{k-1}x_1'$ and $y_1=3^{k-1}y_1'$, we have $d^*(x_3^2-x_3x_4+x_4^2)=x_1'(3^{k-1}x_1'+1)$, and so
      $x_1'\equiv d^*$ (mod~3). Similarly $y_1'\equiv d^*$. Hence $y_1-x_1=3^{k-1}(y_1'-x_1')$ is divisible by~$3^k$,
      and hence by~$d$. So we can write $y_1=x_1+\Delta d$ for an integer~$\Delta$.

      Let $\alpha=a_1+a_2\myj+a_3j+a_4\myj\phi\in\cO$. Then $\eta\alpha=\alpha\xi$  if and only if the equations~\eqref{eq:a3a4generalformulas}
      hold, and similarly, $\eta\alpha=\alpha\xi^*$  if and only if
      \begin{displaymath}
a_3=\frac{h_{31}a_1+h_{32}a_2}{3(x_1+y_1+1)}\quad\text{and}\quad
a_4=\frac{h_{41}a_1+h_{42}a_2}{3(x_1+y_1+1)},
      \end{displaymath}
      where
      \begin{displaymath}
        h_{31}=x_3+x_4+y_3-2y_4,\quad h_{32}=-2x_3+x_4+y_3+y_4,
      \end{displaymath}
      and
      \begin{displaymath}
h_{41}=2x_3-x_4+2y_3-y_4,\quad h_{42}=-x_3+2x_4-y_3+2y_4.
      \end{displaymath}
      Let us next show that either 3 divides all four $m_{ij}$'s and none of the $h_{ij}$'s, or vice versa.
      Firstly, $m_{31}-m_{32}=3(-x_3+x_4-y_4)$ and $h_{31}-h_{32}=3(x_3-y_4)$, and so
      $m_{31}\equiv m_{32}$ and $h_{31}\equiv h_{32}$ (mod~3). Suppose that $m_{31}=a+3k$, $m_{32}=a+3\ell$
      and $h_{31}=b+3k'$, $h_{32}=b+3\ell'$, where $a,b\in\{0,1,2\}$. Then we find that
      $x_3+x_4=a-b+3(k-k'+x_3)$. But 3 does not divide $x_3+x_4$, and so $a\ne b$. Also,
      $y_3+y_4=2a-b+3(\ell+k-k')$ is not divisible by~3, and so $(a,b)\ne(1,2)$ and $(b,a)\ne(2,1)$.
      So $a=0$ and $b\ne0$ or vice versa. Moreover, we calculate that
      $m_{41}\equiv -a$ and $m_{42}\equiv -a$ (mod~3) and $h_{41}\equiv -b$ and $h_{42}\equiv -b$ (mod~3).
      
        Notice that
  \begin{equation}\label{eq:m41m32identity}
    \begin{aligned}
      m_{41}m_{32}-m_{42}m_{31}&=3\bigl((y_3^2-y_3y_4+y_4^2)-(x_3^2-x_3x_4+x_4^2)\bigr)\\
      &=9y_1(y_1+1)/d-9x_1(x_1+1)/d\\
      &=9(y_1-x_1)(x_1+y_1+1)/d\\
           &=9(x_1+y_1+1)\Delta.\\
      \end{aligned}
  \end{equation}
       If $(m_{31},m_{32})=(0,0)$, then $y_3=-x_3$ and $y_4=-x_3+x_4$, so $y_3^2-y_3y_4+y_4^2=x_3^2-x_3x_4+x_4^2$,
      and therefore $x_1(x_1+1)=y_1(y_1+1)$. Then $(x_1+y_1+1)(y_1-x_1)=0$, so that $y_1=x_1$. So
      $(y_1,y_2,y_3,y_4)=(x_1,x_2,-x_3,-x_3+x_4)$, and $\eta$ is conjugate to
      $\myj\xi\myj^{-1}$ by Lemma~\ref{lem:interchangex3x4}, and so $\xi$ and~$\eta$ are conjugate.
      
      Similarly, if $(m_{41},m_{42})=(0,0)$, then $y_3=x_3-x_4$ and $y_4=-x_4$. Again
      $y_3^2-y_3y_4+y_4^2=x_3^2-x_3x_4+x_4^2$ and $x_1(x_1+1)=y_1(y_1+1)$.
      So $(y_1,y_2,y_3,y_4)=(x_1,x_2,x_3-x_4,-x_4)$, and $\eta$ is conjugate to
      $\myj^{-1}\xi\myj$ by Lemma~\ref{lem:interchangex3x4}, and again $\xi$ and~$\eta$ are conjugate.

      So we may assume below that $(m_{31},m_{32})\ne(0,0)$ and $(m_{41},m_{42})\ne(0,0)$.
      Form $g_3=\gcd(m_{31},m_{32})$ and $g_4=\gcd(m_{41},m_{42})$. 
     
      Interchanging the roles of $\xi$ and~$\xi^*$ if necessary, we may assume that 3 divides all four $m_{ij}$'s
      and none of the $h_{ij}$'s. We claim that $\eta$ is conjugate to~$\xi$.

      Let $g_3''=\gcd(g_3,3(x_1+y_1+1))$, and write $g_3=g_3'g_3''$ and $3(x_1+y_1+1)=g_3''\delta_3$.
      Then $\gcd(g_3',\delta_3)=1$. The first formula in~\eqref{eq:a3a4generalformulas} becomes
      \begin{equation}\label{eq:a3delta3eq}
a_3=\frac{g_3'(m_{31}'a_1+m_{32}'a_2)}{\delta_3}.
      \end{equation}
      Since $\gcd(g_3',\delta_3)=1$, any integer $a_3$ for which this holds must be a multiple $g_3'm$
      of~$g_3'$. Choose integers $z,w$ so that $m_{31}z+m_{32}w=g_3$.  Arguing as in the derivation of~\eqref{eq:solvelineareqtn},
        we see that the $(a_1,a_2)$ for which
      \eqref{eq:a3delta3eq} holds for $a_3=g_3'm$ are
      \begin{equation}\label{eq:a1a2mnformulascase2}
  a_1=u_{11}m+u_{12}n\quad \text{and}\quad a_2=u_{21}m+u_{22}n
\end{equation}
for
\begin{displaymath}
  \begin{pmatrix}
    u_{11}&u_{12}\\
    u_{21}&u_{22}\\
  \end{pmatrix}=
    \begin{pmatrix}
      z\delta_3&m_{32}'\\
      w\delta_3&-m_{31}'
  \end{pmatrix}.
\end{displaymath}
Notice that the determinant of this matrix is~$-\delta_3$.
 Substituting these values of~$a_1$ and~$a_2$ into~\eqref{eq:a3a4generalformulas}, we get
      $a_3=g_3'm$, and
      \begin{displaymath}
        \begin{aligned}
          a_4&=\frac{\delta_3(m_{41}z+m_{42}w)m}{3(x_1+y_1+1)}+\frac{(m_{41}m_{32}-m_{42}m_{31})n}{3g_3(x_1+y_1+1)}\\
          &=\frac{(m_{41}z+m_{42}w)m}{g_3''}+\frac{3\Delta n}{g_3}.\\
          \end{aligned}
        \end{displaymath}
      using~\eqref{eq:m41m32identity}. From~\eqref{eq:m41m32identity}, we also see that
      $g_3'g_3''g_4=g_3g_4$ divides $9(x_1+y_1+1)\Delta=3g_3''\delta_3\Delta$, and so
      $g_3'$ divides~$3\Delta$, as $\gcd(g_3',\delta_3)=1$. Let $\Delta'=3\Delta/g_3'$. So our equation for~$a_4$ becomes
           \begin{equation}\label{eq:a4formulawithDeltadash}
          a_4=\frac{(m_{41}z+m_{42}w)m+\Delta' n}{g_3''}.
           \end{equation}
           At least one of the numbers $\Delta'$ and $m_{41}z+m_{42}w$ is non zero. For otherwise,
           $\Delta=0$, and \eqref{eq:m41m32identity} shows that
  \begin{displaymath}
    \begin{aligned}
      0=(m_{41}m_{32}-m_{42}m_{31})z&=(m_{41}z)m_{32}-m_{42}m_{31}z\\
      &=(-m_{42}w)m_{32}-m_{42}m_{31}z\\
      &=-m_{42}(m_{31}z+m_{32}w)=-m_{42}g_3,
      \end{aligned}
  \end{displaymath}
  so that $m_{42}=0$. Similarly, $m_{41}=0$, which contradicts the assumption  we are making
  that $(m_{41},m_{42})\ne(0,0)$.

  So we can form $h=\gcd(m_{41}z+m_{42}w,\Delta')$, and write
  $m_{41}z+m_{42}w=c_1h$, $\Delta'=c_2h$, where $\gcd(c_1,c_2)=1$. Let $h''=\gcd(h,g_3'')$ and write
  $g_3''=h''g_3^*$ and $h=h'h''$, where $\gcd(h',g_3^*)=1$.
  Then \eqref{eq:a4formulawithDeltadash} becomes
  \begin{displaymath}
    a_4=\frac{h(c_1m+c_2n)}{g_3''}=\frac{h'(c_1m+c_2n)}{g_3^*}.
  \end{displaymath}
  Since $\gcd(h',g_3^*)=1$, we see that $a_4$ must be a multiple $h'k$ of~$h'$. Choose integers $u$, $v$ so that
  $c_1u+c_2v=1$. Arguing as in the derivation of~\eqref{eq:solvelineareqtn}, we see that
  the equation $g_3^*k=c_1m+c_2n$ holds if and only if
  \begin{equation}\label{eq:mandnintermsofkandellcase2}
m=v_{11}k+v_{12}\ell\quad\text{and}\quad n=v_{21}k+v_{22}\ell
  \end{equation}
  for some integer~$\ell$, where
  \begin{displaymath}
    \begin{pmatrix}
      v_{11}&v_{12}\\
      v_{21}&v_{22}
    \end{pmatrix}
    =\begin{pmatrix}
    ug_3^*&c_2\\
    vg_3^*&-c_1\\
    \end{pmatrix}.
  \end{displaymath}
  Note that $v_{11}v_{22}-v_{12}v_{21}=-g_3^*$. Conversely, if we start from $k,\ell\in\Z$ and define $m$ and~$n$ by~\eqref{eq:mandnintermsofkandellcase2},
  and $a_1$ and~$a_2$ by~\eqref{eq:a1a2mnformulascase2}
  then $a_1$, $a_2$, $a_3$ and~$a_4$ are all integer linear combinations of~$k$ and~$\ell$, so
  $\myNrd(\alpha)=Ak^2+Bk\ell+C\ell^2$ for some
  integers $A$, $B$ and~$C$ for which
  \begin{displaymath}
    \begin{aligned}
      B^2-4AC&=(B_1^2-4A_1C_1)(v_{11}v_{22}-v_{12}v_{21})^2\\
      &=(B_0^2-4A_0C_0)(u_{11}u_{22}-u_{12}u_{21})^2(v_{11}v_{22}-v_{12}v_{21})^2\\
      &=\frac{-3{g_3^*}^2\delta_3^2}{(x_1+y_1+1)^2}\\
      &=\frac{-27}{{h''}^2},
      \end{aligned}
  \end{displaymath}
  by~\eqref{eq:discr0}. Since the $B^2-4AC$ is an integer, $h''$ must be~1 or~3. To see that $h''=3$, suppose
  that $3^{\gamma_3}\| g_3$ and $3^{\gamma_4}\|g_4$. Then $\gamma_3,\gamma_4\ge1$ because we are assuming that
  3 divides all the $m_{ij}$'s.  If $\Delta=0$, then $\Delta'=0$ and $h=m_{41}z+m_{42}w$, which is divisible by~3.
  As 3 also divides $g_3''=\gcd(g_3,3(x_1+y_1+1))$, it divides $h''=\gcd(h,g_3'')$.
  So assume $\Delta\ne0$, and that $3^\beta\|3(x_1+y_1+1)$ and $3^\gamma\|\Delta$.
  If $\gamma_3\le\beta$, then $3^{\gamma_3}\|\gcd(g_3,3(x_1+y_1+1))=g_3''$, and so 3 does not divide $g_3'=g_3/g_3''$.
  So 3 divides $\Delta'=3\Delta/g_3'$, and therefore 3 divides $h=\gcd(g_{41}z+g_{42}w,\Delta')$, and
  $h''=\gcd(h,g_3'')$. If $\gamma_3>\beta$,
  then $3^\beta\|g_3''$ and so $3^{\gamma_3-\beta}\| g_3'$. From~\eqref{eq:m41m32identity} we see that $\beta+\gamma+1\ge\gamma_3+\gamma_4$,
  and so $\gamma\ge\gamma_3-\beta+\gamma_4-1\ge\gamma_3-\beta$. But $3^\gamma\|\Delta$ and $3^{\gamma_3-\beta}\|g_3'$, so
  $g_3'$ divides $\Delta$ because it divides~$3\Delta$. Hence 3 again divides $\Delta'$, and therefore 3 divides~$h''$, as before.  

  Hence $B^2-4AC=-3$, and so in the usual way we see that there is an $\alpha\in\cO^\times$ such that $\eta\alpha=\alpha\xi$.
  \end{proof}
  \begin{subsection}{Action of Atkin-Lehner elements on~$\cO$}
     For any $\xi=x_1+x_2\myj+x_3\phi+x_4\myj\phi\in\cO$, let $d_\xi'=\gcd(d,x_1)$. 
      \begin{lemma}\label{lem:atkinlehneraction}Let $e'$ be a Hall divisor of~$d$,
        and write $e''=d/e'$. Let $z,t\in\mathfrak o$ satisfy $e'z\bar z-e''t\bar t=\epsilon\in\{-1,1\}$, and let $w_{e'}=e'z+t\phi$
        be the corresponding Atkin-Lehner element of~$\cO$. Let $\xi=x_1+x_2\myj+x_3\phi+x_4\myj\phi\in\cO$, and let $\xi'=w_{e'}\xi w_{e'}^{-1}$. Then
        \begin{equation}\label{eq:dashxidash}
          d_{\xi'}'=\gcd(e',x_1-x_2)\gcd(e'',x_1).
        \end{equation}
        In particular, if $\xi$ has order~3, then $d_{\xi'}'=\gcd(e',x_1+1)\gcd(e'',x_1)$.
      \end{lemma}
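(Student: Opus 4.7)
The plan is to compute the $\Q(j)$-part $x'$ of $\xi'=w_{e'}\xi w_{e'}^{-1}$ modulo $e'$ and modulo $e''$ separately, and then combine the two via the Chinese Remainder Theorem using $\gcd(e',e'')=1$ and $d=e'e''$.

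First I would apply Lemma~\ref{lem:xdashformula} with $\alpha=w_{e'}=e'z+t\phi$ (so $a=e'z$, $b=t$, $C=\myNrd(w_{e'})=e'\epsilon$) to $\xi=x+y\phi$ with $x=x_1+x_2j$, $y=x_3+x_4j$. The formula in that lemma (which holds whenever $C\ne 0$, independently of whether $\alpha\in\cO^\times$) gives
\[
x'=x+\epsilon e''\,t\bar t\,(x-\bar x)+\epsilon d\,(\bar z t\bar y - z\bar t y).
\]
Using the Atkin-Lehner identity $e'z\bar z-e''t\bar t=\epsilon$, I can also substitute $e''t\bar t=e'z\bar z-\epsilon$ into the middle term to obtain the equivalent expression
\[
x'=\bar x+\epsilon e'z\bar z\,(x-\bar x)+\epsilon d\,(\bar z t\bar y - z\bar t y).
\]

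Next I read off two congruences in $\mathfrak o$. In the first expression every term past $x$ lies in $e''\mathfrak o$ (since $e''\mid e''t\bar t$ and $e''\mid d$), so $x'\equiv x\pmod{e''\mathfrak o}$. In the second expression every term past $\bar x$ lies in $e'\mathfrak o$ (since $e'\mid e'z\bar z$ and $e'\mid d$), so $x'\equiv \bar x\pmod{e'\mathfrak o}$. Writing $x=x_1+x_2j$ and $\bar x=(x_1-x_2)-x_2j$, and comparing the $1$-components with $x'=x_1'+x_2'j$, these become $x_1'\equiv x_1\pmod{e''}$ and $x_1'\equiv x_1-x_2\pmod{e'}$. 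Hence $\gcd(e'',x_1')=\gcd(e'',x_1)$ and $\gcd(e',x_1')=\gcd(e',x_1-x_2)$.

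Finally, because $\gcd(e',e'')=1$ and $d=e'e''$, the Chinese Remainder Theorem gives
\[
d_{\xi'}'=\gcd(d,x_1')=\gcd(e',x_1')\,\gcd(e'',x_1')=\gcd(e',x_1-x_2)\gcd(e'',x_1),
\]
which is~\eqref{eq:dashxidash}. For the order~$3$ case, Lemma~\ref{lem:xiorder3conds} gives $x_2=2x_1+1$, so $x_1-x_2=-(x_1+1)$ and $\gcd(e',x_1-x_2)=\gcd(e',x_1+1)$, yielding the second formula. The only real work is the initial manipulation from Lemma~\ref{lem:xdashformula}; the rest is just extracting the right reduction modulo $e'$ versus $e''$ and recombining by CRT, so I expect no serious obstacle.
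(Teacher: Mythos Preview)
Your proof is correct and follows essentially the same route as the paper: both apply Lemma~\ref{lem:xdashformula} with $\alpha=w_{e'}$ and then extract the $1$-component of $x'$ modulo $e'$ and modulo $e''$ separately, recombining via $\gcd(e',e'')=1$. The only cosmetic difference is that the paper writes $x'=\epsilon(e'xz\bar z-e''\bar xt\bar t)+ud$ in a single line and then uses $\gcd(e',t\bar t)=1=\gcd(e'',z\bar z)$ (immediate from $e'z\bar z-e''t\bar t=\epsilon$) to strip the extra factors, whereas you arrive at the clean congruences $x'\equiv x\pmod{e''\mathfrak o}$ and $x'\equiv\bar x\pmod{e'\mathfrak o}$ directly by keeping two equivalent expressions for $x'$.
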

      \begin{proof}Write $x=x_1+x_2\myj$, $y=x_3+x_4\myj$ and $\xi'=x'+y'\phi=x_1'+x_2'\myj+x_3'\phi+x_4'\myj\phi$.
        From $w_{e'}^{-1}=(\epsilon/e')(e'\bar z-t\phi)$, we see from Lemma~\ref{lem:xdashformula} that 
        \begin{displaymath}
          x'=\epsilon(e'xz\bar z-e''\bar xt\bar t)+ud=\epsilon(e'x_1z\bar z-e''(x_1-x_2)t\bar t)+m\myj+ud,
        \end{displaymath}
        for some $u\in\mathfrak o$ and $m\in\Z$, so that $x_1'=\epsilon(e'x_1z\bar z-e''(x_1-x_2)t\bar t)$ (mod~$d$). From $e'z\bar z-e''t\bar t=\epsilon$,
        we see that $\gcd(e',t\bar t)=1=\gcd(e'',z\bar z)$. Hence
        \begin{displaymath}
          \gcd(e',x_1')=\gcd(e',x_1-x_2)\quad\text{and}\quad\gcd(e'',x_1')=\gcd(e'',x_1),
        \end{displaymath}
        hence~\eqref{eq:dashxidash}. When $\xi$ has order~3, then $x_2=2x_1+1$ by Lemma~\ref{lem:xiorder3conds}.
      \end{proof}
      \begin{proposition}Let $\xi=x_1+x_2\myj+x_3\phi+x_4\myj\phi\in\cO$ have order~3. Assume that
        $x_3^2-x_3x_4+x_4^2$ is divisible by~3. Then there is an Atkin-Lehner element $w_{e'}$ such that
        $w_{e'}\xi w_{e'}^{-1}=\myj$.
      \end{proposition}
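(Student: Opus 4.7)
The plan is to use the Atkin-Lehner machinery of Section~\ref{sec:ALelts} to conjugate $\xi$ into an element sharing the invariants $(d',d'')$ of~$\myj$, and then invoke Proposition~\ref{prop:mainconjugacyresult} and absorb the conjugating unit into the Atkin-Lehner element.

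First I would observe that the hypothesis $3\mid x_3^2-x_3x_4+x_4^2$ is equivalent, by Proposition~\ref{prop:xistarconjugatetoxi}, to $d_\xi'd_\xi''=d$. Writing $\myj=0+1\cdot\myj+0\cdot\phi+0\cdot\myj\phi$, I compute $d_\myj'=\gcd(0,d)=d$, $d_\myj''=\gcd(1,d)=1$, and $\myj^*=\myj$ directly from~\eqref{eq:xistardefn} since $x_3=x_4=0$. So the target of the conjugation has $(d',d'')=(d,1)$.

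Next, I would set $e'=d_\xi''$ and $e''=d_\xi'$. Then $e'e''=d_\xi''d_\xi'=d$ and $\gcd(e',e'')=1$ (Corollary~\ref{cor:ddashddoubledashinvariant}), so $e'$ is a Hall divisor of~$d$. By the Corollary at the end of Section~\ref{sec:ALelts}, there is an Atkin-Lehner element $w_{e'}=e'z+t\phi$ with $z,t\in\mathfrak o$ and $e'z\bar z-e''t\bar t\in\{-1,1\}$. Setting $\xi_1=w_{e'}\xi w_{e'}^{-1}\in\cO$, Lemma~\ref{lem:atkinlehneraction} gives
$$d_{\xi_1}'=\gcd(e',x_1+1)\gcd(e'',x_1)=d_\xi''\cdot d_\xi'=d,$$
since $d_\xi''\mid x_1+1$ and $d_\xi'\mid x_1$; hence also $d_{\xi_1}''=1$.

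Now $\xi_1$ and $\myj$ share the same $(d',d'')$-invariants, so Proposition~\ref{prop:mainconjugacyresult} furnishes an $\alpha\in\cO^\times$ with $\alpha\xi_1\alpha^{-1}$ equal to either $\myj$ or $\myj^*$, and since $\myj^*=\myj$ we get $\alpha\xi_1\alpha^{-1}=\myj$. Setting $w=\alpha w_{e'}$ yields $w\xi w^{-1}=\alpha(w_{e'}\xi w_{e'}^{-1})\alpha^{-1}=\myj$. By the last statement of Lemma~\ref{lem:AtkinLehnerconstruction}, $w=\alpha w_{e'}$ can itself be written as $e'x'+y'\phi$ with $x',y'\in\mathfrak o$ satisfying $e'x'\bar x'-e''y'\bar y'\in\{-1,1\}$, so $w$ is an Atkin-Lehner element of the form~\eqref{eq:atkinlehner}.

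The only genuinely delicate point is the choice $e'=d_\xi''$ rather than $e'=d_\xi'$: only this choice makes Lemma~\ref{lem:atkinlehneraction} produce $d_{\xi_1}'=d$, matching $d_\myj'$. The symmetric choice would instead give $d_{\xi_1}'=1$ and place $\xi_1$ in the wrong conjugacy class to be compared with~$\myj$. Every other ingredient is available from the text above.
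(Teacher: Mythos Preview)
Your proof is correct and follows essentially the same route as the paper's: set $e'=d_\xi''$, use the existence of an Atkin-Lehner element $w_{e'}$ to conjugate $\xi$ to an element with invariants $(d,1)$ via Lemma~\ref{lem:atkinlehneraction}, then apply Proposition~\ref{prop:mainconjugacyresult} and absorb the resulting unit into $w_{e'}$ using the last sentence of Lemma~\ref{lem:AtkinLehnerconstruction}. The only cosmetic differences are that the paper cites Theorem~\ref{thm:bezoutresult} rather than its corollary for the existence of $w_{e'}$, and writes the final element as $\alpha^{-1}w_{e'}$ (from $\xi'=\alpha\myj\alpha^{-1}$) rather than your $\alpha w_{e'}$ (from $\alpha\xi_1\alpha^{-1}=\myj$); your explicit remark that $\myj^*=\myj$ is a helpful clarification the paper leaves implicit.
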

      \begin{proof}Let $d_\xi'=\gcd(d,x_1)$ and $d_\xi''=\gcd(d,x_1+1)$. Then $d_\xi'd_\xi''=d$ by Proposition~\ref{prop:xistarconjugatetoxi}.
        Let $e'=d_\xi''$ and $e''=d_\xi'$. Then $e'$ is a Hall divisor of~$d$. By Theorem~\ref{thm:bezoutresult}
        there exist $z,t\in\cO$ such that $e'z\bar z-e''t\bar t\in\{-1,1\}$. By Lemma~\ref{lem:AtkinLehnerconstruction},
        $w_{e'}=e'z+t\phi$ is an Atkin-Lehner element. Let $\xi'=w_{e'}\xi w_{e'}^{-1}$. By \eqref{eq:dashxidash},
        $d_{\xi'}'=\gcd(d,x_1+1)\gcd(d,x_1)=d_\xi''d_\xi'=d$. So $(d_{\xi'}',d_{\xi'}'')=(d,1)=(d_\myj',d_\myj'')$, and
        so by Proposition~\ref{prop:mainconjugacyresult}, there is an~$\alpha\in\cO^\times$ such that $\xi'=\alpha j\alpha^{-1}$.
        So $w\xi w^{-1}=j$ for $w=\alpha^{-1}w_{e'}$, which is an Atkin-Lehner element by the last sentence in the
        statement of Lemma~\ref{lem:AtkinLehnerconstruction}.
        \end{proof}
      \end{subsection}
\end{section}
\begin{section}{Proofs of Theorems~\ref{thm:bezoutresult} and~\ref{thm:Cdcount}.}
  
  Suppose that $d\ge1$ is an integer which has
  a factorization $d=p_1^{m_1}\cdots p_r^{m_r}$, where $p_1,\ldots,p_r$ are distinct primes and $m_1,\ldots,m_r\ge1$.
  We proved Theorem~\ref{thm:bezoutresult}(a) in Section~\ref{sec:ALelts}. Let $d'$ be a Hall divisor of $d$, and let $d''=d/d'$.
  \begin{proof}[Proof of Theorem~\ref{thm:bezoutresult}(b)]Suppose that $d'd''\equiv 2$ (mod~3). Let us prove that one can choose $u,v\in\cL$ such that $d'u-d''v\in \{-1,1\}$ and $3\nmid uv$. Interchanging $d'$ and~$d''$ if necessary, we may suppose that $d'\equiv1$ and $d''\equiv2$ (mod~3).
    Let $d=d'd''$, and form the quaternion algebra~$\cH$ as above.
    By Theorem~\ref{thm:bezoutresult}(a), there exist~$u,v\in\cL$ so that $d'u-d''v=\epsilon\in\{-1,1\}$.
    Modulo~3, $(u,v)\equiv(0,0)$, $(0,1)$, $(1,0)$ or~$(1,1)$. The case $(u,v)\equiv(0,0)$ is clearly impossible,
    and if we can find $u,v\in\cL$ such that $(u,v)\equiv(1,1)$, then Theorem~\ref{thm:bezoutresult}(b) holds
    for~$(d',d'')$. Let us therefore suppose that $(u,v)\equiv(1,0)$ or $(u,v)\equiv(0,1)$ for all such $(u,v)$. If $(u,v)\equiv(1,0)$, then $1\cdot1-2\cdot0\equiv\epsilon$ and so $\epsilon=1$ must
    hold. If $(u,v)\equiv(0,1)$, then $1\cdot0-2\cdot1\equiv\epsilon$, and so $\epsilon=1$ must again hold.

    Pick some $u=x\bar x$ and $v=y\bar y$ such that $d'u-d''v=1$. If $\alpha=a+b\phi\in\cO^\times$, then
    $\alpha(d'x+y\phi)=d'x'+y'\phi$ for $x'=ax+d''b\bar y$ and $y'=ay+d'b\bar x$. One calculates that
    $d'x'\bar x'-d''y'\bar y'=\myNrd(\alpha)$. But by the previous paragraph, $d'x'\bar x'-d''y'\bar y'=1$
    must hold. So $\myNrd(\alpha)=1$ must hold for all $\alpha\in\cO^\times$. 
    But by  \cite[Lemma~39]{Roulleau}, since $d$ is coprime to $3$, the order $\cO$  has elements of norm $-1$, a contradiction.
\end{proof}
  \begin{proof}[Proof of Theorem~\ref{thm:bezoutresult}(c)]Suppose that $d',d''\ge1$ are coprime, and that $3\nmid d'd''$.
    Let us prove that one can choose $u,v\in\cL$ such that $d'u-d''v=1$.

    When $d'\equiv1$ (mod~3), then since $\gcd(d',3d'')=1$,
    Theorem~\ref{thm:bezoutresult}(a) shows that there exist $u,v\in\cL$ so that $d'u-(3d'')v=\epsilon$
    for some $\epsilon\in\{-1,1\}$. Reducing modulo~3, we have $u\equiv\epsilon$ (mod~3), and
    so $\epsilon=1$ must hold, and $d'u-d''(3v)=1$.

    When $d'\equiv2$  and $d''\equiv2$ (mod~3), then again Theorem~\ref{thm:bezoutresult}(a) shows that
    there exist $u,v\in\cL$ so that $(3d')u-d''v=\epsilon$
    for some $\epsilon\in\{-1,1\}$, and reducing modulo~3, we have $v\equiv\epsilon$ (mod~3), and
    so $\epsilon=1$ must hold, and $d'(3u)-d''v=1$.

    When $d'\equiv2$  and $d''\equiv1$ (mod~3), then 
    Theorem~\ref{thm:bezoutresult}(b) shows there exist $u,v\in\cL$ so that $d'u-d''v=\epsilon$
    and so that $3\nmid uv$. Reducing modulo~3, we have $1\equiv d'-d''\equiv d'u-d''v\equiv\epsilon$, and
    so $\epsilon=1$ must hold, and $d'u-d''v=1$.
  \end{proof}

    In view of Proposition~\ref{prop:xistarconjugatetoxi}, parts (a) and~(c)(i) of Theorem~\ref{thm:Cdcount}
  follow from the next result.
    \begin{proposition}\label{prop:ddashddoubledasheqd}There are exactly $2^r$ $\cO^\times$-conjugacy classes of elements
      $\xi\in\cO$ of order~3 for which $d_\xi'd_\xi''=d$.
    \end{proposition}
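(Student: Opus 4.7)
My plan is to establish the count by matching bounds: at most $2^r$ classes via the invariant $(d_\xi',d_\xi'')$, and at least $2^r$ classes by exhibiting an order-3 element realising each Hall divisor pair.

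For the upper bound, Corollary~\ref{cor:ddashddoubledashinvariant} makes $(d_\xi',d_\xi'')$ a conjugation invariant with coprime entries, so under the assumption $d_\xi'd_\xi''=d$ the integer $d_\xi'$ is forced to be a Hall divisor of~$d$, and there are exactly $2^r$ of these. That this invariant is in fact \emph{complete} in the present range is where the heavy lifting of Section~\ref{sec:order3elts} pays off: given two order-3 elements $\xi,\eta$ sharing the same pair with $d_\xi'd_\xi''=d$, Proposition~\ref{prop:mainconjugacyresult} gives $\eta\sim\xi$ or $\eta\sim\xi^*$, and Proposition~\ref{prop:xistarconjugatetoxi} (equivalence of (a) and~(c)) gives $\xi^*\sim\xi$, so $\eta\sim\xi$ in either case.

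For the lower bound, I realise each Hall divisor $d'\|d$ (with $d''=d/d'$) by an explicit construction. Apply Theorem~\ref{thm:bezoutresult}(a) to obtain $u,v\in\cL$ with $d'u-d''v=\epsilon\in\{-1,1\}$, and set
\begin{displaymath}
  x_1=-\epsilon\, d'u,\qquad x_2=2x_1+1.
\end{displaymath}
A direct check gives $d'\mid x_1$, $d''\mid x_1+1$, and $x_1(x_1+1)=d\cdot uv$. Since $3=2^2-2\cdot1+1^2\in\cL$ and $\cL$ is closed under multiplication, $3uv\in\cL$, so I choose integers $x_3,x_4$ with $x_3^2-x_3x_4+x_4^2=3uv$. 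Lemma~\ref{lem:xiorder3conds} then certifies that $\xi=x_1+x_2\myj+x_3\phi+x_4\myj\phi\in\cO$ has order~3. A short coprimality check shows the invariants are exactly as prescribed: $d'\mid x_1$ gives $d'\mid d_\xi'$, while any prime $p\mid d_\xi'$ dividing $d''$ would divide both $x_1$ and $x_1+1$, contradicting $p\mid 1$; hence $d_\xi'=d'$ and symmetrically $d_\xi''=d''$.

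The argument is essentially assembly: the refinement of B\'ezout (Theorem~\ref{thm:bezoutresult}(a)) supplies the Loeschian witnesses, Propositions~\ref{prop:xistarconjugatetoxi} and~\ref{prop:mainconjugacyresult} close up the conjugacy classes, and the only step requiring any vigilance is the final verification that the constructed $\xi$ has $d_\xi'$ equal to $d'$ rather than a proper multiple --- which falls out immediately from the coprimality of $d'$ and~$d''$.
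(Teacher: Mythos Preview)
Your proof is correct and follows essentially the same approach as the paper: the upper bound via the conjugation invariant $(d_\xi',d_\xi'')$ together with Propositions~\ref{prop:xistarconjugatetoxi} and~\ref{prop:mainconjugacyresult}, and the lower bound by constructing an order-3 element from the Loeschian witnesses of Theorem~\ref{thm:bezoutresult}(a). Your choice $x_1=-\epsilon\,d'u$ handles both signs of~$\epsilon$ uniformly, whereas the paper splits into cases ($x_1=d'u$ when $\epsilon=-1$, and $x_1=d''v$ followed by squaring when $\epsilon=+1$); this is a minor streamlining, not a different route.
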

    \begin{proof}Suppose that $d=d'd''$, where $\gcd(d',d'')=1$. There are exactly $2^r$ such factorizations of~$d$.
      By Theorem~\ref{thm:bezoutresult}(a) there exist $u,v\in\cL$ such that $d'u-d''v=\epsilon\in\{-1,1\}$.

      Suppose that $\epsilon=-1$. Let $x_1=d'u$. Then $x_1+1=d'u+1=d''v$. Then
      $\gcd(d,x_1)=\gcd(d'd'',d'u)=d'\gcd(d'',u)=d'$. Similarly, $\gcd(d,x_1+1)=d''$. Also, 
      $3x_1(x_1+1)=3(d'u)(d''v)=d(3uv)$, and $3uv\in\cL$ because $u$, $v$ and 3 are in~$\cL$. So we can write
      $3uv=x_3^2-x_3x_4+x_4^2$ for some integers~$x_3$ and~$x_4$. Then $\xi=x_1+(2x_1+1)j+x_3\phi+x_4\myj\phi\in\cO$
      has order~3, by Lemma~\ref{lem:xiorder3conds}, and $(d_\xi',d_\xi'')=(d',d'')$.

      Suppose instead that $\epsilon=+1$. Let $y_1=d''v$. Then $y_1+1=d'u$. As in the previous paragraph,
      we get an element $\eta$ of~$\cO$ so that $(d_\eta',d_\eta'')=(d'',d')$. Now let $\xi=\eta^2$. Then $(d_\xi',d_\xi'')=(d',d'')$.

      So for each of the $2^r$ pairs $(d',d'')$ for which $d=d'd''$ and $\gcd(d',d'')=1$,
      there is at least one $\xi\in\cO$ of order~3 with $(d_\xi',d_\xi'')=(d',d'')$. By Propositions~\ref{prop:xistarconjugatetoxi}
      and~\ref{prop:mainconjugacyresult}, any two such $\xi$'s are $\cO^\times$-conjugate.
    \end{proof}

    We now consider the $\xi\in\cO$ of order~3 for which $d_\xi'd_\xi''=d/3$.

    \begin{lemma}\label{lem:ddashddoubledasheqtilded}Suppose that $d$ is divisible by~3, and write $\tilde d=d/3$. Then
      the following are equivalent:
    \begin{itemize}
    \item[(a)]There is a $\xi\in\cO$ of order~3 so that $d_\xi'd_\xi''=\tilde d$,
    \item[(b)]There is a $\xi\in\cO$ of order~3 so that $(d_\xi',d_\xi'')=(\tilde d,1)$,
    \item[(c)]For any coprime $d',d''\ge1$ such that $d'd''=\tilde d$, there is a $\xi\in\cO$
      of order~3 so that $(d_\xi',d_\xi'')=(d',d'')$.
      \end{itemize}
  \end{lemma}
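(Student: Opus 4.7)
The implications (c)$\Rightarrow$(b)$\Rightarrow$(a) are immediate: (b) is the special case $(d',d'')=(\tilde d,1)$ of~(c), and (b)$\Rightarrow$(a) is tautological. The substantive content is (a)$\Rightarrow$(c), and the plan is to use Atkin-Lehner conjugation to move between the coprime factorizations of~$\tilde d$. Given $\xi\in\cO$ of order~3 with $(d_\xi',d_\xi'')=(a,b)$ and $ab=\tilde d$, and a prescribed coprime target $d'd''=\tilde d$, I define
\begin{displaymath}
  e' = \prod_{\substack{p\mid\tilde d\\ v_p(d')=v_p(b)}} p^{v_p(d)}.
\end{displaymath}
Since $e'$ is a product of full prime-power divisors of~$d$, it is a Hall divisor of~$d$. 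Theorem~\ref{thm:bezoutresult}(a) and Lemma~\ref{lem:AtkinLehnerconstruction} then provide an Atkin-Lehner element $w_{e'}\in\cO$, and I set $\eta=w_{e'}\xi w_{e'}^{-1}$; this lies in~$\cO$ and has order~3 since $w_{e'}$ normalizes~$\cO$.

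I then invoke Lemma~\ref{lem:atkinlehneraction}. Using $\gcd(d,x_1)=a$, $\gcd(d,x_1+1)=b$ and the Hall-divisor property of~$e'$, one checks that $\gcd(e',x_1+1)=\gcd(e',b)$ and $\gcd(e'',x_1)=\gcd(e'',a)$, so $d_\eta'=\gcd(e',b)\gcd(e'',a)$. Applying the same analysis to $\eta^2=w_{e'}\xi^2 w_{e'}^{-1}$, whose pre-conjugation invariants are $(b,a)$ by Remark~\ref{rem:effectofsquaring}(a), yields $d_\eta''=\gcd(e',a)\gcd(e'',b)$. A prime-by-prime comparison then shows $v_p(d_\eta')=v_p(d')$ and $v_p(d_\eta'')=v_p(d'')$ for every $p\mid\tilde d$, since including $p^{v_p(d)}$ in~$e'$ toggles $v_p(d_\eta')$ between $v_p(a)$ and $v_p(b)$, and the definition of~$e'$ is engineered to pick the desired value. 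For primes $p\mid d$ with $p\nmid\tilde d$ (i.e., $p=3$ when $3\|d$), one has $v_p(a)=v_p(b)=0$, so the minima force $v_p(d_\eta')=v_p(d_\eta'')=0$ and no spurious $3$-factor appears. Hence $d_\eta'=d'$ and $d_\eta''=d''$, establishing~(c); this also confirms that the product $d_\eta'd_\eta''$ stays equal to~$\tilde d$ rather than jumping to~$d$, which Corollary~\ref{cor:ddashddoubledashvalues} otherwise allows.

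The step I expect to require the most care is the case $9\mid d$ (equivalently $3\mid\tilde d$). There the only Hall divisor of~$d$ containing a nontrivial power of~$3$ is $3^{v_3(d)}$, which overshoots the factor $3^{v_3(\tilde d)}=3^{v_3(d)-1}$ that one actually wants to transfer between slots. Fortunately the $\min$-computations inside $\gcd(e',b)$ and $\gcd(e'',a)$ clip the overshoot back to the correct power because $v_3(a),v_3(b)\in\{0,v_3(\tilde d)\}$; making this precise requires a short case analysis on whether $v_3(a)=v_3(\tilde d)$ or $v_3(b)=v_3(\tilde d)$, and this is the only subtle point in an otherwise clean Atkin-Lehner manipulation.
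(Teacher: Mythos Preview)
Your proof is correct and rests on the same key tool as the paper, namely Atkin--Lehner conjugation together with Lemma~\ref{lem:atkinlehneraction}. The only difference is organizational: the paper proves (a)$\Rightarrow$(b)$\Rightarrow$(c), first conjugating by $w_{e'}$ with $e'=d_\xi''$ to normalize to $(\tilde d,1)$, then conjugating again by $w_{e'}$ with $e'=3d''$ to reach the arbitrary target $(d',d'')$; your single-step (a)$\Rightarrow$(c) with the prime-by-prime choice of~$e'$ is a mild streamlining of the same idea, and your observation that the $\gcd$ automatically clips the ``overshoot'' at~$3$ when $9\mid d$ is exactly what makes the one-shot version work.
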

  \begin{proof} (a)$\implies$(b): Suppose that $\xi=x_1+x_2\myj+x_3\phi+x_4\myj\phi\in\cO$ has order~3
    and $d_\xi'd_\xi''=\tilde d$. In view of Remark~\ref{rem:effectofsquaring},
    replacing $\xi$ by $\xi^2$ if necessary, we may suppose that 3 does not
    divide~$x_1+1$, and hence 3 does not divide~$d_\xi''$. Let $e'=d_\xi''$ and $e''=d/e'=3d_\xi'$. Then
    $\gcd(e',e'')=1$ and $e'e''=d$. By Lemma~\ref{lem:AtkinLehnerconstruction}, there is an Atkin-Lehner element $w=w_{e'}$
    associated with~$e'$. Let $\eta=w\xi w^{-1}$. Then by Lemma~\ref{lem:atkinlehneraction},
    $d_\eta'=\gcd(e',x_1+1)\gcd(e'',x_1)$. Now $e'=d_\xi''$ is a divisor
    of~$x_1+1$, and so $\gcd(e',x_1+1)=e'=d_\xi''$. Also, $e''=3d_\xi'$ is a divisor of~$3\tilde d=d$,
    and so $\gcd(e'',x_1)$ is a divisor of~$\gcd(d,x_1)=d_\xi'$. As $d_\xi'$ divides~$x_1$ and $3d_\xi'=e''$,
    we have $\gcd(e'',x_1)=d_\xi'$.
    Hence $d_\eta'=d_\xi'd_\xi''=\tilde d$. Applying Lemma~\ref{lem:atkinlehneraction} to $\eta^2=w\xi^2w^{-1}$, we have
    $d_\eta''=d_{\eta^2}'=\gcd(e',x_1)\gcd(e'',x_1+1)$. Now $e'=d_\xi''$, is a divisor of~$x_1+1$, and so
    $\gcd(e',x_1)=1$, while $\gcd(e'',x_1+1)=\gcd(3d_\xi',x_1+1)=1$ because 3 does not divide $x_1+1$ and
    $d_\xi'$ is a divisor of~$x_1$. Hence $(d_\eta',d_\eta'')=(\tilde d,1)$.

    (b)$\implies$(c): Suppose that $\xi=x_1+x_2\myj+x_3\phi+x_4\myj\phi\in\cO$ has order~3
    and $(d_\xi',d_\xi'')=(\tilde d,1)$. Suppose that $d',d''\ge1$ are coprime, and that $d'd''=\tilde d$.
    In view of Remark~\ref{rem:effectofsquaring}, we may suppose that 3 does not divide $d'$.
    Let $e'=3d''$, and let $e''=d'$. Then $\gcd(e',e'')=1$ and $e'e''=d$. Let $w=w_{e'}$ be an
    Atkin-Lehner element associated with~$e'$, and let $\eta=w\xi w^{-1}$. By Lemma~~\ref{lem:atkinlehneraction}, $d_\eta'=\gcd(e',x_1+1)\gcd(e'',x_1)$.
    Now $\gcd(e'',x_1)=\gcd(d',x_1)=d'$ because $d'd''=\tilde d=d_\xi'$ divides~$x_1$. Also,
    $\gcd(e',x_1+1)=\gcd(3d'',x_1+1)$ divides $\gcd(3\tilde d,x_1+1)=\gcd(d,x_1+1)=d_\xi''=1$. So $d_\eta'=d'$.
    Now applying Lemma~\ref{lem:atkinlehneraction} to $\eta^2=w\xi^2w^{-1}$, we have
    $d_\eta''=d_{\eta^2}'=\gcd(e',x_1)\gcd(e'',x_1+1)$. Now $e'=3d''$, and $d''$ divides $\tilde d=\gcd(d,x_1)$.
    So $d''$ divides $\gcd(3d'',x_1)$. But $3d''$ does not divide~$x_1$, as otherwise (as $\gcd(d',3d'')=1$),
    $d$ would divide~$x_1$, contradicting $\gcd(d,x_1)=\tilde d$. So $\gcd(e',x_1)=d''$. Finally,
    $\gcd(e'',x_1+1)=\gcd(d',x_1+1)$ is a divisor of $\gcd(d,x_1+1)=d_\xi''=1$. So $d_\eta''=d''$, and
    so $(d_\eta',d_\eta'')=(d',d'')$.    
   
    (c)$\implies$(a): This is obvious.
  \end{proof}

  \begin{proof}[Proof of Theorem~\ref{thm:Cdcount}(c)(ii)]
    Assume $3\mid d$, write $\tilde d=d/3$, and suppose that $\tilde d\equiv2$ (mod~3).
    Let us prove that $C_d=2^{r+1}$, where $r$ is the number of prime divisors of~$d$.
      By Theorem~\ref{thm:bezoutresult}(b), there exist $u,v\in\cL$ so that $\tilde du-v=\epsilon\in\{-1,1\}$ and so that
      $3\nmid uv$. Then $\epsilon\equiv2\cdot1-1=1$ (mod~3), so $\epsilon=1$. Let $x_1=v$. Then 
      $3x_1(x_1+1)=3\tilde duv=duv$. We can write $uv=x_3^2-x_3x_4+x_4^2$. Then $\xi=x_1+x_2\myj+x_3\phi+x_4\myj\phi\in\cO$
      has order~3, and $d_\xi'=\gcd(x_1,d)=\gcd(v,3\tilde d)=1$, while $d_\xi''=\gcd(x_1+1,d)=\gcd(\tilde du,3\tilde d)=\tilde d$.
      So $(d_\xi',d_\xi'')=(1,\tilde d)$. By the (a)$\implies$(c) part of Lemma~\ref{lem:ddashddoubledasheqtilded}, for each
      of the $2^{r-1}$ pairs $(d',d'')$ of positive integers such that $\gcd(d',d'')=1$ and $d'd''=\tilde d$, there is an $\eta\in\cO$
      of order~3 such that $(d_\eta',d_\eta'')=(d',d'')$. For each such~$\eta$, $\eta^*=\phi\xi^2\phi^{-1}$ has the same property,
      and is not conjugate to~$\eta$, by Proposition~\ref{prop:xistarconjugatetoxi}. So we have exactly $2\times 2^{r-1}$ conjugacy classes of elements~$\eta$ of order~3 such
      that $d_\eta'd_\eta''=\tilde d$. Add to these the $2^r$ conjugacy classes provided by Proposition~\ref{prop:ddashddoubledasheqd},
      and we have in total $2\times2^r$  conjugacy classes.
  \end{proof}

  \begin{proof}[Proof of Theorem~\ref{thm:Cdcount}(b)]
     Suppose that $9\mid d$.  Let us prove that $C_d=2^r$ or~$3\times2^r$, where $r$ is the number of prime divisors of~$d$.
    By Proposition~\ref{prop:ddashddoubledasheqd}, there are exactly~$2^r$ conjugacy
      classes of $\xi$'s for which $d_\xi'd_\xi''=d$. So if $C_d\ne2^r$, there must be a $\xi$ of order~3
      so that $d_\xi'd_\xi''=\tilde d$.  Now $\tilde d=d/3$ also has $r$ distinct prime factors. By the (a)$\implies$(c) part of Lemma~\ref{lem:ddashddoubledasheqtilded}, for each
        of the $2^r$ pairs $(d',d'')$ of coprime positive integers such that $d'd''=\tilde d$,
        there is an $\xi\in\cO$ of order~3 such that $(d_\xi',d_\xi'')=(d',d'')$. For each such~$\xi$, $\xi^*=\phi\xi^2\phi^{-1}$ has the same property,
        and is not conjugate to~$\xi$, by Proposition~\ref{prop:xistarconjugatetoxi}. So we have exactly $2\times 2^r$ conjugacy classes of
        elements~$\xi$ of order~3 such that $d_\xi'd_\xi''=\tilde d$. Add to these the $2^r$ conjugacy classes
        provided by Proposition~\ref{prop:ddashddoubledasheqd}, and we have in total $3\times2^r$  conjugacy classes.
  \end{proof}

\end{section}
\begin{section}{The conjectures}\label{sec:conjectures}

  \begin{proposition}\label{prop:compareconjectures} Suppose that $d$ is divisible by~9, and write $\tilde d=d/3$.
    Then the following are equivalent:
    \begin{itemize}
    \item[(a)] Conjecture~2 is true for~$d$,
    \item[(b)] Conjecture~1 is true for all coprime $d',d''\ge1$ such that $d'd''=\tilde d$,
    \item[(c)] Conjecture~1 is true for some coprime $d',d''\ge1$ such that $d'd''=\tilde d$,
    \item[(d)] There is a $\xi\in\cO(d)$ of order~3 so that $d_\xi'd_\xi''=\tilde d$.
    \end{itemize}
  \end{proposition}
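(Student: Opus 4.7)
The plan is to close the cycle (b)$\implies$(c)$\implies$(d)$\implies$(b), which together with the separate equivalence (a)$\Longleftrightarrow$(d) proves the proposition; (b)$\implies$(c) is trivial.

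For (a)$\Longleftrightarrow$(d), I would recount $C_d$ as in the proof of Theorem~\ref{thm:Cdcount}(b). Corollary~\ref{cor:ddashddoubledashvalues} restricts $d_\xi'd_\xi''$ to $\{d,\tilde d\}$, and Proposition~\ref{prop:ddashddoubledasheqd} contributes exactly $2^r$ classes with $d_\xi'd_\xi''=d$. If (d) holds, then Lemma~\ref{lem:ddashddoubledasheqtilded} produces, for each of the $2^r$ coprime factorizations $d'd''=\tilde d$, an order-3 element with $(d_\xi',d_\xi'')=(d',d'')$; Proposition~\ref{prop:xistarconjugatetoxi} shows $\xi$ and $\xi^*$ are not $\cO^\times$-conjugate whenever $d_\xi'd_\xi''\ne d$, and Proposition~\ref{prop:mainconjugacyresult} says those two classes exhaust the fibre. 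Hence (d) contributes $2\cdot 2^r$ further classes, giving $C_d=3\cdot 2^r$, which is (a); conversely, if (d) fails then $C_d=2^r$ and (a) fails.

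For (c)$\implies$(d), given $u,v\in\cL$ and coprime $d',d''$ with $d'd''=\tilde d$, $d'u-d''v=\epsilon\in\{-1,1\}$ and $3\nmid uv$, I would first interchange $d'$ with $d''$ (and $u$ with $v$) if $\epsilon=+1$ in order to reduce to $\epsilon=-1$. Then set $x_1=d'u$, so that $x_1+1=d''v$, and use $uv\in\cL$ to write $uv=x_3^2-x_3x_4+x_4^2$. Lemma~\ref{lem:xiorder3conds} then gives an order-3 element $\xi=x_1+(2x_1+1)\myj+x_3\phi+x_4\myj\phi$. The combined hypotheses $3\nmid u$, $3\nmid v$, and $\gcd(u,d'')=\gcd(v,d')=1$ (the latter from $d'u-d''v=\pm 1$) force $\gcd(x_1,d)=d'$ and $\gcd(x_1+1,d)=d''$, so $d_\xi'd_\xi''=\tilde d$, which is (d).

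For (d)$\implies$(b), fix any coprime $d',d''\ge 1$ with $d'd''=\tilde d$, and invoke Lemma~\ref{lem:ddashddoubledasheqtilded} to upgrade (d) into an order-3 $\xi$ with $(d_\xi',d_\xi'')=(d',d'')$. Set $u=x_1/d'$ and $v=(x_1+1)/d''$. Then $d'u-d''v=-1$ forces $\gcd(u,v)=1$, and \eqref{eq:xiorder3cond} yields $uv=x_3^2-x_3x_4+x_4^2\in\cL$. Using the characterization of $\cL$ recalled in the introduction (primes $\equiv 2\pmod{3}$ occur only to even powers), coprimality of $u$ and $v$ distributes each such prime factor entirely into one of them, so $u,v\in\cL$ individually. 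Finally $3d'\mid d$ together with $\gcd(x_1,d)=d'$ excludes $3\mid u$, and symmetrically $3\nmid v$. The main obstacle I anticipate is exactly this last descent from $uv\in\cL$ to $u,v\in\cL$ separately, which requires a slightly more committal appeal to the multiplicative structure of $\cL$ than is used elsewhere in the paper.
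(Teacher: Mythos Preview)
Your argument is correct and uses the same ingredients as the paper, with a slightly tidier organization: the paper closes the cycle (c)$\Rightarrow$(d)$\Rightarrow$(a)$\Rightarrow$(b)$\Rightarrow$(c), so that its (a)$\Rightarrow$(b) step first recounts classes to recover, in effect, the existence statement~(d) before extracting $u,v$ from~$\xi$; you instead prove (d)$\Rightarrow$(b) directly via Lemma~\ref{lem:ddashddoubledasheqtilded} and handle (a)$\Leftrightarrow$(d) separately, which avoids that small redundancy. One minor omission in your (d)$\Rightarrow$(b): you should note that $x_1\notin\{0,-1\}$ (since $d_\xi'd_\xi''\ne d$ rules out $\xi\in\{\myj,\myj^2\}$), so that $u=x_1/d'$ and $v=(x_1+1)/d''$ have the same sign; if both are negative, pass to $(-u,-v)$, which satisfy $d'(-u)-d''(-v)=1$, before invoking the multiplicative characterization of~$\cL$. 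The paper makes exactly this sign adjustment, and your ``descent'' $uv\in\cL,\ \gcd(u,v)=1\Rightarrow u,v\in\cL$ is the same fact the paper uses, justified by the description of~$\cL$ given in the introduction.
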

  \begin{proof}Note that $\tilde d$, like~$d$, has $r$ distinct prime divisors, one of which is~3.
      So there are exactly~$2^r$ pairs $(d',d'')$ such that $d'd''=\tilde d$ and $\gcd(d',d'')=1$.     

    (c)$\implies$(d): Suppose that $d'd''=\tilde d$ and $\gcd(d',d'')=1$, and that
    Conjecture~1 is true for $(d',d'')$. Then there exist $u,v\in\cL$ such
    that $d'u-d''v=\epsilon\in\{-1,1\}$ and $3\nmid uv$. If $\epsilon=1$, let $x_1=d''v$. Then
  \begin{displaymath}
        \frac{3x_1(x_1+1)}{d}=\frac{3(d''v)(d'u)}{3d'd''}=uv,
      \end{displaymath}
  which is in~$\cL$ and not divisible by~3. We can write $uv=x_3^2-x_3x_4+x_4^2$, and then
  $\xi=x_1+(2x_1+1)\myj+x_3\phi+x_4\myj\phi$
      has order~3. Also, $d_\xi'=\gcd(d,x_1)=\gcd(3d'd'',d''v)=d''$ because $\gcd(3d',v)=1$, and
      $d_\xi''=\gcd(d,x_1+1)=\gcd(3d'd'',d'u)=d'$, because $\gcd(3d'',u)=1$. So $d_\xi'd_\xi''=d''d'=\tilde d$.
      Similarly, if $\epsilon=-1$, we take $x_1=d'u$. 

      (d)$\implies$(a): Suppose there is a $\xi\in\cO$ of order~3 so that
      $d_\xi'd_\xi''=\tilde d$. By Proposition~\ref{prop:ddashddoubledasheqd}, there are already $2^r$
      conjugacy classes of elements $\eta\in\cO$ of
      order~3 such that $d_\eta'd_\eta''=d$, and so $C_d>2^r$. Hence $C_d=3\times2^r$
      by Theorem~\ref{thm:Cdcount}(b). So Conjecture~2 is true for~$d$.

      (a)$\implies$(b): Suppose that Conjecture~2 holds for~$d$. By Proposition~\ref{prop:ddashddoubledasheqd},
      there are only $2^r$ $\cO^\times$-conjugacy classes of elements~$\xi$ of order~3 for which $(d_\xi',d_\xi'')=d$.
      The remaining conjugacy classes of elements of order~3
      must consist of elements~$\xi$ for which $(d_\xi',d_\xi'')=\tilde d$. As Conjecture~2 is true for~$d$, there are $2\times2^r$
      of these remaining conjugacy classes. But there are only $2^r$ pairs $(d',d'')$ of coprime
      integers whose product is~$\tilde d$, and by Proposition~\ref{prop:mainconjugacyresult} for each such pair there are at most two
      conjugacy classes of elements $\xi$ for which $(d_\xi',d_\xi'')=(d',d'')$. So Conjecture~2 being true
      for~$d$ implies that for each such pair $(d',d'')$
      there are indeed two conjugacy classes of elements~$\xi$ for which $(d_\xi',d_\xi'')=(d',d'')$.

       Now suppose that $d'd''=\tilde d$ and $\gcd(d',d'')=1$. To show that Conjecture~1 holds for $(d',d'')$, we may
      assume that $3\mid d'$. 
      Pick $\xi=x_1+(2x_1+1)\myj+x_3\phi+x_4\myj\phi$ of order~3 so that $(d_\xi',d_\xi'')=(d',d'')$.
      By Proposition~\ref{prop:xistarconjugatetoxi}, $d_\xi'd_\xi''=\tilde d$ implies that 3 does not divide~$x_3^2-x_3x_4+x_4^2$,
      and $x_1(x_1+1)=\tilde d(x_3^2-x_3x_4+x_4^2)$ by~\eqref{eq:xiorder3cond}.

      Write $d=3^kd^*$, where $k\ge2$ and $3\nmid d^*$. Then $3^{k-1}\mid x_1(x_1+1)$. But $3\mid d'$, $d'=d_\xi'$ and $d_\xi'\mid x_1$.
      So $3\mid x_1$, and we can write $x_1=3^{k-1}x_1'$, where $3\nmid x_1'$. Then 
      \begin{displaymath}
        d'=d_\xi'=\gcd(d,x_1)=\gcd(3^kd^*,3^{k-1}x_1')=3^{k-1}\gcd(3d^*,x_1')=3^{k-1}\gcd(d^*,x_1'),
        \end{displaymath}
      and $d''=d_\xi''=\gcd(d,x_1+1)=\gcd(3^kd^*,x_1+1)=\gcd(d^*,x_1+1)$. As $d^*$ divides~$x_1(x_1+1)$ and $\gcd(x_1,x_1+1)=1$, we can write $d^*=d_1d_2$,
      where $d_1=\gcd(d^*,x_1)$ and $d_2=\gcd(d^*,x_1+1)$. So $d'=3^{k-1}d_1$ and $d''=d_2$.
         There are integers~$u,v$ for which $x_1'=d_1u$ and $x_1+1=d_2v$.      So
      \begin{displaymath}
        uv=\frac{3(3^{k-1}d_1u)(d_2v)}{3^kd_1d_2}=\frac{3x_1(x_1+1)}{d}=x_3^2-x_3x_4+x_4^2\in\cL,
        \end{displaymath}
      and $3\nmid uv$. Now $d'u=x_1$ and $d''v=x_1+1$ implies that $\gcd(u,v)=1$. If $u,v\ge0$, this and $uv\in\cL$ implies that $u,v\in\cL$,
      and $d'u-d''v=x_1-(x_1+1)=-1$. If $u,v\le0$, then $-u,-v\in\cL$ and $d'(-u)-d''(-v)=1$. So Conjecture~1 is true for~$(d',d'')$,
      and (b)~holds.

      (b)$\implies$(c): This is obvious.      
  \end{proof}
   Let us write $\cH=\cH(d)=\{x+y\phi_d:x,y\in\Q(\myj)\}$
  and~$\cO(d)=\{x+y\phi_d:x,y\in\Z[\myj]\}$ (where $\phi_d=\phi$ satisfies $\phi_d ^2=d$). Fix a $\theta\in\Z[\myj]$ such that $\theta\bar\theta=3$.
  The map $f:x+y\phi_{3d}\mapsto x+\theta y\phi_d$ is an algebra isomorphism $\cH(3d)\to\cH(d)$, with
  $f(\cO(3d))\subsetneqq\cO(d)$ and $f(\cO^\times(3d))\subset\cO^\times(d)$.
  We choose $\theta=1-\myj$ below.

  Assume that $d\ge1$ has exactly $r$ distinct prime factors. Let's denote by $\cG$ the
  set of~$d\ge1$ for which either $3^2\nmid d$ (so that $C_d$ is as stated in
  Theorem~\ref{thm:Cdcount}(a) and~(c)), or $3^2\mid d$ and $C_d=3\times2^r$, so that
  Conjecture~2 is true for~$d$. In particular,
  \begin{itemize}
  \item[(i)] $3\nmid d\implies d\in\cG$, and $C_d=2^r$;
  \item[(ii)] $3^1\|d\implies d\in\cG$, and $C_d=2^r$ or~$C_d=2^{r+1}$ according as $\tilde d\equiv1$ or $\tilde d\equiv2$ (mod~3).
  \end{itemize}
  \begin{lemma}\label{lem:isomorphismequivs}The following are equivalent:
    \begin{itemize}
    \item[(a)] $f$ maps $\cO^\times(3d)$ onto~$\cO^\times(d)$,
    \item[(b)] $C_{3d}=C_d=2^r$,
    \item[(c)] $3\mid d$, and there are no $u,v\in\cL$ such that
      $3\nmid uv$ and $u-dv\in\{-1,1\}$.
    \item[(d)] $3\mid d$, and $3d\not\in\cG$.
    \end{itemize}
  \end{lemma}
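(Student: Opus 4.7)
The plan is to establish the cycle (a)$\Rightarrow$(b)$\Rightarrow$(d)$\Rightarrow$(c)$\Rightarrow$(a). Two structural facts about~$f$ drive everything. First, $f$ preserves reduced norms:
\begin{displaymath}
\myNrd_d(f(x+y\phi_{3d}))=x\bar x-d(\theta y)(\bar\theta\bar y)=x\bar x-3d\,y\bar y=\myNrd_{3d}(x+y\phi_{3d}),
\end{displaymath}
so $f(\cO^\times(3d))\subseteq\cO^\times(d)$. Second, an element $a+b\phi_d\in\cO(d)$ lies in the image $f(\cO(3d))$ if and only if $\theta\mid b$ in~$\Z[\myj]$, equivalently $3\mid b\bar b$ (since $\theta$ is prime in~$\Z[\myj]$ with $(\theta)=(\bar\theta)$ and $\theta\bar\theta=3$). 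Combining this with Proposition~\ref{prop:xistarconjugatetoxi}, $f$ induces a bijection between the order-3 elements of $\cO(3d)$ and those order-3 elements $\eta=a+b\phi_d\in\cO(d)$ satisfying $d_\eta'd_\eta''=d$.

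For (a)$\Rightarrow$(b): if $\cO^\times(d)=f(\cO^\times(3d))$, then every order-3 element $\eta\in\cO(d)$ is a unit, hence lies in $f(\cO(3d))$, so $\theta\mid b$ and thus $d_\eta'd_\eta''=d$; Proposition~\ref{prop:ddashddoubledasheqd} then gives $C_d=2^r$. Moreover (a) promotes the bijection above to a bijection on conjugacy classes, yielding $C_{3d}=2^r$ as well. The case $3\nmid d$ is excluded because Theorem~\ref{thm:Cdcount}(c) applied to~$3d$ would force $C_{3d}\in\{2^{r+1},2^{r+2}\}$, which contradicts $C_{3d}=2^r$. For (b)$\Rightarrow$(d): the equality $C_{3d}=2^r$ again forces $3\mid d$ by the same exclusion; then $9\mid 3d$, so Theorem~\ref{thm:Cdcount}(b) restricts $C_{3d}$ to $\{2^r,3\cdot 2^r\}$, and $C_{3d}=2^r$ means $3d\notin\cG$.

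For (d)$\Rightarrow$(c) I would invoke Proposition~\ref{prop:compareconjectures} with $3d$ in the role of its ``$d$'' (so its $\tilde d$ is our~$d$): $3d\notin\cG$ is the failure of Conjecture~2 for~$3d$, which by that proposition is equivalent to the failure of Conjecture~1 for every coprime pair $(d',d'')$ with $d'd''=d$; specializing to $(d',d'')=(1,d)$ yields (c). For (c)$\Rightarrow$(a): any $\alpha=a+b\phi_d\in\cO^\times(d)$ satisfies $u-dv=\pm 1$ with $u=a\bar a,\ v=b\bar b\in\cL$; since $3\mid d$ we have $u\equiv\pm 1\pmod 3$, so $3\nmid u$, and (c) then forces $3\mid v$, i.e., $\theta\mid b$, so $\alpha\in f(\cO^\times(3d))$.

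The main obstacle is the step (d)$\Rightarrow$(c), which is not a direct computation but rests on Proposition~\ref{prop:compareconjectures}---itself the culmination of the paper's analysis of how order-3 elements distribute between the $d_\xi'd_\xi''=d$ and $d_\xi'd_\xi''=d/3$ regimes. Once that proposition and the $\theta$-divisibility description of $f(\cO(3d))$ inside~$\cO(d)$ are in hand, the remaining implications reduce to bookkeeping with Theorem~\ref{thm:Cdcount}.
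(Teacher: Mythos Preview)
Your argument is correct, and the overall logical scaffolding matches the paper's. The differences are in the local routing and in one step where your version is actually cleaner.

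The paper proves the cycle (a)$\Rightarrow$(b)$\Rightarrow$(c)$\Rightarrow$(a) and then handles (c)$\iff$(d) separately via Proposition~\ref{prop:compareconjectures}; you instead fold (d) into the cycle as (b)$\Rightarrow$(d)$\Rightarrow$(c). For (a)$\Rightarrow$(b) the paper argues that $f$ is a group isomorphism (so $C_{3d}=C_d$) and then rules out $C_{3d}>2^r$ by computing that $f(\xi^*)$ is always $\cO^\times(d)$-conjugate to $f(\xi)$ (via Lemma~\ref{lem:interchangex3x4}), whereas you count directly using your bijection with the order-3 elements of $\cO(d)$ having $d_\eta'd_\eta''=d$ together with Proposition~\ref{prop:ddashddoubledasheqd}. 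For (b)$\Rightarrow$(c) the paper avoids Proposition~\ref{prop:compareconjectures}: assuming such $u,v$ exist it builds an explicit order-3 element in $\cO(3d)$ with $3\nmid x_3^2-x_3x_4+x_4^2$, so that $\xi^*$ is not conjugate to $\xi$ and $C_{3d}>2^r$. Your detour through (d) is legitimate and not circular, since Proposition~\ref{prop:compareconjectures} is established earlier and independently of this lemma.

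Your (c)$\Rightarrow$(a) is tighter than the paper's. The paper first fixes an auxiliary pair $x,y\in\mathfrak o$ with $x\bar x-dy\bar y=1$, multiplies an arbitrary $\alpha\in\cO^\times(d)$ against $x+y\phi_d$, and reads off $\theta\mid b$ from the resulting norm identity. You observe directly that $\myNrd(\alpha)=a\bar a-d\,b\bar b\in\{-1,1\}$ already has the form $u-dv$ with $u,v\in\cL$ and $3\nmid u$, so (c) forces $3\mid b\bar b$ and hence $\theta\mid b$. This bypasses the auxiliary element entirely.
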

  
  \begin{proof}(a)$\implies$(b): When $f$ maps $\cO^\times(3d)$ onto~$\cO^\times(d)$, it gives
    an isomorphism $\cO^\times(3d)\to\cO^\times(d)$. So $C_{3d}=C_d$ must hold. If $3\nmid d$
    then $d\in\cG$, and $C_d=2^r$, and as $3d$ then has $r+1$ distinct prime
    factors, $C_{3d}\ge2^{r+1}$ by Proposition~\ref{prop:ddashddoubledasheqd}. So $d$ must be divisible by~3.

    If $\xi=x_1+(2x_1+1)\myj+x_3\phi_{3d}+x_4\myj\phi_{3d}\in\cO(3d)$ has order~3, then
    $f(\xi)=x_1'+(2x_1'+1)\myj+x_3'\phi_{d}+x_4'\myj\phi_{d}$ for $x_1'=x_1$, $x_3'=x_3+x_4$ and $x_4'=2x_4-x_3$.
    Writing $\xi^*=x_1+(2x_1+1)\myj+(x_4-x_3)\phi_{3d}+x_4\myj\phi_{3d}$ as usual, notice that
    $f(\xi^*)=x_1'+(2x_1'+1)\myj+x_4'\phi_{d}+x_3'\myj\phi_{d}$, which is conjugate to~$f(\xi)$ by
    Lemma~\ref{lem:interchangex3x4}. If $C_{3d}>2^r$, then by Proposition~\ref{prop:ddashddoubledasheqd}
    there exist $\xi\in\cO^\times(3d)$ such that $\xi$ is not conjugate to~$\xi^*$, and so $f$ cannot
    be an isomorphism $\cO^\times(3d)\to\cO^\times(d)$. So $C_{3d}=2^r$ must hold.

    (b)$\implies$(c): If (b) holds, then as in the first paragraph, 3 must divide~$d$.
    Suppose that there exist $u,v\in\cL$ so that $u-dv=\epsilon\in\{-1,1\}$ and $3\nmid uv$. Then $\epsilon=1$ must hold.
    Let $x_1=dv$ and then $3x_1(x_1+1)=3duv=(3d)(x_3^2-x_3x_4+x_4^2)$ for some integers $x_3,x_4$.
    Then $\xi=x_1+(2x_1+1)\myj+x_3\phi_{3d}+x_4\myj\phi_{3d}\in\cO(3d)$ has order~3, and the fact that 3 does not
    divide $uv=x_3^2-x_3x_4+x_4^2$ implies that $\xi^*$ is not conjugate to~$\xi$, by Proposition~\ref{prop:xistarconjugatetoxi}.
    So $C_{3d}>2^r$, contradicting~(b). 

    (c)$\implies$(a): Suppose that (c)~holds. By Theorem~\ref{thm:bezoutresult}(a), there exist $u,v\in\cL$
    such that $u-dv=\epsilon\in\{-1,1\}$. Since $3\mid d$, $\epsilon=1$ and $3\nmid u$ must hold.
    Write $u=x\bar x$ and $v=y\bar y$. By (c), $v$ must be divisible by~3.
    Thus $y$ must be in~$(1-\myj)\mathfrak o$, and $x\not\in(1-\myj)\mathfrak o$. Now let $\alpha=a+b\phi\in\cO^\times(d)$.
    Then $\alpha(x+y\phi)=x'+y'\phi$ for $x'=ax+b\bar yd$ and $y'=ay+b\bar x$. One calculates that
    $x'{\bar x}'-dy'{\bar y}'=\epsilon\myNrd(\alpha)=\myNrd(\alpha)\in\{-1,1\}$ and so
    $y'\in(1-\myj)\mathfrak o$ must hold. This implies that $b\in(1-\myj)\mathfrak o$ for all $a+b\phi\in\cO^\times(d)$,
    and so each $\alpha\in\cO^\times(d)$ is in $f(\cO^\times(3d))$. So (a)~holds.

    (c)$\iff$(d): There do not exist $u,v\in\cL$
    such that $3\nmid uv$ and $u-dv=\epsilon\in\{-1,1\}$, if and only if $3d\not\in\cG$, by
    the (a)$\iff$(b) part of Proposition~\ref{prop:compareconjectures}.
  \end{proof}

  \begin{corollary}\label{cor:multby3}Suppose that $d$ is divisible by~9, and that $d\in\cG$. Then $3d\in\cG$.
  \end{corollary}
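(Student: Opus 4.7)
The plan is to deduce this directly from Lemma~\ref{lem:isomorphismequivs} by contradiction, using the fact that multiplying by~$3$ does not change the count~$r$ of distinct prime factors since $3\mid d$ already.

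First I would record the setup. Since $9\mid d$, we have $3\mid d$, so $3d$ has exactly the same $r$ distinct prime factors as~$d$; and $9\mid 3d$ as well (in fact $27\mid 3d$). The hypothesis $d\in\cG$ together with $9\mid d$ gives, by the very definition of~$\cG$, that $C_d=3\times 2^r$.

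Next I would aim to show $C_{3d}=3\times 2^r$, which by definition of~$\cG$ (applied to~$3d$, noting $9\mid 3d$) is exactly the statement $3d\in\cG$. By Theorem~\ref{thm:Cdcount}(b) applied to~$3d$, the value $C_{3d}$ is either $2^r$ or $3\times 2^r$, so it suffices to rule out $C_{3d}=2^r$. Suppose for contradiction that $C_{3d}=2^r$. Then $3d\not\in\cG$, and since also $3\mid d$, condition~(d) of Lemma~\ref{lem:isomorphismequivs} holds. The equivalence (d)$\Leftrightarrow$(b) of that lemma then forces $C_d=2^r$, contradicting $C_d=3\times 2^r$. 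Hence $C_{3d}=3\times 2^r$ and $3d\in\cG$.

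There is essentially no obstacle here: the entire content is packed into Lemma~\ref{lem:isomorphismequivs}, which was proved just above. The only points needing minor care are the bookkeeping about the number of prime factors of~$3d$ versus~$d$ (equal because $3\mid d$) and the observation that $9\mid 3d$ so that the ``$9\mid d$'' branch of the definition of~$\cG$ is the relevant one when testing whether $3d\in\cG$.
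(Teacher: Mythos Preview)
Your proof is correct and follows essentially the same approach as the paper: both argue from $C_d=3\times2^r\ne2^r$ via the equivalences in Lemma~\ref{lem:isomorphismequivs}. The only cosmetic difference is that you invoke (d)$\Leftrightarrow$(b) of that lemma directly, whereas the paper uses (b)$\Leftrightarrow$(c) and then appeals to Proposition~\ref{prop:compareconjectures}; since the proof of (c)$\Leftrightarrow$(d) in Lemma~\ref{lem:isomorphismequivs} already goes through Proposition~\ref{prop:compareconjectures}, the two routes are equivalent.
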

  \begin{proof}By hypothesis, $C_d=3\times2^r$. By (b)$\iff$(c) in Lemma~\ref{lem:isomorphismequivs},
    there exist $u,v\in\cL$ so that $u-dv\in\{-1,1\}$ so that $3\nmid uv$. So by
    (c)$\implies$(a) in Proposition~\ref{prop:compareconjectures}, $3d\in\cG$.
  \end{proof}

  \begin{corollary}\label{cor:dstar2mod3}Suppose that $d=3^kd^*$, where $d^*\equiv2$ (mod~3). Then $d\in\cG$.
  \end{corollary}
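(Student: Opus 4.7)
The proof splits on $k$. For $k=0$ we have $3\nmid d=d^*$, and for $k=1$ we have $3^1\|d=3d^*$; in both cases $d\in\cG$ is immediate from items (i)--(ii) of the characterisation of $\cG$ given just before Lemma~\ref{lem:isomorphismequivs}.

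The essential case is $k=2$. Here the plan is to apply Lemma~\ref{lem:isomorphismequivs} with its ``$d$'' set equal to $3d^*$, so that the lemma's ``$3d$'' becomes our $9d^*$. Writing $s$ for the number of distinct prime divisors of $d^*$, the $r$ of that lemma (the number of primes of $3d^*$) equals $s+1$. By Theorem~\ref{thm:Cdcount}(c)(ii) applied to $3d^*$, whose $\tilde d$ is $d^*\equiv 2\pmod 3$, one obtains $C_{3d^*}=2^{s+2}$. This is not $2^{s+1}$, so condition~(b) of Lemma~\ref{lem:isomorphismequivs} fails; by the equivalence of (b) and~(d), condition~(d) fails as well. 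Since $3\mid 3d^*$ holds automatically, the failure of~(d) forces $9d^*\in\cG$.

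For $k\ge 3$ I would close by induction on $k$: granted $3^{k-1}d^*\in\cG$ from the previous step, the hypothesis $9\mid 3^{k-1}d^*$ is satisfied (because $k-1\ge 2$), so Corollary~\ref{cor:multby3} yields $3^kd^*\in\cG$. The genuine subtlety worth highlighting is that Corollary~\ref{cor:multby3} cannot itself launch the induction, since its $9\mid d$ hypothesis fails for $d=3d^*$; the entire argument pivots on the $k=2$ application of Lemma~\ref{lem:isomorphismequivs}, which succeeds only because Theorem~\ref{thm:Cdcount}(c)(ii) pins down $C_{3d^*}$ exactly.
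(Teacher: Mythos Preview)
Your proof is correct and follows essentially the same route as the paper. The only organisational difference is that the paper runs a single induction for all $k\ge2$, handling the two cases $k=2$ and $k\ge3$ inside the inductive step (via $C_{3^{k-1}d^*}=2\times2^r$ or $3\times2^r$, respectively), whereas you separate the $k=2$ base case from the $k\ge3$ step and invoke Corollary~\ref{cor:multby3} for the latter; since Corollary~\ref{cor:multby3} is itself proved via Lemma~\ref{lem:isomorphismequivs}, the content is the same.
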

  \begin{proof}When $k=0$ or~1, $d\in\cG$ by~(i) and~(ii) above. We use induction. So suppose that $k\ge2$ and
    that $d=3^{k-1}d^*\in\cG$. Then either $C_d=3\times2^r$ (if $k\ge3$)
    or $C_d=2\times2^r$ (if $k=2$). Either way, $C_d>2^r$, and so condition~(b) in Lemma~\ref{lem:isomorphismequivs}
    does not hold. So by (b)$\iff$(d) in Lemma~\ref{lem:isomorphismequivs}, $3d=3^kd^*\in\cG$.
  \end{proof}

  \begin{corollary}\label{cor:dstarloeschian}Suppose that $d\in\cL$. Then $d\in\cG$.
  \end{corollary}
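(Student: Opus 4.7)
The plan is to reduce the statement to a single key base case via the induction tools already established, and then to verify Conjecture~1 in that base case via an explicit algebraic identity.

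First I would factor $d=3^a d^*$ with $\gcd(3,d^*)=1$. Since $d\in\cL$, the characterization of $\cL$ as the set of integers of the form $3^b m_1 m_2^2$ forces $d^*\in\cL$, and since $3\nmid d^*$ and every element of $\cL$ is congruent to $0$ or $1$ mod~$3$, we get $d^*\equiv 1\pmod 3$. The cases $a=0$ and $a=1$ are immediate from items~(i) and~(ii) preceding Lemma~\ref{lem:isomorphismequivs}. For $a\ge 2$, Corollary~\ref{cor:multby3} applied inductively reduces the problem to proving the base case $9d^*\in\cG$.

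To establish $9d^*\in\cG$, I would invoke the implication (c)$\Rightarrow$(a) of Proposition~\ref{prop:compareconjectures}: it suffices to verify Conjecture~1 for a single coprime pair $(d_1,d_2)$ with $d_1 d_2=\tilde d=3d^*$. I would choose $(d_1,d_2)=(1,3d^*)$, so Conjecture~1 amounts to finding $u,v\in\cL$ with $3\nmid uv$ and $u-3d^*v\in\{-1,1\}$. The guess is $v=d^*$ and $u=3(d^*)^2+1$, for which $u-3d^*v=1$ by construction, $v\in\cL$ with $3\nmid v$ by hypothesis, and $u\equiv 1\pmod 3$ gives $3\nmid u$ automatically.

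The only nontrivial point—and the main obstacle—is verifying that $u=3(d^*)^2+1$ lies in $\cL$. The identity
\begin{displaymath}
3x^2+1=(x+1)^2-(x+1)(1-x)+(1-x)^2,
\end{displaymath}
an immediate expansion, shows that $3x^2+1=a^2-ab+b^2$ with $a=x+1$, $b=1-x$, uniformly for every integer~$x$. Applied at $x=d^*$, this exhibits $u$ as a Loeschian integer and finishes the proof. The charm of the argument is that once one spots this Loeschian representation of $3x^2+1$, no special property of~$d^*$ beyond being an integer is needed; the Loeschian hypothesis on~$d$ is used only to guarantee $d^*\in\cL$ so that $v=d^*$ qualifies.
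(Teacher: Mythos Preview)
Your proof is correct. Both you and the paper reduce via Corollary~\ref{cor:multby3} to a base case with $9\mid d$ and both ultimately rest on the same Loeschian identity $3x^2+1\in\cL$ (the paper uses the equivalent representation $(c-1)^2-(c-1)(2c)+(2c)^2=3c^2+1$, encoded in the coordinates $x_3=c-1$, $x_4=2c$ of an explicit order-3 element). The difference is in how that identity is deployed. The paper proves the stronger fact that $9c^2\in\cG$ for \emph{every} integer~$c$ by exhibiting an explicit $\xi\in\cO(9c^2)$ of order~3 with $d_\xi'd_\xi''=3c^2$; to reach this square case from $9d^*$ it first proves a short contrapositive lemma (if $d\not\in\cG$ and $v_1\in\cL$ with $3\nmid v_1$, then $dv_1\not\in\cG$) and applies it with $v_1=d^*$. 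Your route is more direct: you verify Conjecture~1 for the pair $(1,3d^*)$ using $v=d^*$ and $u=3(d^*)^2+1$, invoking Proposition~\ref{prop:compareconjectures}(c)$\Rightarrow$(a) immediately, and so never need the intermediate ``multiply by a Loeschian'' step. The trade-off is that your argument consumes the hypothesis $d^*\in\cL$ at the choice of~$v$, whereas the paper's detour yields the side result that $9c^2\in\cG$ unconditionally.
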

  \begin{proof}Write $d=3^k v_0$, where $k\ge0$ and $3\nmid v_0$.
    If $k=0$, then $d\in\cG$ by~(i) above.
    If $k=1$, then $d\in\cG$ by~(ii) above.

    Now suppose that $k\ge2$. If $d\not\in\cG$
    then $d_1=dv_1\not\in\cG$ for any Loeschian~$v_1$ such that $3\nmid v_1$. For $\tilde{d_1}=\tilde dv_1=3^{k-1}v_0v_1$, so if
    $u-\tilde dv_1v\in\{-1,1\}$ where $u,v\in\cL$ and $3\nmid uv$, then $u-\tilde d(v_1v)\in\{-1,1\}$ and
    $3\nmid v_1v$, so that by Proposition~\ref{prop:compareconjectures}, $d\in\cG$,
    contrary to hypothesis. In particular, if $d\not\in\cG$, then $dv_0=3^k(v_0)^2\not\in\cG$, and so $9v_0^2\not\in\cG$,
    by Corollary~\ref{cor:multby3}. But $d=9c^2\in\cG$ for any integer~$c\ge1$.
    For Lemma~\ref{lem:xiorder3conds} shows that $\xi=3c^2+(6c^2+1)\myj+(c-1)\phi+2c\myj\phi\in\cO$ has order~3,
    and $\gcd(9c^2,3c^2)=3c^2$ and $\gcd(9c^2,3c^2+1)=1$, so that $(d_\xi',d_\xi'')=(d/3,1)$.
  \end{proof}

  \begin{corollary}\label{cor:multby3tothenu}For any $d\ge1$, there is a $\nu\ge0$ so that $3^\nu d\in\cG$.
  \end{corollary}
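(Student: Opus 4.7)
Write $d = 3^k d^*$ with $\gcd(d^*, 3) = 1$. The plan is to handle three easy subcases with $\nu = 0$ and then address the remaining one.

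If $k \le 1$, then observations (i) and (ii) preceding this corollary give $d \in \cG$. If $k \ge 2$ and $d^* \equiv 2 \pmod 3$, Corollary~\ref{cor:dstar2mod3} gives $d \in \cG$. If $k \ge 2$ and $d^* \in \cL$, then $d = 3^k d^* \in \cL$ (since $\cL$ is closed under multiplication by~3) and Corollary~\ref{cor:dstarloeschian} gives $d \in \cG$. So in each of these, $\nu = 0$ suffices.

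The remaining case is $k \ge 2$, $d^* \equiv 1 \pmod 3$, and $d^* \notin \cL$. By the equivalence of (c) and (d) in Lemma~\ref{lem:isomorphismequivs} (applied to $D = 3^{\nu-1} d$), showing $3^\nu d \in \cG$ for some $\nu \ge 1$ reduces to finding Loeschian $u, v$ with $3 \nmid uv$ and $u - 3^{\nu-1} d\, v \in \{-1, 1\}$. Since $3 \mid 3^{\nu-1} d$, the equation forces $3 \nmid u$, so only $3 \nmid v$ must be engineered. Equivalently, by Proposition~\ref{prop:compareconjectures}, one wants an order-3 element $\xi \in \cO(3^\nu d)$ with $d_\xi' d_\xi'' = 3^{\nu+k-1} d^*$; taking $x_1 = 3^{\nu+k-1} d^* m$ with $\gcd(m, 3) = 1$ reduces this via~\eqref{eq:xiorder3cond} to the requirement that $m(3^{\nu+k-1} d^* m + 1) \in \cL$.

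The main obstacle is producing such $(\nu, m)$ for every $d^*$ in the residual case. Theorem~\ref{thm:bezoutresult}(a) provides Loeschian solutions of the B\'ezout equation for each~$\nu$ but offers no control on $v$ modulo~3. I would try to exploit the freedom to let the 3-adic valuation of~$D$ grow together with the explicit construction showing $9c^2 \in \cG$ used in the proof of Corollary~\ref{cor:dstarloeschian}, choosing $c$ and~$\nu$ so that one factor of $m(3^{\nu+k-1} d^* m + 1)$ is realized by that construction and the other is matched via a Loeschian B\'ezout relation. Once a single $\nu_0$ with $3^{\nu_0} d \in \cG$ is found, Corollary~\ref{cor:multby3} applied iteratively yields $3^\nu d \in \cG$ for every $\nu \ge \nu_0$, completing the proof.
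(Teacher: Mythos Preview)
Your proposal is incomplete. The first two paragraphs correctly dispose of the cases $k\le1$, $d^*\equiv2\pmod3$, and $d^*\in\cL$ with $\nu=0$, but in the residual case ($k\ge2$, $d^*\equiv1\pmod3$, $d^*\notin\cL$) you do not actually give a proof: you identify the obstacle (``producing such $(\nu,m)$'') and then only sketch what you ``would try''. Nothing in that sketch produces the required~$\nu$ for an arbitrary such~$d^*$, and the vague plan of combining the $9c^2\in\cG$ construction with a Loeschian B\'ezout relation is not carried out. Indeed, if that plan worked uniformly with $\nu$ bounded independently of~$d$, it would essentially settle Conjecture~1, which the paper leaves open.

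The paper's argument is both simpler and complete, and avoids your case split entirely. For $k\ge2$, take any element $\xi=x_1+(2x_1+1)\myj+x_3\phi_d+x_4\myj\phi_d\in\cO(d)$ of order~3 with $\xi\ne\myj,\myj^2$. Then $x_3^2-x_3x_4+x_4^2\ne0$; let $\nu$ be its exact $3$-adic valuation. Since $3\in\cL$, one can factor $x_3^2-x_3x_4+x_4^2=3^\nu(\hat x_3^2-\hat x_3\hat x_4+\hat x_4^2)$ with the second factor in~$\cL$ and coprime to~3. Then $\eta=x_1+(2x_1+1)\myj+\hat x_3\phi_{3^\nu d}+\hat x_4\myj\phi_{3^\nu d}\in\cO(3^\nu d)$ has order~3, and by Proposition~\ref{prop:xistarconjugatetoxi} the non-divisibility of $\hat x_3^2-\hat x_3\hat x_4+\hat x_4^2$ by~3 forces $d_\eta'd_\eta''=(3^\nu d)/3$. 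Hence $C_{3^\nu d}>2^r$, so $3^\nu d\in\cG$. The key idea you are missing is that one does not need to \emph{construct} a good~$\nu$ arithmetically; one simply reads it off as the $3$-adic valuation of $x_3^2-x_3x_4+x_4^2$ for an already-existing order-3 element in~$\cO(d)$.
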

  \begin{proof}Suppose that $3^k\|d$. If $k=0$ or~$k=1$, then we can take $\nu=0$, by~(i)
    and~(ii) above. So suppose that $k\ge2$. Then $3^\nu d$ has exactly $r$ distinct
    prime divisors, and $3^\nu d\in\cG$ if and only if $C_{3^\nu d}>2^r$.
    
    Suppose $\xi=x_1+x_2\myj+x_3\phi+x_4\myj\phi\in\cO(d)$ has order~3, that $\xi\ne\myj,\myj^2$, and
    that $3^\nu\|(x_3^2-x_3x_4+x_4^2)$. Then there are integers $\hat x_3$ and $\hat x_4$ so that
  \begin{displaymath}
    3x_1(x_1+1)=(3^\nu d)({\hat x_3}^2-{\hat x_3}{\hat x_4}+{\hat x_4}^2).
  \end{displaymath}
  So $\eta=x_1+(2x_1+1)\myj+\hat x_3\phi_{3^\nu d}+\hat x_4\myj\phi_{3^\nu d}\in\cO(3^\nu d)$ has order~3,
  and ${\hat x_3}^2-{\hat x_3}{\hat x_4}+{\hat x_4}^2$ is not divisible by~3. Hence
  $C_{3^\nu d}>2^r$, and so $3^\nu d\in\cG$.
  \end{proof}

  \begin{remark}To show that Conjecture~2 is true for all~$d$, it is enough to prove it for $d=9d^*$,
      where $d^*\equiv1$ (mod~3). We have verified this for $d^*\le10^9$, by finding $u,v\in\cL$ with
      $3\nmid uv$ and $u-3d^*v=1$.
       By Corollary~\ref{cor:dstar2mod3}, the conjecture is true when $d_\cH\equiv2$ (mod~3) (in the notation of the Introduction), because
     in general $d=3^kd_\cH v$, where $k\ge0$ and $v\equiv1$ (mod~3).
    Other integers~$d\ge1$ for which the conjecture is true are those for which $9\mid d$, $\tilde d=d/3$ is not
    a perfect square, and the smallest non-trivial solution $(x_0,y_0)$ of Pell's equation $X^2-\tilde d\,Y^2=1$
    satisfies $3\nmid y_0$.
    \end{remark}

\end{section}
\bibliographystyle{amsplain}

\vspace{2mm}
 
{\small  
\noindent Donald I. Cartwright
\\School of Mathematics and Statistics,
\\University of Sydney,
\\New South Wales 2006, 
\\Australia 
\vspace{1mm}
\\ \email{donald.cartwright@gmail.com}
\vspace{3mm}
\\\noindent Xavier Roulleau,
\\Aix-Marseille Universit\'e, CNRS, I2M UMR 7373,  
\\13453 Marseille,
\\ France
\vspace{1mm}
\\ \email{Xavier.Roulleau@univ-amu.fr}
\begin{verbatim}
http://www.i2m.univ-amu.fr/perso/xavier.roulleau/Site_Pro/Bienvenue.html
\end{verbatim}
}
\end{document}